\DeclareRobustCommand{\lyxsout}[1]{\ifx\\#1\else\sout{#1}\fi}
\numberwithin{equation}{section}
\numberwithin{figure}{section}
\theoremstyle{plain}
\newtheorem{thm}{\protect\theoremname}[section]
\theoremstyle{remark}
\newtheorem{rem}[thm]{\protect\remarkname}
\theoremstyle{plain}
\newtheorem{conjecture}[thm]{\protect\conjecturename}
\theoremstyle{definition}
\newtheorem{defn}[thm]{\protect\definitionname}
\theoremstyle{plain}
\newtheorem{lem}[thm]{\protect\lemmaname}
\theoremstyle{plain}
\newtheorem{cor}[thm]{\protect\corollaryname}
\theoremstyle{plain}
\newtheorem{prop}[thm]{\protect\propositionname}
\providecommand{\conjecturename}{Conjecture}
\providecommand{\corollaryname}{Corollary}
\providecommand{\definitionname}{Definition}
\providecommand{\lemmaname}{Lemma}
\providecommand{\propositionname}{Proposition}
\providecommand{\remarkname}{Remark}
\providecommand{\theoremname}{Theorem}
\begin{document}
\newcommand{\MA}{{Monge-Amp{\`e}re}\ } 
\title{A Liouville's theorem for some Monge-Amp{\`e}re type equations}
\author{Hao Fang}
\address{14 MacLean Hall, Department of Mathematics, University of Iowa, Iowa
City, IA, 52242}
\email{hao-fang@uiowa.edu}
\author{Biao Ma}
\address{14 MacLean Hall, Department of Mathematics, University of Iowa, Iowa
City, IA, 52242}
\email{biao-ma@uiowa.edu}
\author{Wei Wei}
\address{Department of Mathematics, Nanjing University, Nanjing, P.R.China,
210093}
\email{wei\_wei@nju.edu.cn}
\begin{abstract}
We study a \MA type equation that interpolates the classical $\sigma_{2}$-Yamabe
equation in conformal geometry and the 2-Hessian equation in dimension
4. 
\end{abstract}

\maketitle

\section{Introduction}

In this paper, we study some \MA type equation on $\mathbb{R}^{4}$.
Let $u(x)\in C^{2}(\mathbb{R}^{4})$, $\rho\in\mathbb{R}$, define
the following symmetric 2 tensor
\[
A(\rho,u)=-u_{ij}+\rho u_{i}u_{j}-\frac{\rho}{2}|\nabla u|^{2}\delta_{ij}\in{\rm Sym^{2}(\mathbb{R}^{4})},
\]
where we use $\nabla_{i}u=u_{i}$ and $\nabla_{ij}u=u_{ij}$ to denote
partial derivatives of $u$ with respect to the coordinate system
$\{x^{i}\}$ of $\mathbb{R}^{4}.$ Note that when $\rho=0,$ $A_{\rho}$
is just the Hessian matrix of the function $u$. While $\rho=1,$
up to a multiplying factor, $A_{\rho}$ is the Schouten curvature
of the metric $\exp(2u)g_{E}$. We define $\sigma_{k}(A_{\rho})$
to be the degree $k$ symmetric polynomial of all eigenvalues of $A_{\rho}.$ 

In this paper, we consider fully non-linear problems
\begin{equation}
\sigma_{2}(A(\rho,u(x)))=f(u(x)),\ x\in\Omega\subset\mathbb{R}^{4},\label{eq:PDE}
\end{equation}
and 
\begin{equation}
\sigma_{2}(A(\rho,u(x)))\ge f(u(x)),\ x\in\Omega\subset\mathbb{R}^{4},\label{eq:inequality}
\end{equation}
where $f(u)>0.$

Let
\begin{align*}
\Gamma_{2}^{+} & =\{A\in{\rm Sym^{2}(\mathbb{R}^{4})},\ \ \sigma_{1}(A)>0,\,\sigma_{2}(A)>0\},\\
\Gamma_{2}^{-} & =\{A\in{\rm Sym^{2}(\mathbb{R}^{4})},\ \ \sigma_{1}(A)<0,\,\sigma_{2}(A)>0\}.
\end{align*}
We define positive and negative cones as
\begin{align*}
\mathcal{C}_{2}^{+} & :=\{u\in C^{2}(\Omega),\ A(\rho,u(x))\in\Gamma_{2}^{+},\ x\in\Omega\},\\
\mathcal{C}_{2}^{-} & :=\{u\in C^{2}(\Omega),\ A(\rho,u(x))\in\Gamma_{2}^{-},\ x\in\Omega\}.
\end{align*}
It is known that when $u$ satisfies (\ref{eq:PDE}) or (\ref{eq:inequality})
in a connected domain, then $u$ belongs to either $\mathcal{C}_{2}^{+}$or
$\mathcal{C}_{2}^{-}.$ However, solutions in different cones can
behave very differently. 

In \cite{Fang2018sigma_Y,Fang2020AP}, the first-named author and
the third-named author have studied the $\sigma_{2}$-curvature in
both the positive cone and the negative cone case, where special functions
are chosen for the right hand side of (\ref{eq:PDE}) and (\ref{eq:inequality}).
A major technique in \cite{Fang2018sigma_Y,Fang2020AP} is to construct
monotonic ``quasi-local mass'' type quantities that lead to interesting
geometric conclusions. In this paper, we would like to further explore
this construction, state the most general results, and apply them
to study solutions or super-solutions of related partial differential
equations. In particular, we have established the following:
\begin{thm}
\label{thm:mass-theorem}Consider a bounded domain $\Omega\subset\mathbb{R}^{4}$
with almost $C^{2}$-boundary. Assume $u\in\mathcal{C}_{2}^{+}\cap C^{2}(\overline{\Omega})$
satisfying differential inequality (\ref{eq:inequality}) and $u|_{\partial\Omega}=\tau$.
Define the level set $L_{t}=\{x:u(x)=t\}$ and $\Omega_{t}=\{x:u(x)>t\}$.
Let $F(t)$ be an anti-derivative of $f(t)$, and $|S^{3}|$ be the
area of a unit 3-sphere. Then, the quasi-local mass
\begin{align}
M(t) & :=-2\left(\frac{1}{|S^{3}|}\int_{\Omega_{t}}\sigma_{2}(A(\rho,u(x))dx\right)\left(\frac{1}{|S^{3}|}\int_{\Omega_{t}}{\rm div}(|\nabla u|^{2}\nabla u)dx\right)^{\frac{1}{3}}\label{eq:mass for supersolution in mass-theorem}\\
 & \ +\frac{\rho}{8}\left(\frac{1}{|S^{3}|}\int_{\Omega_{t}}{\rm div}(|\nabla u|^{2}\nabla u)dx\right)^{\frac{4}{3}}-\frac{12}{|S^{3}|}\int_{\Omega_{t}}\left(F(t)-F(u(x))\right)dx\nonumber 
\end{align}
is well defined for $t\in(\tau,\max_{\Omega}u)$ and is monotonically
increasing.

Suppose that $u\in\mathcal{C}_{2}^{+}\cap C^{3}(\Omega)\cap C^{2}(\bar{\Omega})$
and further satisfies the equality (\ref{eq:PDE}). Then, for almost
all $t\in(\tau,\max_{\Omega}u)$,
\begin{align}
M(t)= & \frac{9\rho}{8}\left((\frac{1}{|S^{3}|}\int_{L_{t}}|\nabla u|^{3}dl)^{\frac{4}{3}}+\frac{1}{|S^{3}|}\int_{L_{t}}|\nabla u|^{3}\langle x,\nabla u\rangle dl\right)\label{eq:long mass expression}\\
 & +\frac{1}{|S^{3}|}\int_{L_{t}}H(x)|\nabla u|^{2}\left((\frac{-1}{|S^{3}|}\int_{L_{t}}|\nabla u|^{3}dl)^{\frac{1}{3}}-\langle x,\nabla u\rangle\right)dl,\nonumber 
\end{align}
where $H$ is the mean curvature of $L_{t}$.
\end{thm}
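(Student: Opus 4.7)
My plan rests on three ingredients: an algebraic decomposition of $\sigma_2(A(\rho,u))$, the divergence theorem applied to the classical $\sigma_2$-divergence structure, and a Pohozaev-type identity using the dilation vector $x$. The overarching goal is to convert each bulk integral entering $M(t)$ into a boundary integral on $L_t$, from which both the monotonicity and the explicit formula can be read off.

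First, expanding $(\mathrm{tr}\,A(\rho,u))^2 - |A(\rho,u)|^2$ with $\mathrm{tr}\,A(\rho,u) = -\Delta u - \rho|\nabla u|^2$, I would establish the \emph{purely algebraic} identity
\begin{equation*}
2\sigma_2(A(\rho,u)) = 2\sigma_2(D^2 u) + \rho\,\mathrm{div}\bigl(|\nabla u|^2 \nabla u\bigr),
\end{equation*}
valid for any $u\in C^2$. Integrating over $\Omega_t$ splits the $\sigma_2(A(\rho,u))$ integral as $\tilde A(t) + \tfrac{\rho}{2}B(t)$, where $\tilde A(t) := \tfrac{1}{|S^3|}\int_{\Omega_t}\sigma_2(D^2 u)\,dx$. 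The classical divergence structure $2\sigma_2(D^2u) = \partial_i(u_i\Delta u - u_{ij}u_j)$, combined with the divergence theorem and the level-set identity $\Delta u - u_{NN} = -H|\nabla u|$ on $L_t$ (with $N = \nabla u/|\nabla u|$), then reduces $\tilde A(t)$ to $\tfrac{1}{2|S^3|}\int_{L_t}H|\nabla u|^2\,dl$, while directly by the divergence theorem $B(t) = -\tfrac{1}{|S^3|}\int_{L_t}|\nabla u|^3\,dl$.

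For the equality case formula, I would apply a Pohozaev-type identity: multiply $\sigma_2(A(\rho,u)) = f(u)$ by $\langle x,\nabla u\rangle$ and integrate over $\Omega_t$. The right-hand side integrates by parts via $f(u)\nabla u = \nabla F(u)$ and $\mathrm{div}\,x = 4$, yielding $4|S^3|C(t)$. The left-hand side, after applying the algebraic decomposition and a second integration by parts against $\langle x,\nabla u\rangle$, produces the $x$-weighted boundary integrals $\int_{L_t}H|\nabla u|^2\langle x,\nabla u\rangle\,dl$ and $\int_{L_t}|\nabla u|^3\langle x,\nabla u\rangle\,dl$, plus bulk residuals arising from $\nabla\langle x,\nabla u\rangle = \nabla u + D^2u\cdot x$. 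These residuals must be carefully unpacked---using the divergence-free property of the Newton tensor of $D^2u$ and the already-divergence form of the $\rho$-correction---so that, after multiplication by the appropriate power $B(t)^{1/3}$ dictated by the structure of $M(t)$, they combine with the ``bare'' boundary integrals of the previous step to reproduce (\ref{eq:long mass expression}).

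For the monotonicity assertion, I would differentiate $M(t)$ via the co-area formula, obtaining $A'(t) = -\frac{1}{|S^3|}\int_{L_t}\sigma_2(A(\rho,u))/|\nabla u|\,dl$, $B'(t) = -\frac{1}{|S^3|}\int_{L_t}\mathrm{div}(|\nabla u|^2\nabla u)/|\nabla u|\,dl$, and $C'(t) = f(t)|\Omega_t|/|S^3|$. The supersolution inequality $\sigma_2(A(\rho,u))\ge f(u)=f(t)$ on $L_t$ provides a lower bound on $-A'(t)$, while the positive-cone condition $A(\rho,u)\in\Gamma_2^+$ yields the Newton--Maclaurin inequality $\sigma_1^2\ge\tfrac{8}{3}\sigma_2$; a H\"older/Cauchy--Schwarz argument on $L_t$ should then assemble the boundary expression for $M'(t)$ into a non-negative quantity. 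The main obstacle will be the bookkeeping in the Pohozaev step: confirming that the bulk residuals, together with the $B(t)^{1/3}$ factor, produce exactly the coefficient $\tfrac{9\rho}{8}$ in (\ref{eq:long mass expression}) with the correct signs, since the constants in $M(t)$ (the exponent $1/3$, the factors $-12$ and $\rho/8$) are delicately tuned to the dimension $n=4$ and the scaling of $A(\rho,u)$, so any misalignment would propagate through the entire identity.
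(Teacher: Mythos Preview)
Your plan for the boundary expression \eqref{eq:long mass expression} is essentially the paper's approach: the algebraic splitting $2\sigma_2(A(\rho,u))=2\sigma_2(D^2u)+\rho\,\mathrm{div}(|\nabla u|^2\nabla u)$, the divergence theorem, and a Pohozaev identity obtained by testing against $\langle x,\nabla u\rangle$ are exactly the ingredients the authors use (their Lemma~\ref{lem:prep} and Corollary~\ref{cor:Pohozaev for K=00003D1}, with the Pohozaev computation carried out in the Appendix). So that half of your outline is on track.

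The monotonicity argument, however, has two concrete gaps. First, the Newton--Maclaurin inequality you invoke, $\sigma_1^2\ge\tfrac{8}{3}\sigma_2$ for the full $4\times4$ matrix $A(\rho,u)$, is not what drives the proof. What is needed is the \emph{factorization} $\sigma_2(A(\rho,u))\le \sigma_1(\tilde A)\bigl(\tfrac13 H|\nabla u|-u_{\nu\nu}\bigr)$ coming from the tangential/normal block decomposition of $A$ along $L_t$, with Newton--Maclaurin applied to the $3\times3$ tangential block $\tilde A$ (the paper's Lemma~\ref{lem:divergence }). This splitting into a product of two factors is precisely what allows the subsequent H\"older/AM--GM step to separate $N'(t)$, $Q'(t)$, and the $\sigma_1(\tilde A)$ term; the scalar inequality $\sigma_1^2\ge\tfrac{8}{3}\sigma_2$ does not yield such a product structure.

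Second, and more seriously, you are missing the \emph{isoperimetric inequality}. Your $C'(t)=f(t)\,|\Omega_t|/|S^3|$ carries the volume $|\Omega_t|$, while every other derivative is an integral over the hypersurface $L_t$. No amount of H\"older or Cauchy--Schwarz on $L_t$ alone can relate $|\Omega_t|$ to surface integrals; the sharp isoperimetric inequality $|L_t|^{4}\ge 4^{3}\,|S^3|\,|\Omega_t|^{3}$ in $\mathbb{R}^4$ is indispensable here (see the paper's inequality~\eqref{eq:107}). It is also what gives the rigidity statement: equality in isoperimetric forces $L_t$ to be a round sphere, which is how one concludes that $M(t)\equiv\mathrm{const}$ implies radial symmetry. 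Without this step your argument for $M'(t)\ge0$ cannot close.
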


$M(t)$ in (\ref{eq:mass for supersolution in mass-theorem}) works
for super-solutions satisfying (\ref{eq:inequality}) and the monotonicity
is independent of $\rho$. It covers different types of mass quantities
that appeared in \cite{Fang2018sigma_Y,Fang2020AP} and can be applied
to 2-Hessian equations. For solutions to (\ref{eq:PDE system}), the
expression of $M(t)$ in (\ref{eq:long mass expression}) solely depends
on the geometry of the level set hypersurfaces and the gradient of
$u$. It is a major improvement compared to \cite{Fang2018sigma_Y,Fang2020AP}.
(\ref{eq:long mass expression}) is achieved by establishing a Pohozaev
type identity which can be stated in a more general form. See the
appendix for details.

An immediate application of (\ref{eq:long mass expression}) is the
following:
\begin{thm}
\label{thm:Dirichlet}Let $\Omega$ be a round ball in $\mathbb{R}^{4}$.
Assume that for $\rho\leq0,$ $u\in\mathcal{C}_{2}^{+}$ satisfies
(\ref{eq:PDE}), $f(u)$ is integrable, and $u|_{\partial\Omega}=\tau$.
Then $u$ is rotationally symmetric in $\Omega.$
\end{thm}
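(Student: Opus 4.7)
The plan is to exploit the monotonicity of the quasi-local mass $M(t)$ granted by Theorem~\ref{thm:mass-theorem}, together with the sharp formula \eqref{eq:long mass expression}, by comparing the two endpoint values $\lim_{t\to\tau^+}M(t)$ and $\lim_{t\to(\max_\Omega u)^-}M(t)$. Monotonicity plus a boundary inequality will force $M\equiv 0$, after which the equality cases of the inequalities used produce enough rigidity to conclude rotational symmetry.

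First I would show $\lim_{t\to\max_\Omega u}M(t)=0$ directly from \eqref{eq:mass for supersolution in mass-theorem}: each of the three integrands ($\sigma_2(A)=f(u)$, $\mathrm{div}(|\nabla u|^2\nabla u)$, and $F(t)-F(u)$) is bounded on $\bar{\Omega}$ while $|\Omega_t|\to 0$ and $|L_t|\to 0$. Next, place $\Omega=B_R$ at the origin and plug the Dirichlet data into \eqref{eq:long mass expression} at $t=\tau$. On $\partial B_R$ one has $\nabla u=-g\,x/R$ with $g:=|\nabla u|$, hence $\langle x,\nabla u\rangle=-Rg$, and $H=3/R$. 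Writing $m_k:=\frac{1}{|S^3|R^3}\int_{\partial B_R}g^k\,dl$ for the normalized moments of $g$ with respect to the uniform probability measure on $\partial B_R$, \eqref{eq:long mass expression} reduces to
\[
M(\tau)=\frac{9\rho R^4}{8}\bigl(m_3^{4/3}-m_4\bigr)+3R^3\bigl(m_3-m_3^{1/3}m_2\bigr).
\]
Jensen's inequality (convexity of $x\mapsto x^{4/3}$) gives $m_3^{4/3}\le m_4$; Hölder's inequality with exponents $3/2$ and $3$ gives $m_2\le m_3^{2/3}$, so $m_3^{1/3}m_2\le m_3$. Under the hypothesis $\rho\le 0$ both brackets contribute non-negatively, so $M(\tau)\ge 0$. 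Combined with monotonicity and $M(\max u)=0$, this forces $M(\tau)=0$ and in fact $M\equiv 0$ on $[\tau,\max_\Omega u]$. Equality in the Hölder step (which is active already at $\rho=0$) forces $g=|\nabla u|$ to be constant on $\partial B_R$.

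The remaining step, promoting the identity $M\equiv 0$ and the overdetermined boundary data to full rotational symmetry of $u$, is the main obstacle. The cleanest route is to revisit the integral inequality underlying the monotonicity proof in Theorem~\ref{thm:mass-theorem}: its equality case should identify each level set $L_t$ with an umbilical hypersurface, hence a round $3$-sphere in $\mathbb{R}^4$. A continuity/foliation argument, using that $L_\tau=\partial B_R$ is centered at the origin and that the family $\{L_t\}$ collapses to a single interior critical point, then forces concentricity of the whole family. Alternatively, one can take the overdetermined conditions $u\equiv\tau$ and $|\nabla u|\equiv\text{const}$ on $\partial B_R$ as input to a Serrin-type moving-plane argument for \eqref{eq:PDE}, which is legitimately elliptic in $\mathcal{C}_2^+$ since $A(\rho,u)\in\Gamma_2^+$. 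Either approach should close the proof; I expect the former to match the paper's philosophy of extracting rigidity from the monotone mass, while the latter is a robust fallback.
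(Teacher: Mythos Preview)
Your proposal is correct and follows essentially the same route as the paper: evaluate $M(\tau)$ on the round boundary via the level-set formula \eqref{eq:long mass expression}, use H\"older/Jensen on the moments of $|\nabla u|$ to obtain $M(\tau)\ge 0$ when $\rho\le 0$, compare with $M(\max_\Omega u)=0$, and invoke monotonicity to get $M\equiv 0$. Your hesitation about the final rigidity step is unnecessary---the paper simply cites the rigidity clause already built into the mass theorem (``if $M(t)$ is constant, then $u$ is radial''), so no separate concentricity argument or moving-plane fallback is required.
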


We point out that our method extends that of Lions in \cite{lionst1981two}
for the Laplacian in dimension 2 and we have not assumed the regularity
of function $f$ of (\ref{eq:PDE}). When $f$ has higher regularity,
regardless of the sign of $\rho,$ it is possible to establish the
result of Theorem \ref{thm:Dirichlet} using the moving plane method.
The approach here does not rely on the regularity of $f$; however,
it works only for $\rho$ being non-positive.

\textcolor{black}{As a special case of (\ref{eq:PDE}), consider the
following PDE:}
\begin{align}
\begin{cases}
\sigma_{2}(A(\rho,u))=\exp(\beta u)p(u), & x\in\mathbb{R}^{4}\\
\sigma_{1}(A(\rho,u))>0,
\end{cases}\label{eq:PDE system}
\end{align}
where $\beta>0$ and $p$ is a smooth positive polynomial-like function
satisfying (\ref{eq:growth condition of P(u)}) and (\ref{eq:growth condition 2}).
In particular, $p$ may be chosen as a positive polynomial or simply
a positive number. Noting that when $\rho=1,$ $p=\frac{3}{2}$ and
$\beta=4$, (\ref{eq:PDE system}) is the so-called $\sigma_{2}$-Yamabe
problem for the round sphere, which is well studied in conformal geometry.
It is also worth pointing out that there is a natural transformation
of a problem in the negative cone into a problem in the positive cone.
See Section 2 for more details. Especially, when $\rho=-1,$ $p=\frac{3}{2}$
and $\beta=-4,$ (\ref{eq:PDE system}) is equivalent to the $\sigma_{2}$-Yamabe
problem for the hyperbolic 4-space, which has been studied in \cite{Fang2020AP}.
When $\rho=0,$ (\ref{eq:PDE system}) represents the 2-Hessian equation
of critical degree. 

We use the mass monotonicity formula (\ref{eq:long mass expression})
together with careful analysis of the geometry of level sets to establish
a Liouville type theorem for equation (\ref{eq:PDE system}). We now
state our main theorem as follows:
\begin{thm}
\label{thm:Maintheorem} For $\rho>0$, if $u\in\mathcal{C}_{2}^{+}$
is a $C^{2}$-solution of (\ref{eq:PDE system}), then $u$ is rotationally
symmetric with respect to a point.
\end{thm}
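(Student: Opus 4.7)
The plan is to apply Theorem \ref{thm:mass-theorem} to the super-level regions $\Omega_{t}=\{u>t\}$ of $u$ on $\mathbb{R}^{4}$, pinch the quasi-local mass $M(t)$ to zero throughout its entire range, and extract from the equality case the conclusion that every level set is a round sphere on which $|\nabla u|$ is constant. Rotational symmetry of $u$ then follows by nesting of the level sets.

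First I would verify that the hypotheses of Theorem \ref{thm:mass-theorem} are available in the present entire-space setting. Since $\rho>0$ and $u\in\mathcal{C}_{2}^{+}$, the first sign condition gives $\sigma_{1}(A(\rho,u))=-\Delta u-\rho|\nabla u|^{2}>0$, so $u$ is strictly superharmonic on $\mathbb{R}^{4}$. Combining this with $\sigma_{2}(A)=e^{\beta u}p(u)>0$ and the growth hypotheses on $p$, I would show that $u$ is bounded above, attains its maximum at a single interior point (translated to the origin), and that $\Omega_{t}$ is bounded with almost $C^{2}$ boundary for a.e.\ noncritical $t<u_{\max}$. Applying Theorem \ref{thm:mass-theorem} on $\Omega_{t_{0}}$ for decreasing values of $t_{0}$ yields that $M(t)$ is well defined and non-decreasing on $(\inf u,u_{\max})$. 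As $t\uparrow u_{\max}$ the region $\Omega_{t}$ collapses to the origin and every integral in (\ref{eq:mass for supersolution in mass-theorem}) tends to zero; hence $\lim_{t\uparrow u_{\max}}M(t)=0$ and, by monotonicity, $M(t)\le 0$ throughout.

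Next I would establish the matching lower bound $M(t)\ge 0$ from the geometric identity (\ref{eq:long mass expression}). With the origin at the maximum of $u$, one has $\langle x,\nabla u\rangle\le 0$ on every $L_{t}$. I would combine H\"older on the $|\nabla u|^{3}$-integrals with Minkowski-type comparisons between $\int_{L_{t}}H|\nabla u|^{2}dl$ and $\int_{L_{t}}|\nabla u|^{3}dl$ (in the spirit of the classical Minkowski formula for closed hypersurfaces in $\mathbb{R}^{4}$, with $\nabla u$ playing the role of a position-type vector) to force the right-hand side of (\ref{eq:long mass expression}) to be non-negative precisely when $\rho>0$. Equality in these inequalities should hold exactly when $L_{t}$ is a geodesic sphere centered at the origin and $|\nabla u|$ is constant on $L_{t}$. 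Combined with the upper bound this gives $M(t)\equiv 0$, which triggers the equality cases for a.e.\ $t$; by continuity of the level-set geometry, every $L_{t}$ is a concentric sphere about the origin with $|\nabla u|$ constant on it, so $u$ is rotationally symmetric.

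The central obstacle is the sharp lower bound $M(t)\ge 0$ in the previous step: the expression (\ref{eq:long mass expression}) is not manifestly non-negative, the $\rho$-terms and the $H|\nabla u|^{2}$-terms carry different structural signs, and they are coupled through the common factor $(-|S^{3}|^{-1}\int_{L_{t}}|\nabla u|^{3}dl)^{1/3}$. Identifying the correct comparison inequality that uses both the sign $\rho>0$ and the fact that the origin is a critical point of $u$, while simultaneously producing spherical rigidity in its equality case, is the technical heart of the argument. A secondary but unavoidable task is to extract from (\ref{eq:PDE system}) the asymptotic decay of $u$ at infinity needed to guarantee boundedness and connectedness of the super-level sets for the full range of $t$.
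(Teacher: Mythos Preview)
Your overall strategy---pin $M(t)$ to zero from both ends and invoke the rigidity in Theorem~\ref{thm:mass-theorem}---is the right one, and the upper bound $M(t)\le 0$ coming from $M(t)\to 0$ as $t\uparrow u_{\max}$ together with monotonicity is correct and is exactly what the paper does. The gap is in your lower bound.

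First, the claim that $\langle x,\nabla u\rangle\le 0$ on every $L_{t}$ once the origin is placed at a maximum of $u$ is unjustified: this is equivalent to saying that the super-level sets are star-shaped about that point, which is essentially the symmetry you are trying to prove. Superharmonicity only gives $u(x)<u(0)$, not control on the direction of $\nabla u$. Second, and more seriously, a direct inequality $M(t)\ge 0$ from the level-set expression (\ref{eq:long mass expression}) via H\"older/Minkowski does not go through for $\rho>0$. The paper itself carries out exactly this kind of direct comparison in Section~\ref{sec:Dirichlet-problem-for} (the Dirichlet problem on a ball) and the inequalities close only when $\rho\le 0$; the $\rho$-term and the $H$-term have the wrong relative signs to yield $M\ge 0$ for positive $\rho$ without further input.

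What the paper actually does is prove $\lim_{i\to\infty}M(t_{i})=0$ along a sequence $t_{i}\to-\infty$, and this is where all the hard analysis lives: a Chang--Gursky--Yang integral bound $\int_{\mathbb{R}^{4}}\sigma_{2}(A(\rho,u))<\infty$ (Section~\ref{sec:Bounded-Integral}), Chen's local $C^{1}$--$C^{2}$ estimates and a Guan--Wang $\epsilon$-regularity argument giving $|\nabla u|\lesssim |x|^{-1}$, $|\nabla^{2}u|\lesssim |x|^{-2}$ (Section~\ref{sec:local-estimates}), and then a blow-down showing that the rescaled level sets $\tilde L_{i}=L_{t_{i}}/\underline r(t_{i})$ converge in $C^{1,\gamma}$ to the unit sphere with $u_{i}\to\alpha\ln|x|$ (Section~\ref{sec:Asymptotic-behavior}). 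With this convergence in hand one evaluates each term of (\ref{eq:long mass expression}) on $\tilde L_{i}$ and shows it tends to zero; the $H$-integral is controlled not by a sign but by the uniform bound $\int_{L_{t}}H|\nabla u|^{2}=2N(t)+\rho|Q(t)|^{3}$ together with $Q(t_{i})-\langle y,\nabla u_{i}\rangle\to 0$ uniformly. You flagged the asymptotic decay as a ``secondary'' task, but in the paper's approach it is the entire engine of the argument; without it there is no way to close the lower bound when $\rho>0$.
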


\begin{rem}
\textcolor{black}{By rescaling, we may assume $\beta=4$. See Section
2, Remark \ref{rem:+/-cone}. Also, if $\rho\geq2$, there is no entire
solution to (\ref{eq:PDE system}) on the whole space $\mathbb{R}^{4}$.
See Corollary \ref{cor:If rho>=00003D2}. }When $p$ is a positive
constant and $\rho\in(0,2),$ rotationally symmetric solutions of
(\ref{eq:PDE system}) exist. See Section \ref{sec:Radial-solutions}
for details. 
\end{rem}

Our study and results are influenced by the studies of $\sigma_{k}$-curvature
in conformal geometry and $k$-Hessian equations in the theory of
non-linear differential equations, which have been developed with
intertwining themes in the past few decades. 

In the context of conformal geometry, $\sigma_{k}$-curvature was
first studied by Viaclovsky \cite{V1} as a natural extension of the
scalar curvature. Let $(M^{n},g)$ be a Riemannian manifold and 
\[
A_{g}:=\frac{1}{n-2}\left(\operatorname{Ric}_{g}-\frac{R_{g}}{2(n-1)}\cdot g\right)
\]
be its Schouten tensor, where ${\rm Ric}_{g}$ is the Ricci curvature,
and $R_{g}$ is the scalar curvature. $\sigma_{k}$-curvature, denoted
as $\sigma_{k}(g^{-1}A_{g})$, is the degree $k$ elementary symmetric
polynomial of the eigenvalues of $g^{-1}A_{g}$. $\sigma_{k}$-Yamabe
problems are analog to the classic Yamabe problem in which we seek
a conformal metric $g_{u}=\exp(2u)g$ such that
\begin{align}
\sigma_{k}(g_{u}^{-1}A_{g_{u}}) & =const,\label{eq:k-Yamabe problem}\\
\sigma_{1}(g_{u}^{-1}A_{g_{u}}), & \cdots,\sigma_{k}(g_{u}^{-1}A_{g_{u}})>0.\nonumber 
\end{align}
Equation (\ref{eq:k-Yamabe problem}) has significant geometric applications.
See \cite{V1,V4,V2,V3,CGY2chang2002priori,CGY3chang2003entire,GW2,G0gonzalez2004singular,CHYChang2005ClassificationOS,G1Gonz05,wang2006priori,G2Gonz06cv,STWsheng2007yamabe,Li2014ACT,Fang2018sigma_Y}
for some incomplete references in this field. In particular, the work
of Chang-Gursky-Yang \cite{CGY1chang2002equation,CGY4chang2003conformally}
explores properties of $\sigma_{2}$-curvature in closed 4-manifolds
and gives a conformal characterization of 4-spheres. The majority
of geometric applications in this direction require a positive cone
condition. In \cite{Fang2020AP}, a special negative cone case is
studied. In particular, the standard hyperbolic 4-space can be characterized
as the sharp case of the quasi-local mass inequality.

When $n\geq3,k\geq1$, Liouville type theorems on $\mathbb{R}^{n}$
for the $\sigma_{k}$-Yamabe equations have been established. A powerful
analytic device is the moving plane/sphere method. Let $g_{u}=u^{\frac{4}{n-2}}|dx|^{2}$
be a solution to (\ref{eq:k-Yamabe problem}). For $k=1$, it is proved
in the celebrated paper \cite{CGS89} by Caffarelli-Gidas-Spruck that
$u$ is rotationally symmetric. See also Li-Zhang \cite{Li-Zhang03}
for an alternative proof. Under the assumption that $u(x)=O(|x|^{2-n})$,
Viaclovsky in \cite{V4} proved the Liouville theorem for $k\geq2$.
Li-Li in \cite{Li-Li03,Li-Li05} demonstrated the Liouville type theorems
for a class of conformally invariant equations including $\sigma_{k}$-Yamabe
equation (\ref{eq:k-Yamabe problem}). 

\textcolor{black}{If $k=2$ and $n\geq4$, Chang-Gursky-Yang \cite{CGY2chang2002priori,CGY3chang2003entire}
classified the solution to (\ref{eq:k-Yamabe problem}) without using
the moving plane/sphere method. The classification in \cite{CGY2chang2002priori,CGY3chang2003entire}
requires an additional integrability condition for $n\geq6$. However,
the integrability condition is not required for $n=4,5$. }

For $n=2,k=1$, instead of (\ref{eq:k-Yamabe problem}), one considers
the problem of prescribing constant Gaussian curvature with the equation:
\begin{equation}
-\Delta u=e^{u}\text{ on }\mathbb{R}^{2}.\label{eq:prescribing Gaussian}
\end{equation}
Chen-Li in \cite{chen1991classification} proved that under the integrability
assumption$\int_{\mathbb{R}^{2}}e^{u}dx<\infty,$ the only $C^{2}$
solution to (\ref{eq:prescribing Gaussian}) are rotationally symmetric.
In a recent paper \cite{li2020liouville}, Li-Lu-Lu extended Chen-Li's
results to M\"{o}bius invariant equations in $\mathbb{R}^{2}$ without
the integrability assumption similar to the case in \cite{CGY2chang2002priori,CGY3chang2003entire}.

$k$-Hessian equations are intermediate cases between Laplacian equations
and \MA equations, with many applications in geometric analysis and
other fields. Consider the positive solution to the equation 
\begin{align}
\sigma_{k}(-\nabla^{2}u) & =u^{p};\ \sigma_{1}(-\nabla^{2}u),\cdots,\sigma_{k}(-\nabla^{2}u)>0.\label{eq:k-hessian exponents}
\end{align}
Tso in \cite{tso1990remarks} indicated that $k^{*}=\frac{(n+2)k}{n-2k}$
for $k<\frac{n}{2}$ is the critical exponent and proved a non-existence
theorem for supercritical exponent $p>k^{*}$ on star-shaped domains.
The critical exponents are related to the Sobolev inequality of $k$-Hessian
equations, which was studied by Wang in \cite{wang94}.  In the dimensional
critical case $n=2k$, Tian-Wang in \cite{tian-wang05} proved a Moser-Trudinger
type inequality. Notably, sharp constants of the Sobolev and Moser-Trudinger
type inequalities in \cite{wang94,tian-wang05} are attained for entire
functions on $\mathbb{R}^{n}$. See the lecture notes of Wang \cite{wang2009k}.
Compared to classical Sobolev inequalities, Liouville type theorems
for $k$-Hessian equations are expected.  When $k=1$, it is a classical
result of Gidas-Spruck \cite{Gidas-spruck81} that (\ref{eq:k-hessian exponents})
only admits zero solutions when $p<k^{*}$. Using the nonlinear potential
theory, Phuc-Verbitsky \cite{phuc2008quasilinear} extended Gidas-Spruck's
result for $p\le\frac{nk}{n-2k}<k^{*}$. See also an alternative proof
by Ou \cite{Ou2010} using methods developed by Gonz$\text{{á}}$lez
\cite{G1Gonz05,G2Gonz06cv}. For $\frac{nk}{n-2k}<p\le k^{*}$, the
classification of positive solutions to $\sigma_{k}(-\nabla^{2}u)=u{}^{p}$
in $\mathbb{R}^{n}$ remains open. The classification of radial solutions
in the entire space can be found in \cite{Miyamoto}\cite{wang2019critical}
and references therein.

For constant $k$-Hessian equations, Liouville type theorems (also
called Bernstein type theorems in the literature) have been established
in various settings. Consider
\begin{equation}
\sigma_{k}(-\nabla^{2}u)=1;\ \sigma_{1}(-\nabla^{2}u),\cdots,\sigma_{k}(-\nabla^{2}u)>0.\label{eq:k-hessian const}
\end{equation}
One may expect that (\ref{eq:k-hessian const}) only admits solutions
of quadratic functions and it has been confirmed in many cases: the
classic J$\ddot{o}$rgens-Calabi-Pogorelov theorem \cite{jorgens1954losungen,calabi1958improper,pogorelov1972improper}
for \MA equation; \cite{BCGJ2003} on convex solutions of (\ref{eq:k-hessian const});
$n=3,k=2$ with a quadratic growth condition by Warren-Yuan \cite{warren2009hessian};
$k=2$ by Chang-Yuan \cite{changyuan10} and Shankar-Yuan \cite{shankar2021rigidity}
with semi-convexity assumptions; Li-Ren-Wang \cite{Li-ren-wangjfa16}
with $k+1$-convexity and quadratic growth conditions; $k=2$ by Chen-Xiang
in \cite{chen-xiangjde19} assuming a quadratic growth condition and
a lower bound of $\sigma_{3}\left(-\nabla^{2}u\right)$. The Liouville
type theorem to (\ref{eq:k-hessian const}) in general does not hold
without additional convexity or growth conditions, since there are
some non-polynomial-like solutions to (\ref{eq:k-hessian const})
demonstrated by Warren \cite{warren2016nonpolynomial} and Li \cite{li2021non}.

Though leading terms of (\ref{eq:k-Yamabe problem}), (\ref{eq:k-hessian exponents}),
and (\ref{eq:k-hessian const}) look similar, each of these equations
has unique features. In the case of $\sigma_{k}$-Yamabe equation
(\ref{eq:k-Yamabe problem}) on $\mathbb{R}^{n}$, the eigenvalues
of the Schouten tensor are invariant under the M\"{o}bius group.
A special element in the M\"{o}bius group is the inversion map, which
reduces the study of asymptotic behavior of solutions at infinity
to that of solutions with an isolated point singularity. Once an estimate
near the point singularity is established, by the inversion map, we
will obtain the information of solutions near infinity to initiate
the moving plane/sphere method. See \cite{CGS89,Korevaar1999RefinedAF,Li-Li03,Li-Li05,HLTHan2010AsymptoticBO,han2021asymptotic}
for details. On the other hand, the M\"{o}bius group, in general,
does not preserve the $k$-Hessian equations, which causes some difficulties
to understand the asymptotic behavior of corresponding solutions.
When $\rho\not=1$ in equation (\ref{eq:PDE system}), the M\"{o}bius
invariance is also not available as the $k$-Hessian equations. It
is worth noting that the affine invariance of \MA equations simplifies
the proof of the corresponding Liouville theorems, c.f. \cite{Caffarelli2003AnET}.

Another feature differing $\sigma_{k}$-Yamabe equations from $k$-Hessian
equations is the formula of local $C^{1},C^{2}$ estimates. Instead
of dealing with the eigenvalues of $\nabla^{2}u$, with the help of
some lower order terms, one magically obtains certain a priori $C^{1}$
and $C^{2}$ estimates independent of the lower bound of $u$. Such
estimates have been established by Chen \cite{Chen2005LocalEF} for
a class of fully nonlinear equations including $\sigma_{k}$-Yamabe
equations and our equation (\ref{eq:PDE system}).  See Section \ref{sec:local-estimates}
for details. See also Guan-Wang \cite{guanWan2003}, Wang \cite{wang2006priori},
Li \cite{li2009local} for gradient estimates of equations similar
to $\sigma_{k}$-Yamabe equations.

\textcolor{black}{We briefly explain our approach to prove Theorem
\ref{thm:Maintheorem}. Our method needs $3$ crucial estimates: a
total integral upper bound, a local $C^{1}$ and $C^{2}$ estimate,
and a decay estimate from an $\epsilon$-regularity type argument.}\textcolor{red}{{}
}\textcolor{black}{These estimates are well known for experts for
$\sigma_{2}$-Yamabe equations and one of our key observations is
that when $\rho$ is positive, they are applicable to equation (\ref{eq:PDE system}).
For the total integral bound, we follow methods from Chang-Gursky-Yang
\cite{CGY3chang2003entire}; for the local $C^{1}$ and $C^{2}$ estimate,
we use theorems of Chen \cite{Chen2005LocalEF} directly.}\textcolor{red}{{}
}\textcolor{black}{Both estimates essentially depend on the positivity
of $\rho$.}\textcolor{red}{{} }\textcolor{black}{Combining the first
$2$ estimates, we generalize an $\epsilon$-regularity estimate by
Guan-Wang \cite{guanWan2003} for some general function $f(u)$ in
the form of (\ref{eq:PDE system}), which leads to the decay estimate.
See Corollary \ref{cor:Guan-Wang} for details.}

\textcolor{black}{The main technical contribution of this paper is
to combine the above mentioned estimates with our mass formula in
Theorem \ref{thm:mass-theorem} to establish the uniqueness and the
symmetry of solutions to (\ref{eq:PDE system}). In fact, with the
above mentioned estimates, and} some techniques in Li \cite{li2009local}
and Li-Nugyen \cite{li2014harnack}, we can prove that \textcolor{black}{the
level set after proper rescaling converges to sphere in Gromov-Haus}dorff
sense. We then evaluate the corresponding mass quantity for a sequence
of enlarging level sets and shows that it has to be constant, which
implies the rotational symmetry. Overall, our mass inequality gives
a direct way to compare geometric and analytical information at infinity
and at local maximal points of $u$, which leads to our theorem.

Our approach has several advantages. First, only information of a
sequence of level sets near infinity is needed to obtain the desired
comparison result. See details in sections \ref{sec:Asymptotic-behavior}
and \ref{sec:Proof-of-main theorem}. Second, the mass quantity (\ref{eq:long mass expression})
only involves boundary integrals and is invariant under rescaling.
Note that we do not use the M\"{o}bius  transform, which is essential
in earlier works of Yamabe type equations in conformal geometry. Third,
our approach is center-point free since the symmetry of solutions
can be deduced from the rigidity of the isoperimetric inequality.
Last, the mass quantity works for Hessian equations, which will be
the focus of our future work.

Naturally, one of our next goals is to consider the dimension 4 case
of the following conjecture for $k$-Hessian equations, which is well
known among experts in the field:
\begin{conjecture}
\label{conj: 2hessian liouville}\textcolor{black}{For $\rho=0$ and
$n=2k$, the equation (\ref{eq:PDE system}), re-written as
\[
\begin{cases}
\sigma_{k}(-\nabla^{2}u)=e^{nu},\\
\sigma_{1}(-\nabla^{2}u),\cdots,\sigma_{k}(-\nabla^{2}u)>0,
\end{cases}
\]
 has only rotationally symmetric solutions on $\mathbb{R}^{n}$.}
\end{conjecture}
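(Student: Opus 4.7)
The plan is to address the dimension-$4$ case of the conjecture, namely $\sigma_2(-\nabla^2 u) = e^{4u}$ on $\mathbb{R}^4$ with $\sigma_1(-\nabla^2 u) > 0$, by specializing the mass monotonicity of Theorem~\ref{thm:mass-theorem} to $\rho = 0$, $f(u) = e^{4u}$, and $F(u) = \tfrac{1}{4}e^{4u}$. Since $A(0,u) = -\nabla^2 u$, the $\rho$-weighted middle term of (\ref{eq:mass for supersolution in mass-theorem}) and the first summand of (\ref{eq:long mass expression}) both vanish, leaving a purely Hessian mass whose solution-side expression on level sets involves only the mean curvature $H$, the gradient $|\nabla u|$, and the position vector $\langle x, \nabla u\rangle$. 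The equation is invariant under the rescaling $u_\lambda(x) = u(\lambda x) + \log\lambda$, setting up a natural blow-down framework on which to apply this mass.

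Mirroring the strategy used for Theorem~\ref{thm:Maintheorem}, I would organize the argument in three stages. First, establish total integrability: $\int_{\mathbb{R}^4} e^{4u}\,dx < \infty$ and $\int_{\mathbb{R}^4} \sigma_2(-\nabla^2 u)\,dx < \infty$, together with quantitative decay of $\int_{L_t} |\nabla u|^3\,dl$ as $t \to \inf u$. Second, establish local $C^1$ and $C^2$ estimates for $u$ that are independent of $\inf u$. Third, run an $\epsilon$-regularity argument producing quantitative decay at infinity, then carry out an asymptotic analysis along the lines of Section~\ref{sec:Asymptotic-behavior} to show that after rescaling, the level sets $L_{t_j}$ for $t_j \to \inf u$ converge in Gromov--Hausdorff sense to round spheres concentric at a single point. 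Finally, evaluate $M(t)$ along this sequence, compare with its value near $\sup u$, and use monotonicity to conclude $M(t) \equiv \text{const}$; the equality case of the isoperimetric-type inequality underlying the mass formula then forces each level set to be a round sphere, giving radial symmetry.

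The main obstacle is that the proof of Theorem~\ref{thm:Maintheorem} uses the positivity of $\rho$ essentially at both the integrability step (the Chang--Gursky--Yang Pohozaev argument in \cite{CGY3chang2003entire} relies on conformal invariance, i.e., $\rho = 1$) and the local estimate step (the Chen estimates in \cite{Chen2005LocalEF}). For $\rho = 0$, replacement tools must come from the intrinsic $k$-Hessian structure: the divergence-free Newton tensor $T_{k-1}(-\nabla^2 u)$ yields Pohozaev identities on large balls, while the $\Gamma_2^+$-cone condition together with $\Delta u < 0$ supplies ABP-type control on $u$. A plausible route to the integrability bound is to couple such a Pohozaev identity at radius $R \to \infty$ with the mass monotonicity itself: if $M(t)$ remains bounded along level sets escaping to infinity, the terms in (\ref{eq:mass for supersolution in mass-theorem}) must be separately controlled, forcing the desired integrability. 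Local estimates in the Hessian regime can draw on Chou--Wang and Guan--Wang techniques adapted to the $\sigma_1 > 0$ condition. For the general case $n = 2k$ with $k \geq 3$, one first needs to extend Theorem~\ref{thm:mass-theorem} itself to higher dimensions via higher Newton tensors following the Pohozaev derivation sketched in the appendix; this appears feasible but technically substantial.
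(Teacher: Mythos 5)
This statement is labeled a \emph{Conjecture} in the paper, and the paper provides no proof: the authors explicitly note that Theorem~\ref{thm:Maintheorem} ``indicates a possible approach \dots{} though several of our key estimates fail when $\rho=0$.'' Your sketch tracks exactly the roadmap the authors gesture at (specialize the mass in Theorem~\ref{thm:mass-theorem} to $\rho=0$, establish integrability, local $C^1,C^2$ estimates, $\epsilon$-regularity, Gromov--Hausdorff convergence of rescaled level sets, then evaluate $M(t)$ and use rigidity), and you are correct in pinpointing where $\rho>0$ enters essentially. But a sketch that explicitly defers the hard parts to ``plausible routes'' and ``appears feasible'' is not a proof, and the gaps you name are not closed by the proposed substitutes. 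Concretely: (i)~Theorem~\ref{thm:Integral finit} gives the bound $C/\rho^{2}$, which blows up as $\rho\to 0$; for the pure Hessian equation $\sigma_2(-\nabla^2 u)=e^{4u}$ there is currently no analogue of the Chang--Gursky--Yang integral bound, and the idea of deducing integrability from boundedness of $M(t)$ is circular, since $M(t)$ is itself built from $\int_{\Omega_t}\sigma_2$ and $\int_{\Omega_t}F(u)$ whose finiteness is precisely what needs to be shown. (ii)~Chen's local estimate (Theorem~\ref{thm:Gradient and second order estimate}) reads $\rho|\nabla^2 u|+\rho^2|\nabla u|^2\le C(\cdot)$, which is vacuous at $\rho=0$; interior $C^2$ estimates for $\sigma_k$-Hessian equations that depend only on a one-sided $C^0$ bound are a genuinely open problem, and citing Chou--Wang or Guan--Wang ``adapted to the $\sigma_1>0$ condition'' is not an argument. (iii)~The blow-down classification step in Section~\ref{sec:Asymptotic-behavior} invokes the identity $A(\rho,u_\infty)=\rho A(1,u_\infty/\rho)$ to apply the Li/Li--Nguyen Liouville theorem for the viscosity relation $A(1,\cdot)\in\partial\Gamma_2^+$; this rescaling degenerates at $\rho=0$, and the needed rigidity for $-\nabla^2 u_\infty\in\partial\Gamma_2^+$ in viscosity sense would have to be supplied separately.

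In short, you have correctly recognized that this is an open conjecture and correctly identified the three missing ingredients, which matches the paper's own assessment. However, the ``replacement tools'' you suggest (divergence-free Newton tensors for Pohozaev, ABP control via $\Delta u<0$, coupling monotonicity with Pohozaev) are heuristic and do not constitute a proof; nor does the paper supply them. If you wish to present something verifiable, the honest deliverable here is a precise statement of the three lemmas that would be needed (integral bound, interior $C^1/C^2$ estimate independent of $\inf u$, and viscosity rigidity for $\partial\Gamma_2^+$ at $\rho=0$) and a demonstration that, given those lemmas, the rest of the argument of Sections~\ref{sec:Asymptotic-behavior}--\ref{sec:Proof-of-main theorem} goes through unchanged --- rather than a claim that the conjecture itself is proved.
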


Note that Theorem \ref{thm:Maintheorem} indicates a possible approach
to solve Conjecture \ref{conj: 2hessian liouville} in dimension 4,
though several of our key estimates fail when $\rho=0.$ Apart from
Conjecture \ref{conj: 2hessian liouville}, when $\rho>0$, $n=2k>4$,
the Liouville theorem similar to Theorem \ref{thm:Maintheorem} is
unknown. It is interesting to see if our current approach may be extended
to general dimensions. If $n>2k$, Liouville type theorems for (\ref{eq:k-hessian exponents})
are also expected but yet to be established. The $k$-Hessian equation
in Conjecture \ref{conj: 2hessian liouville} is a limiting case for
either $\rho>0$ or $\rho<0$.

When $\rho<0$, $A(\rho,u)\in\Gamma_{2}^{+}$ is equivalent to the
negative cone condition: $A(-\rho,-u)\in\Gamma_{2}^{-}$. See Section
2 for more details. For the $\sigma_{2}$-Yamabe equation in dimension
4, mass monotonicity formulae have been established by Fang-Wei \cite{Fang2018sigma_Y,Fang2020AP}
in both positive and negative cone.  It is reasonable to find some
parallel theories for general negative $\rho$. We also expect that
the mass, defined in \cite{Fang2020AP}, of the asymptotically hyperbolic
metric for $\sigma_{2}$ in 4-manifolds holds in multiply connected
domain. This is indicated by the work of Shen-Wang \cite{SHEN2021109228},
which has explored a mass type quantity for $-\Delta u=e^{u}$ by
complex analysis in a multiply connected domain in dimension 2.

The rest of this paper is organized as follows. In Section \ref{sec:Preliminary-discussion},
we state some preliminary facts regarding the $A(\rho,u)$ tensor.
In Section \ref{sec:Mass-monotonicity-formulae}, we restate the mass
construction of \cite{Fang2018sigma_Y,Fang2020AP} and state a general
monotonicity formula. In Section \ref{sec:Dirichlet-problem-for},
we provide an application for the mass monotonicity formula and prove
the symmetry of solutions of a Dirichlet problem. \textcolor{black}{In
Section \ref{sec:Bounded-Integral}, we establish an integral bound
using Chang-Gursky-Yang \cite{CGY3chang2003entire}.} In Section \ref{sec:local-estimates},
we adopt the argument of Guan-Wang in \cite{guanWan2003} to describe
decay estimates of $u$ near infinity. In Section \ref{sec:Asymptotic-behavior},
using the estimates in section \ref{sec:local-estimates}, we establish
$C^{1,\alpha}$ convergence for a sequence of level sets of $u$.
In Section \ref{sec:Proof-of-main theorem}, we use the Pohozaev identity
and the monotonicity formula to prove the main theorem. In Section
\ref{sec:Radial-solutions}, we discuss the existence and uniqueness
of radial solutions for certain cases. In Appendix, we state and prove
a Pohozaev identity using 2 different methods.

\section{Preliminary discussion \label{sec:Preliminary-discussion}}

In this section, we list some elementary facts regarding our symmetric
tensor $A(\rho,u)$ and the corresponding $\sigma_{k}$ quantity.
From now on, we assume that $n=4.$
\begin{rem}
\label{rem:+/-cone}Direct computation shows the following
\[
A(-\rho,-u)=-A(\rho,u).
\]
Thus, all results regarding the positive cone case can be stated for
the negative cone case, with a sign change of $\rho$ and vice versa.
For simplicity, we only discuss the positive cone case in this paper.
We state some scaling properties of $A$ here for future use:

\[
A(\rho,u(ax)+b)=a^{2}A(\rho,u),
\]
\[
A(\rho,u)=\frac{1}{\rho}A(1,\rho u),
\]
for $\rho\neq0$.
\end{rem}

The divergence structure of $A(\rho,u)$ is of crucial importance
in our proof. It is easy to check that 
\begin{align}
\sigma_{2}(A(\rho,u)) & =\sigma_{2}(\nabla^{2}u)+\frac{\rho}{2}\partial_{i}(|\nabla u|^{2}u_{i})\nonumber \\
 & =-\frac{1}{2}\partial_{i}[(-\Delta u\delta_{ij}+u_{ij}-\rho|\nabla u|^{2}\delta_{ij})u_{j}].\label{eq:divergence structure}
\end{align}

In \cite{li2014harnack}, Li-Nguyen have studied the $\sigma_{k}$-Yamabe
problem and shown that $A(1,\underline{u}(r))\in\overline{\Gamma_{2}^{+}}$
in viscosity sense. Following \cite{li2009local,li2014harnack}, we
define the corresponding viscosity solution for (\ref{eq:PDE system}):
\begin{defn}
[Viscosity solutions] \label{def:Viscosity definition}Let $w$ be
a lower semi-continuous (respectively, upper-semi-continuous ) function
in $\Omega$ . We say that 
\[
A(\rho,w)\in\overline{\Gamma_{2}^{+}},\quad({\rm resp.}\ A(\rho,w)\in\mathrm{Sym}^{2}(\mathbb{R}^{4})\backslash\Gamma_{2}^{+}),
\]
in the \emph{viscosity sense} if for all $x_{0}\in\Omega$, $\varphi\in C^{2}(\Omega)$,
$w-\varphi(x_{0})=0$ and $w-\varphi\ge0$ (resp. $w-\varphi\leq0$)
near $x_{0}$, it holds 
\[
A(\rho,\varphi))|_{x_{0}}\in\overline{\Gamma_{2}^{+}},\quad({\rm resp.}\ A(\rho,\varphi)\in\mathrm{Sym}^{2}(\mathbb{R}^{4})\backslash\Gamma_{2}^{+}).
\]
For a continuous function $w$ in $\Omega$, we say $A(\rho,w)\in\partial\Gamma_{2}^{+}$
in the\emph{ }viscosity sense\emph{ }if $A(\rho,w)\in\overline{\Gamma_{2}^{+}}\backslash\Gamma_{2}^{+}$.
If $A(\rho,w)\in\overline{\Gamma_{2}^{+}}$, we call $w$ a \emph{viscosity
super-solution} to the equation $A(\rho,w)\in\partial\Gamma_{2}^{+}$. 
\end{defn}

It is straightforward to check that a $C^{2}$ function satisfying
relations mentioned above in the viscosity sense satisfies the corresponding
relations in the classic sense. 

Let 
\[
\underline{u}(r):=\min_{|x|=r}u(x).
\]
Since super-solutions are preserved when taking minimum (see \cite{Caffarelli1995FullyNE},
chapter 2), a straightforward argument shows the following:
\begin{lem}
\label{lem-aviscosity radial solution}$A(\rho,\underline{u}(|x|))\in\overline{\Gamma_{2}^{+}}$
in the viscosity sense. 
\end{lem}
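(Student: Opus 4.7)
The plan is to combine two ingredients: (i) rotational invariance of the operator $A(\rho,\cdot)$, and (ii) the classical viscosity-theoretic principle that the infimum of a family of classical super-solutions is a viscosity super-solution.

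First I would verify rotational invariance. For each $R\in SO(4)$, set $u_R(x):=u(Rx)$. From $\nabla u_R(x)=R^{\top}\nabla u(Rx)$ and $\nabla^{2}u_R(x)=R^{\top}\nabla^{2}u(Rx)R$, a direct computation gives $A(\rho,u_R)(x)=R^{\top}A(\rho,u)(Rx)R$, so the eigenvalues of $A(\rho,u_R)(x)$ agree with those of $A(\rho,u)(Rx)$. Consequently $u_R\in\mathcal{C}_{2}^{+}$ whenever $u$ is, and each $u_R$ is a classical super-solution to $A(\rho,\cdot)\in\overline{\Gamma_{2}^{+}}$. Since $SO(4)$ is compact and $u$ is continuous, the function $\underline{u}(|x|)=\inf_{R\in SO(4)}u_R(x)$ is continuous and the infimum is attained at every $x$.

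Next I would apply the touching-from-below argument demanded by Definition \ref{def:Viscosity definition}. Let $\varphi\in C^{2}$ satisfy $\varphi(x_0)=\underline{u}(|x_0|)$ with $\varphi\le\underline{u}(|\cdot|)$ near $x_0$. Choose $R_0\in SO(4)$ realizing $u_{R_0}(x_0)=\underline{u}(|x_0|)$. Then $\varphi\le u_{R_0}$ in a neighborhood of $x_0$ with equality at $x_0$, so $u_{R_0}-\varphi$ attains a local minimum there. This yields $\nabla\varphi(x_0)=\nabla u_{R_0}(x_0)$ and $\nabla^{2}(u_{R_0}-\varphi)(x_0)\ge 0$ as symmetric matrices. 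Because the first-order part of $A(\rho,\cdot)$ depends only on the gradient, one obtains
\[
A(\rho,\varphi)(x_0)=A(\rho,u_{R_0})(x_0)+\bigl(\nabla^{2}u_{R_0}-\nabla^{2}\varphi\bigr)(x_0),
\]
so $A(\rho,\varphi)(x_0)$ exceeds $A(\rho,u_{R_0})(x_0)\in\overline{\Gamma_{2}^{+}}$ by a positive semi-definite matrix.

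The proof then concludes by the well-known fact that $\overline{\Gamma_{2}^{+}}$ is invariant under addition of positive semi-definite matrices: both $\sigma_{1}$ and $\sigma_{2}$ are non-decreasing along PSD directions on $\overline{\Gamma_{2}^{+}}$, since $\partial\sigma_{j}/\partial A_{ij}$ is itself PSD on $\overline{\Gamma_{2}^{+}}$ for $j=1,2$. I do not expect a serious obstacle here: the lemma is essentially a transcription of the standard stability principle for viscosity super-solutions combined with $SO(4)$-invariance of the equation. The only points requiring explicit care are the compactness argument that produces the minimizing rotation $R_0$, and the PSD-monotonicity of $\overline{\Gamma_{2}^{+}}$; both are routine.
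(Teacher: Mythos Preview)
Your proposal is correct and follows exactly the approach the paper indicates: the paper does not give a detailed proof, but simply remarks that ``super-solutions are preserved when taking minimum (see \cite{Caffarelli1995FullyNE}, chapter 2)'' and calls the rest a straightforward argument. Your write-up is precisely that straightforward argument made explicit---rotational invariance to realize $\underline{u}(|x|)$ as an infimum over $SO(4)$-rotates, then the standard touching-from-below argument combined with PSD-monotonicity of $\overline{\Gamma_2^+}$.
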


We now state an important monotonicity formula for $\underline{u}(r)$
following a construction of Li-Nguyen \cite{li2014harnack}:
\begin{thm}
[\cite{li2014harnack}, Lemma 2.10]\label{lem:Li-Nguyen Monotonicity formula }For
$r>r_{0}>0$, 
\begin{equation}
r\mapsto\frac{\underline{u}(r)-\underline{u}(r_{0})}{\ln r-\ln r_{0}}\label{eq:lemma monontonicity formula}
\end{equation}
is non-increasing. In other words, $\underline{u}\circ\exp:(-\infty,\infty)\to\mathbb{R}$
is concave.
\end{thm}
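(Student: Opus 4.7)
The strategy is to translate the viscosity super-solution condition on $\underline{u}$ provided by Lemma~\ref{lem-aviscosity radial solution} into a one-dimensional differential inequality for $v(s) := \underline{u}(e^s)$, and then verify that this inequality forces $v$ to be concave in the viscosity sense.

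First, for a smooth radial function $w(r) = v(\ln r)$ on $\mathbb{R}^4$, a direct calculation in spherical coordinates, with $s=\ln r$ and primes denoting $d/ds$, shows that $A(\rho,w)$ has one radial eigenvalue $\lambda_r = r^{-2}\bigl(-v'' + v' + \tfrac{\rho}{2}(v')^2\bigr)$ and a tangential eigenvalue of multiplicity three $\lambda_t = r^{-2}\bigl(-v' - \tfrac{\rho}{2}(v')^2\bigr)$, yielding
\[
\sigma_1(A(\rho,w)) = r^{-2}\bigl(-v'' - 2v' - \rho (v')^2\bigr),\qquad \sigma_2(A(\rho,w)) = 3r^{-4}\, v'' v'\bigl(1 + \tfrac{\rho}{2}v'\bigr).
\]

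The heart of the argument is a short case analysis showing that the conditions $\sigma_1 \geq 0$ and $\sigma_2 \geq 0$ together force $v'' \leq 0$ at the point in question. If $v'(1+\tfrac{\rho}{2}v') > 0$, then $\sigma_2 \geq 0$ gives $v'' \geq 0$, while $\sigma_1 \geq 0$ rearranges to $v'' \leq -v'(2+\rho v') < 0$, a contradiction. If $v'(1+\tfrac{\rho}{2}v')<0$, then $\sigma_2 \geq 0$ directly gives $v''\leq 0$. The boundary cases $v'=0$ or $v'=-2/\rho$ collapse $\sigma_1\geq 0$ to $-v''\geq 0$. Hence in every case, $v'' \leq 0$.

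To pass to the viscosity setting, I would take any $\phi \in C^2(\mathbb{R})$ touching $v$ from below at some $s_0$ and lift to the radial function $\Phi(x) := \phi(\ln|x|)$, which is $C^2$ on $\mathbb{R}^4 \setminus\{0\}$ and touches $\underline{u}$ from below at any point $x_0$ with $|x_0|=e^{s_0}$. By Lemma~\ref{lem-aviscosity radial solution}, $A(\rho,\Phi)(x_0) \in \overline{\Gamma_2^+}$, and the case analysis above delivers $\phi''(s_0) \leq 0$. Since this holds for every such test function, $v$ satisfies $-v''\geq 0$ in the viscosity sense, which for the continuous function $v=\underline{u}\circ\exp$ is equivalent to concavity on $(-\infty,\infty)$, exactly the conclusion of the lemma. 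The only nontrivial step is the algebraic case analysis in the third paragraph; once the eigenvalue expressions are in hand, the radial lift makes the transfer to the viscosity framework routine.
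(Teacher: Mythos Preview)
Your argument is correct. The paper does not actually supply a proof of this statement: it simply remarks that the proof is almost the same as that of Lemma~2.10 in \cite{li2014harnack} and omits it. Your write-up fills in exactly the argument one would expect, and it is sound. The eigenvalue computation for a radial test function is right, the case analysis cleanly shows that $\sigma_1\ge 0$ and $\sigma_2\ge 0$ force $\phi''\le 0$ at the contact point, and the lift $\Phi(x)=\phi(\ln|x|)$ is a legitimate $C^2$ test function for the radial viscosity super-solution $\underline{u}(|x|)$ from Lemma~\ref{lem-aviscosity radial solution}. The conclusion that a continuous function satisfying $-v''\ge 0$ in the viscosity sense is concave is standard.

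Two very minor points of presentation you might tighten: when you write ``touches $\underline{u}$ from below'' you mean the radial extension $x\mapsto\underline{u}(|x|)$, matching the notation in Lemma~\ref{lem-aviscosity radial solution}; and in the boundary case $v'=-2/\rho$ you are implicitly assuming $\rho\neq 0$, which is harmless since for $\rho=0$ the factor $1+\tfrac{\rho}{2}v'$ never vanishes and only the case $v'=0$ remains. Neither affects correctness.
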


The proof of Theorem \ref{lem:Li-Nguyen Monotonicity formula } is
almost the same as that appeared in [\cite{li2014harnack}, Lemma 2.10].
We omit it here.

\section{Mass monotonicity formulae\label{sec:Mass-monotonicity-formulae}}

In this section, we generalize some results from \cite{Fang2018sigma_Y,Fang2020AP}
regarding the construction of a monotonic quantity along level sets.
We assume that $u\in\mathcal{C}_{2}^{+}$ satisfies (\ref{eq:inequality}).
Note that some of the notations that we use here are different from
those in \cite{Fang2018sigma_Y,Fang2020AP}.
\begin{rem}
\label{rem:signconvention}If $u\in\mathcal{C}_{2}^{+},$ then $\sigma_{1}(A(\rho,u))=-\Delta u-\rho|\nabla u|^{2}>0,$
which, by the maximum principle, indicates that $u$ has no interior
minimums; similarly, if $u\in\mathcal{C}_{2}^{-},$ then $u$ has
no interior maximums.
\end{rem}

Let $\Omega$ be either $\mathbb{R}^{4}$ or a bounded domain in
$\mathbb{R}^{4}$, and assume that $u$ satisfies (\ref{eq:inequality})
in $\Omega.$ When $\Omega$ is bounded, we further assume that $u|_{\partial\Omega}=\tau.$
For $t\in\mathbb{R},$ define the following sets

\[
L_{t}:=\{x\in\Omega:u(x)=t\},\ \ \Omega_{t}:=\{x\in\Omega:u(x)>t\}.
\]
We note that when $\Omega=\mathbb{R}^{4}$, if $f(u)$ is integrable
in $\mathbb{R}^{4},$ a similar argument of Proposition 3.6 in Guan-Wang
\cite{guanWan2003} in our settings shows that $\lim_{x\to\infty}u(x)=-\infty$.
See Corollary \ref{cor:Guan-Wang} for details. Thus by maximum principle,
without loss of generality, we make the following assumptions:
\begin{enumerate}
\item $\tau':=\max_{\Omega}u>\tau$ is attained inside $\Omega$. Define
\[
\mathcal{I}=\begin{cases}
(-\infty,\tau'), & {\rm if}\ \Omega=\mathbb{R}^{4},\\
(\tau,\tau'), & {\rm if}\ \Omega\ {\rm is\ bounded}.
\end{cases}
\]
\item $\Omega_{t}$ and $L_{t}$ are non-empty for $t\in\mathcal{I}$. 
\item $\Omega_{t}$ and $L_{t}$ are bounded sets in $\Omega$. 
\end{enumerate}
From Lemma 10 in \cite{Fang2018sigma_Y}, we have 
\begin{lem}
Let
\[
\mathcal{S}:=\{x\in\mathbb{R}^{4}:\nabla u(x)=0\}.
\]
If $\sigma_{2}(A(\rho,u))\not=0$ in $\Omega$, $\mathcal{S}\cap\Omega$
has Hausdorff dimension at most $2$. \label{lem:Structure of S} 
\end{lem}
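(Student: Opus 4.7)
The plan is to reduce the statement to a straightforward application of the implicit function theorem to the gradient map $\nabla u : \Omega \to \mathbb{R}^4$, whose zero set is exactly $\mathcal{S}\cap\Omega$. The key observation is that at any critical point of $u$, the hypothesis $\sigma_2(A(\rho,u))\neq 0$ forces the Hessian $\nabla^2 u$ to have rank at least $2$, which via the implicit function theorem immediately bounds the local dimension of $\mathcal{S}\cap\Omega$ by $2$.

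First, I would observe that every lower-order term in the definition $A(\rho,u) = -u_{ij} + \rho u_i u_j - \tfrac{\rho}{2}|\nabla u|^2 \delta_{ij}$ involves $\nabla u$ as a factor, so at any $x_0\in\mathcal{S}\cap\Omega$, where $\nabla u(x_0)=0$, the tensor simplifies to $A(\rho,u)(x_0) = -\nabla^2 u(x_0)$. The hypothesis then reads $\sigma_2(-\nabla^2 u(x_0))\neq 0$. A simple linear algebra fact is that if a symmetric $4\times 4$ matrix has rank at most $1$, then it has at most one nonzero eigenvalue, in which case every pairwise product of eigenvalues vanishes and $\sigma_2=0$. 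Therefore $\nabla^2 u(x_0)$ must have rank at least $2$.

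Next, I would apply the implicit function theorem to the smooth map $\Phi := \nabla u : \Omega \to \mathbb{R}^4$. Its Jacobian at $x_0\in\mathcal{S}\cap\Omega$ equals $\nabla^2 u(x_0)$, which by the previous paragraph has rank at least $2$. I can therefore select two components $u_{i_1}, u_{i_2}$ whose gradients at $x_0$ are linearly independent; the implicit function theorem then produces a neighborhood $U$ of $x_0$ in which $\{x\in U : u_{i_1}(x) = u_{i_2}(x) = 0\}$ is a $C^1$ submanifold of dimension $2$. Since $\mathcal{S}\cap U$ is contained in this submanifold, its Hausdorff dimension in $U$ is at most $2$.

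Finally, since $\Omega$ is second countable and only finitely many choices of index pairs $(i_1,i_2)$ exist, I can cover $\mathcal{S}\cap\Omega$ by countably many such neighborhoods; the Hausdorff dimension bound is preserved under countable unions, giving $\dim_{\mathcal{H}}(\mathcal{S}\cap\Omega)\leq 2$. I do not expect any substantive obstacle in the proof: the entire argument is an elementary consequence of the fact that the nonlinear perturbation terms in $A(\rho,u)$ vanish at critical points together with the implicit function theorem. The only mild point to handle carefully is that the independent pair $(i_1, i_2)$ may depend on $x_0$, which is harmless since there are only $\binom{4}{2}=6$ possible pairs.
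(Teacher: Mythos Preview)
Your argument is correct. The paper itself does not supply a proof of this lemma; it simply cites Lemma~10 of \cite{Fang2018sigma_Y}. Your reasoning---observing that the lower-order terms in $A(\rho,u)$ vanish at critical points so that $\sigma_2(A(\rho,u))=\sigma_2(-\nabla^2 u)$ there, deducing $\operatorname{rank}\nabla^2 u\ge 2$ from $\sigma_2\neq 0$, and then applying the implicit function theorem to two suitably chosen components of $\nabla u$---is the standard route and almost certainly what appears in the cited reference.

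One small point of precision: you call $\Phi=\nabla u$ a ``smooth map'', but in the setting of the paper $u$ is only assumed $C^2$, so $\nabla u$ is merely $C^1$. This does not affect the argument, since the implicit function theorem applies under $C^1$ regularity and still yields a $C^1$ $2$-dimensional submanifold containing $\mathcal{S}$ locally; just adjust the wording.
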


Locally, $L_{t}\backslash\mathcal{S}$ is a regular $C^{2}$ hypersurface.
As $u$ is continuous, $\partial\Omega_{t}\subset L_{t}$ and $L_{t}=\partial\Omega_{t}\cup\left(\mathcal{S}\cap L_{t}\right)$.
By Lemma \ref{lem:Structure of S}, $L_{t}\cap\mathcal{S}$ has co-dimension
at least $1$ in $L_{t}$ and 
\begin{equation}
0=\mathcal{H}^{3}(L_{t}\cap\mathcal{S})=\mathcal{H}^{3}\left(L_{t}\backslash\partial\Omega_{t}\right),\label{eq:zero measure}
\end{equation}
where $\mathcal{H}^{3}$ is the 3-dimensional Hausdorff measure. Therefore,
the singular part of $\partial\Omega_{t}$ has zero $\mathcal{H}^{3}$-measure,
implying $\Omega_{t}$ has almost $C^{2}$ boundary. 

We use $dl$ to denote the $\mathcal{H}^{3}$-measure on $L_{t}$
and it agrees with the volume element on $L_{t}\backslash\mathcal{S}$.
By (\ref{eq:zero measure}), we may use $\partial\Omega_{t},L_{t},L_{t}\backslash\mathcal{S}$
interchangeably when taking integrals of measurable functions with
respect to $dl$. 

In $L_{t}\backslash\mathcal{S}$, let $\nu$ be the outward normal
vector of $L_{t},$ then by Remark \ref{rem:signconvention}, 
\begin{equation}
\nu=-\frac{\nabla u}{|\nabla u|}.\label{eq:nu def}
\end{equation}
Fix a regular point $x\in L_{t}\backslash\mathcal{S}$. In a small
open neighborhood $U$ of $x$, we define, for any $x'\in U$
\[
y^{4}(x'):=-{\rm sgn}(u(x')-t){\rm dist}(x',L_{t}).
\]
We may choose a normal coordinate $y^{1},y^{2},$$y^{3}$ on $L_{t}$
near $x$ and extend them in $U$ so that $\langle\frac{\partial}{\partial y^{i}},\frac{\partial}{\partial y^{j}}\rangle(x)=\delta_{ij}$
and $\langle\frac{\partial}{\partial y^{i}},\frac{\partial}{\partial y^{4}}\rangle|_{U}=\delta_{i4}.$
In particular, by the choice of $y^{4},$ we have $\frac{\partial}{\partial y^{4}}=\nu.$
Let $h_{\alpha\beta}$ be the second fundamental form of the level
set $L_{t}$ with respect to $-\nu$. Using the Gauss-Weingarten formula
in our coordinate system $\{y^{i}\}$, we obtain 
\begin{equation}
A(\rho,u)=\left(\begin{array}{cccc}
 &  &  & -\nabla_{41}u\\
 & h_{\alpha\beta}|\nabla u|-\frac{\rho}{2}|\nabla u|^{2}\delta_{\alpha\beta} &  & -\nabla_{42}u\\
 &  &  & -\nabla_{43}u\\
-\nabla_{41}u & -\nabla_{42}u & -\nabla_{43}u & -\nabla_{44}u+\frac{\rho}{2}|\nabla u|^{2}
\end{array}\right),\label{eq:expression of A}
\end{equation}
and
\begin{equation}
-\Delta u=H|\nabla u|-u_{44}.\label{eq:new100}
\end{equation}
Here we use $\nabla_{ij}u=u_{ij}$ to denote the covariant derivatives
of $u$ with respect to our chosen coordinate system. For future use,
define a symmetric $3\times3$ matrix as 
\[
\widetilde{A}(x):=(h_{\alpha\beta}|\nabla u|-\frac{\rho}{2}|\nabla u|^{2}\delta_{\alpha\beta}).
\]

With (\ref{eq:expression of A}), we prove a useful lemma.
\begin{lem}
\label{lem:divergence }If $\rho\geq0$ and $\sigma_{2}(A(\rho,u))>0$,
then in $\mathbb{R}^{4}\backslash\mathcal{S}$,
\begin{equation}
{\rm div}(|\nabla u|^{2}\nabla u)=3|\nabla u|^{2}\left(u_{44}-\frac{1}{3}H|\nabla u|\right)<0,\label{eq:useful lemma eq 1}
\end{equation}
\begin{equation}
\sigma_{2}(A(\rho,u))\leq\sigma_{1}(\tilde{A})(\frac{1}{3}H|\nabla u|-u_{44}).\label{eq:useful lemma eq 2}
\end{equation}
\end{lem}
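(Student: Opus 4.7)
The plan is to work pointwise at a fixed regular point $x \in \mathbb{R}^{4}\backslash\mathcal{S}$ in the normal coordinate frame $\{y^{i}\}$ introduced before (\ref{eq:expression of A}), where $\nabla u = -|\nabla u|\,\partial_{y^{4}}$, hence $u_{\alpha}(x) = 0$ for $\alpha = 1,2,3$ and $u_{4}(x) = -|\nabla u|$.

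For the divergence identity in (\ref{eq:useful lemma eq 1}), I first expand
\[
\mathrm{div}(|\nabla u|^{2}\nabla u) = 2\,u_{i}u_{j}u_{ij} + |\nabla u|^{2}\Delta u,
\]
observe that in this frame $u_{i}u_{j}u_{ij}|_{x} = |\nabla u|^{2}u_{44}$, and use (\ref{eq:new100}) to substitute $\Delta u = u_{44} - H|\nabla u|$. Combining yields $\mathrm{div}(|\nabla u|^{2}\nabla u) = 3|\nabla u|^{2}(u_{44}-\tfrac{1}{3}H|\nabla u|)$; the strict negativity will be postponed until the sign of $\tfrac{1}{3}H|\nabla u| - u_{44}$ is controlled via (\ref{eq:useful lemma eq 2}).

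For (\ref{eq:useful lemma eq 2}), I read off from (\ref{eq:expression of A}) the block decomposition
\[
A(\rho,u) = \begin{pmatrix} \widetilde{A} & b \\ b^{T} & a \end{pmatrix}, \qquad a := -u_{44}+\tfrac{\rho}{2}|\nabla u|^{2}, \qquad b := (-u_{41},-u_{42},-u_{43})^{T},
\]
with $\sigma_{1}(\widetilde{A}) = H|\nabla u| - \tfrac{3\rho}{2}|\nabla u|^{2}$, and apply the block expansion $\sigma_{2}(A) = \sigma_{2}(\widetilde{A}) + a\,\sigma_{1}(\widetilde{A}) - |b|^{2}$. The key algebraic simplification is $\tfrac{1}{3}H|\nabla u| - u_{44} - a = \tfrac{1}{3}\sigma_{1}(\widetilde{A})$, which collapses the target expression to
\[
\sigma_{1}(\widetilde{A})\!\left(\tfrac{1}{3}H|\nabla u|-u_{44}\right) - \sigma_{2}(A) = \tfrac{1}{3}\sigma_{1}(\widetilde{A})^{2} - \sigma_{2}(\widetilde{A}) + |b|^{2}.
\]
Newton's inequality for three real eigenvalues, $\sigma_{2}(\widetilde{A}) \leq \tfrac{1}{3}\sigma_{1}(\widetilde{A})^{2}$, together with $|b|^{2} \geq 0$, makes the right-hand side non-negative, proving (\ref{eq:useful lemma eq 2}).

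To finish the strict inequality in (\ref{eq:useful lemma eq 1}), I still need $\sigma_{1}(\widetilde{A}) > 0$; this combined with $\sigma_{2}(A) > 0$ and (\ref{eq:useful lemma eq 2}) will force $\tfrac{1}{3}H|\nabla u| - u_{44} > 0$. I plan to argue by contradiction: if $\sigma_{1}(\widetilde{A}) \leq 0$, then $\sigma_{1}(A) = \sigma_{1}(\widetilde{A}) + a > 0$ forces $a > -\sigma_{1}(\widetilde{A}) \geq 0$, and then the block formula with $\sigma_{2}(A) > 0$ gives $\sigma_{2}(\widetilde{A}) > -a\,\sigma_{1}(\widetilde{A}) \geq \sigma_{1}(\widetilde{A})^{2}$, contradicting $\sigma_{2}(\widetilde{A}) \leq \tfrac{1}{3}\sigma_{1}(\widetilde{A})^{2}$. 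The conceptually hard part, in my view, is precisely this last step: positivity of the tangential trace is not automatic for a general principal $3\times 3$ submatrix of a $\Gamma_{2}^{+}$ matrix, and it is the specific algebraic compatibility between $a$ and $\sigma_{1}(\widetilde{A})$ inherited from the structure of $A(\rho,u)$ that allows Newton's inequality to force the contradiction.
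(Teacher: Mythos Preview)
Your proof is correct and follows essentially the same route as the paper: the same block expansion of $\sigma_{2}(A)$ from (\ref{eq:expression of A}), the same use of Newton's inequality $\sigma_{2}(\widetilde{A})\le \tfrac{1}{3}\sigma_{1}(\widetilde{A})^{2}$, and the same algebraic collapse $\tfrac{\rho}{2}|\nabla u|^{2}+\tfrac{1}{3}\sigma_{1}(\widetilde{A})=\tfrac{1}{3}H|\nabla u|$ to reach (\ref{eq:useful lemma eq 2}), after which (\ref{eq:useful lemma eq 1}) follows from $\sigma_{1}(\widetilde{A})>0$ and $\sigma_{2}(A)>0$.

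The only substantive difference is in how $\sigma_{1}(\widetilde{A})>0$ is obtained. The paper simply invokes $u\in\mathcal{C}_{2}^{+}$: for any $A\in\Gamma_{2}^{+}$ the first Newton tensor $\sigma_{1}(A)I-A$ is positive definite, so its $(4,4)$ entry $\sigma_{1}(A)-A_{44}=\sigma_{1}(\widetilde{A})$ is positive. Your contradiction argument is a correct, self-contained rederivation of exactly this fact, but your closing remark mischaracterizes it: the positivity of $\sigma_{1}(\widetilde{A})$ \emph{is} automatic for any principal $3\times 3$ block of a $\Gamma_{2}^{+}$ matrix, and your argument uses only the generic block identities $\sigma_{1}(A)=\sigma_{1}(\widetilde{A})+a$ and $\sigma_{2}(A)=\sigma_{2}(\widetilde{A})+a\,\sigma_{1}(\widetilde{A})-|b|^{2}$ together with Newton, not anything special about $A(\rho,u)$.
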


\begin{proof}
Since $u\in\mathcal{C}_{2}^{+},$
\begin{equation}
\sigma_{1}(\tilde{A})=H|\nabla u|-\frac{3\rho}{2}|\nabla u|^{2}>0.\label{eq:sigma_1(tilde A) mean curvature is positive}
\end{equation}
From (\ref{eq:expression of A}) and Newton-MacLaurin inequality,
\begin{align*}
\sigma_{2}(A(\rho,u)) & =\sigma_{1}(\tilde{A})(-u_{44}+\frac{\rho}{2}|\nabla u|^{2})+\sigma_{2}(\tilde{A})-\sum_{1\leq i<4}u_{i4}^{2}\\
 & \leq\sigma_{1}(\tilde{A})\left(-u_{44}+\frac{\rho}{2}|\nabla u|^{2}\right)+\sigma_{2}(\tilde{A})\\
 & \leq\sigma_{1}(\tilde{A})\left(-u_{44}+\frac{\rho}{2}|\nabla u|^{2}\right)+\frac{1}{3}\sigma_{1}^{2}(\tilde{A})\\
 & =\sigma_{1}(\tilde{A})\left(-u_{44}+\frac{1}{3}H|\nabla u|\right).
\end{align*}
Since $\sigma_{2}(A(\rho,u))>0$ and $\sigma_{1}(\tilde{A})>0$, by
(\ref{eq:useful lemma eq 2}), we have $-u_{44}+\frac{1}{3}H|\nabla u|>0$.
Then,
\[
{\rm div}(|\nabla u|^{2}\nabla u)=3|\nabla u|^{2}\left(u_{44}-\frac{1}{3}H|\nabla u|\right)<0.
\]
\end{proof}
We choose $F(t)$ such that $F'(t)=f(t)$ and define the following
quantities:
\begin{align}
N(t) & :=\frac{1}{|S^{3}|}\int_{\Omega_{t}}\sigma_{2}(A(\rho,u(x))dx,\label{eq:NPQV definition}\\
P(t) & :=\frac{1}{|S^{3}|}\left(F(t)|\Omega_{t}|-\int_{\Omega_{t}}F(u(x))dx\right)=\frac{1}{|S^{3}|}\int_{\Omega_{t}}[F(t)-F(u(x))]dx,\nonumber \\
Q(t) & :=-\left(\frac{1}{|S^{3}|}\int_{\Omega_{t}}{\rm div}(-|\nabla u|^{2}\nabla u)dx\right)^{\frac{1}{3}},\nonumber \\
V(t) & :=\frac{1}{|S^{3}|}\int_{\Omega_{t}}dx=\frac{1}{|S^{3}|}|\Omega_{t}|.\nonumber 
\end{align}
We show that these quantities are absolutely continuous with respect
to $t$. In the following, $a.e.$ means almost everywhere with respect
to the Lebesgue measure on $\mathcal{I}$ unless otherwise mentioned. 
\begin{lem}
\label{lem:absolutely continuous }Let $\phi$ be a function such
that $\|\phi\|_{L^{\infty}(\Omega_{t})}<C$ for $t\in[t_{0},t_{1}]\subset\mathcal{I}$.
Then,
\[
m_{\phi}(t):=\int_{\Omega_{t}}\phi dx
\]
is an absolutely continuous function defined on $[t_{0},t_{1}]$.
Moreover, 
\[
m_{\phi}'(t)=-\int_{L_{t}}\frac{\phi}{|\nabla u|}d\mathcal{H}^{3}\quad a.e.
\]
\end{lem}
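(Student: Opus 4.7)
The plan is to apply the coarea formula to rewrite $m_\phi(t)$ as a one-dimensional Lebesgue integral, and then invoke the fundamental theorem of calculus for Lebesgue integrals.

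\medskip

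\noindent\textbf{Step 1: Discard the critical set.} Since $u \in \mathcal{C}_2^+$ and $\sigma_2(A(\rho,u)) > 0$, Lemma~\ref{lem:Structure of S} says that the critical set $\mathcal{S}=\{\nabla u=0\}$ has Hausdorff dimension at most $2$ in $\mathbb{R}^4$, hence $4$-dimensional Lebesgue measure zero. So $m_\phi(t) = \int_{\Omega_t \setminus \mathcal{S}} \phi\, dx$, and on $\Omega_t \setminus \mathcal{S}$ the function $u$ is $C^2$ with $|\nabla u| > 0$.

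\medskip

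\noindent\textbf{Step 2: Apply the coarea formula.} Fix $[t_0,t_1]\subset \mathcal{I}$. Since $u$ is $C^2$ (in particular Lipschitz on the bounded set $\Omega_{t_0}$) and $\phi \in L^\infty(\Omega_{t_0})$ with $|\Omega_{t_0}|<\infty$, the coarea formula applied to $\phi/|\nabla u|$ on $\Omega_{t_0}\setminus (\Omega_{t_1}\cup \mathcal{S})$ gives, for any $t\in[t_0,t_1]$,
\begin{equation*}
m_\phi(t) - m_\phi(t_1) \;=\; \int_{\Omega_t \setminus \Omega_{t_1}} \phi\, dx \;=\; \int_{t}^{t_1} \left( \int_{L_s\setminus\mathcal{S}} \frac{\phi}{|\nabla u|}\, d\mathcal{H}^3 \right) ds.
\end{equation*}
By (\ref{eq:zero measure}), $\mathcal{H}^3(L_s\cap\mathcal{S})=0$ for every $s\in\mathcal{I}$, so the inner integral equals $\int_{L_s}\frac{\phi}{|\nabla u|}\,d\mathcal{H}^3 =: g(s)$.

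\medskip

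\noindent\textbf{Step 3: Integrability and absolute continuity.} Taking $t=t_0$ in the identity above, we get $\int_{t_0}^{t_1} g(s)\,ds = m_\phi(t_0)-m_\phi(t_1) \le C|\Omega_{t_0}|<\infty$. Thus $g\in L^1([t_0,t_1])$ (and in particular $g(s)$ is finite for almost every $s$). Writing
\begin{equation*}
m_\phi(t) = m_\phi(t_1) + \int_{t}^{t_1} g(s)\,ds,
\end{equation*}
we conclude that $m_\phi$ is absolutely continuous on $[t_0,t_1]$.

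\medskip

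\noindent\textbf{Step 4: Differentiation.} By the Lebesgue differentiation theorem applied to the $L^1$ function $g$, for almost every $t\in[t_0,t_1]$,
\begin{equation*}
m_\phi'(t) \;=\; -g(t) \;=\; -\int_{L_t}\frac{\phi}{|\nabla u|}\,d\mathcal{H}^3,
\end{equation*}
which is the required formula. Since $[t_0,t_1]\subset\mathcal{I}$ was arbitrary, both conclusions hold on all of $\mathcal{I}$.

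\medskip

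The only genuinely technical point is the invocation of the coarea formula with the singular set $\mathcal{S}$ carved out; this is harmless because $\mathcal{S}$ has $4$-dimensional Lebesgue measure zero (Step 1) and its intersection with each $L_s$ has $\mathcal{H}^3$-measure zero. After that, everything reduces to absolute continuity of an indefinite Lebesgue integral, which is classical.
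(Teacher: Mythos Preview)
Your proof is correct and follows essentially the same route as the paper's: discard the measure-zero critical set, apply the coarea formula, and read off absolute continuity and the a.e.\ derivative from the indefinite integral. One small slip in Step~3: for signed $\phi$, the finiteness of $\int_{t_0}^{t_1} g(s)\,ds$ does not by itself yield $g\in L^1([t_0,t_1])$; you should first run the coarea identity with $|\phi|$ (equivalently, split $\phi=\phi^+-\phi^-$ as the paper does) to get $\int_{t_0}^{t_1}\int_{L_s}\frac{|\phi|}{|\nabla u|}\,d\mathcal{H}^3\,ds = \int_{\Omega_{t_0}\setminus\Omega_{t_1}}|\phi|\,dx \le C|\Omega_{t_0}|<\infty$, which is what actually gives $g\in L^1$.
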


\begin{proof}
Since $\|\phi\|_{L^{\infty}(\Omega_{t})}<\infty$, and $\Omega_{t}$
is bounded, $m_{\phi}(t)$ is well defined. We first assume $\phi\geq0$.
By Lemma \ref{lem:Structure of S} and the co-area formula (Proposition
2.1 in \cite{Brothers1988}),
\begin{align}
m_{\phi}(t_{0})-m_{\phi}(t) & =\int_{\mathcal{\mathcal{S}}\cap u^{-1}((t_{0},t))}\phi d\mathcal{H}^{4}+\int_{t_{0}}^{t}\int_{L_{\tau}}\frac{\phi}{|\nabla u|}d\mathcal{H}^{3}d\tau\label{eq:coarea f2}\\
 & =\int_{t_{0}}^{t}\int_{L_{\tau}}\frac{\phi}{|\nabla u|}d\mathcal{H}^{3}d\tau.\nonumber 
\end{align}
Thus, $m_{\phi}$ is absolutely continuous. Since the left hand side
of (\ref{eq:coarea f2}) is finite, we have $\int_{L_{t}}\frac{\phi}{|\nabla u|}d\mathcal{H}^{3}<\infty\ \ a.e.$,
and 
\begin{equation}
m_{\phi}'(t)=-\int_{L_{t}}\frac{\phi}{|\nabla u|}d\mathcal{H}^{3}\quad a.e.\label{eq:a.e. differentiable}
\end{equation}

If $\phi$ has negative parts, we let $\phi^{+}=\max\{\phi,0\}$,
$\phi^{-}=\max\{-\phi,0\}$. Applying (\ref{eq:coarea f2}) to $\phi^{+}$
and $\phi^{-}$ shows that $m_{\phi^{+}}(t),m_{\phi^{-}}(t)$ are
both finite and absolutely continuous with derivatives $a.e.$ in
the form of (\ref{eq:a.e. differentiable}). We then conclude the
Lemma since $m_{\phi}=m_{\phi^{+}}-m_{\phi^{-}}$.  
\end{proof}
\begin{cor}
\label{cor:-N, P absolutely continuous and measure positive}$N,P,Q,V$
are absolutely continuous functions in $t$. Furthermore, $\mathcal{H}^{3}(L_{t})$
is positive and finite for almost all $t$ in $\mathcal{I}$. 
\end{cor}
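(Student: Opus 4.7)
My plan is to reduce each claim to Lemma \ref{lem:absolutely continuous } by verifying that the relevant integrand is uniformly bounded on $\bar{\Omega}_{t_{0}}$ for every $[t_{0},t_{1}]\subset\mathcal{I}$. Because $\Omega_{t_{0}}$ is bounded (by assumption~3) and $u\in C^{2}(\bar{\Omega})$, any continuous expression in $u,\nabla u,\nabla^{2}u$ is automatically bounded on the compact set $\bar{\Omega}_{t_{0}}$. For $V$ I would take $\phi\equiv 1$; for $N$ take $\phi=\sigma_{2}(A(\rho,u))$. For $P$ I would split
\[
P(t)=F(t)V(t)-\frac{1}{|S^{3}|}\int_{\Omega_{t}}F(u(x))\,dx,
\]
so that the second term is handled by Lemma \ref{lem:absolutely continuous } with bounded integrand $F\circ u$, while the first term is the product of the Lipschitz function $F$ (Lipschitz on the compact range $[\tau,\tau']$ of $u$ since $F'=f$ is continuous) and the absolutely continuous $V$, hence absolutely continuous.

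The quantity $Q(t)=-I(t)^{1/3}$ with $I(t)=\frac{1}{|S^{3}|}\int_{\Omega_{t}}\mathrm{div}(-|\nabla u|^{2}\nabla u)\,dx$ requires a little more care because of the cube-root. Expanding $\mathrm{div}(-|\nabla u|^{2}\nabla u)=-2u_{ij}u_{i}u_{j}-|\nabla u|^{2}\Delta u$ shows the integrand is continuous and hence bounded on $\bar{\Omega}_{t_{0}}$, so Lemma \ref{lem:absolutely continuous } gives absolute continuity of $I$. By Lemma \ref{lem:divergence } the integrand is strictly positive on $\mathbb{R}^{4}\setminus\mathcal{S}$, a set of full Lebesgue measure by Lemma \ref{lem:Structure of S}, and the integration domain $\Omega_{t}$ is monotone in $t$, so $I$ is strictly positive and non-increasing. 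Thus $I(t)\geq I(t_{1})>0$ on $[t_{0},t_{1}]$, and since $s\mapsto s^{1/3}$ is Lipschitz on $[I(t_{1}),I(t_{0})]$, the composition $Q$ is absolutely continuous.

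Finally, for $\mathcal{H}^{3}(L_{t})$ I would apply Lemma \ref{lem:absolutely continuous } with the bounded choice $\phi=|\nabla u|$. Using (\ref{eq:zero measure}) to discard the $\mathcal{H}^{3}$-null set $L_{t}\cap\mathcal{S}$, the a.e.\ derivative reads $m'_{|\nabla u|}(t)=-\mathcal{H}^{3}(L_{t})$, so absolute continuity of $m_{|\nabla u|}$ immediately gives finiteness for a.e.\ $t$. Positivity is the delicate step: $\mathcal{H}^{3}(L_{t})=0$ is equivalent to $L_{t}\subset\mathcal{S}$, because the implicit function theorem turns $L_{t}\setminus\mathcal{S}$ into a $C^{2}$ hypersurface wherever it is non-empty. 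I would then show that the set of ``bad'' $t$ is Lebesgue-null by combining Lemma \ref{lem:Structure of S} with the co-area identity
\[
\int_{a}^{b}\mathcal{H}^{3}(L_{s})\,ds=\int_{\Omega_{a}\setminus\Omega_{b}}|\nabla u|\,dx>0
\]
(valid on every subinterval $[a,b]\subset\mathcal{I}$, since $\{a<u<b\}$ is a non-empty open set meeting $\mathbb{R}^{4}\setminus\mathcal{S}$) together with the fact that $u$ is constant along each path-component of $\mathcal{S}$ (as $\nabla u\equiv 0$ there). This Sard-type step is the only non-routine part; everything else reduces directly to Lemma \ref{lem:absolutely continuous }.
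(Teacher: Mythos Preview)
Your treatment of $N$, $V$, $Q$, and the finiteness of $\mathcal{H}^{3}(L_{t})$ is correct and matches the paper's argument almost verbatim; your split of $P(t)=F(t)V(t)-\frac{1}{|S^{3}|}\int_{\Omega_{t}}F(u)\,dx$ is in fact slightly more careful than the paper, which simply asserts that Lemma~\ref{lem:absolutely continuous } applies directly even though the integrand $F(t)-F(u(x))$ depends on $t$.

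The genuine gap is in your positivity argument. You correctly observe that $\mathcal{H}^{3}(L_{t})=0$ forces $L_{t}\subset\mathcal{S}$, hence $t\in u(\mathcal{S})$; but the step ``$|u(\mathcal{S})|=0$'' does not follow from what you have written. Constancy of $u$ on path-components of $\mathcal{S}$ is useless without a bound on the number of such components, and the co-area identity you display only shows that $\mathcal{H}^{3}(L_{s})>0$ on a set of positive measure in every interval, not almost everywhere. One could try a Sard-type argument via the Taylor estimate $|u(y)-u(x)|\le C|y-x|^{2}$ for $x,y\in\mathcal{S}$, which yields $\mathcal{H}^{1}(u(\mathcal{S}))\le C\,\mathcal{H}^{2}(\mathcal{S})$; however Lemma~\ref{lem:Structure of S} only gives $\dim_{H}\mathcal{S}\le 2$, so $\mathcal{H}^{2}(\mathcal{S})$ may be infinite and the estimate is vacuous. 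Since the corollary is stated for $u\in C^{2}$ (not $C^{4}$, where classical Sard would apply), this route does not close.

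The paper avoids the Sard issue entirely. It observes that $\Omega_{t}$ is a set of finite perimeter (from $\mathcal{H}^{3}(L_{t})<\infty$ a.e.) and invokes De~Giorgi's structure theorem to identify the perimeter $P(\Omega_{t})$ with $\mathcal{H}^{3}$ of the reduced boundary, which agrees with $\mathcal{H}^{3}(L_{t})$ for a.e.\ $t$ because the topological boundary is almost $C^{2}$. Since $\Omega_{t}$ is a non-empty open set, $|\Omega_{t}|>0$, and the isoperimetric inequality for sets of finite perimeter immediately gives $P(\Omega_{t})>0$. This yields $\mathcal{H}^{3}(L_{t})>0$ a.e.\ without any appeal to critical values of $u$.
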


\begin{proof}
$N,P,Q^{3},V$ are all integrals of continuous functions on $\Omega_{t}$.
So we can apply Lemma \ref{lem:absolutely continuous } directly.
By Lemma \ref{lem:divergence } and Lemma \ref{lem:Structure of S},
${\rm div}(|\nabla u|^{2}\nabla u)<0$ $\mathcal{H}^{4}$-$a.e.$
in $\Omega$. If $Q^{3}(t')=0$ for some $t'$, then $|\nabla u|$
vanishes identically on $\Omega_{t}$, contradicted to Lemma \ref{lem:Structure of S}.
Hence, $Q^{3}(t)<0$ in any closed interval in $\mathcal{I}$. Since
$Q^{3}(t)$ stays away from $0$, $Q(t)$ is also absolutely continuous. 

The second statement is a direct application of Lemma \ref{lem:absolutely continuous }
to $m_{|\nabla u|}(t)=\int_{\Omega_{t}}|\nabla u|dx$. Then, $m_{|\nabla u|}'(t)=\int_{L_{t}}1d\mathcal{H}^{3}<\infty$
$a.e$. Hence, $\mathcal{H}^{3}(L_{t})<\infty$ $a.e.$ By De Giorgi's
structure theorem (Theorem 15.9 in \cite{Maggi-book}) and Lemma 11,
the perimeter of $\Omega_{t}$, $P(\Omega_{t})=\mathcal{H}^{3}(L_{t})$
$a.e.$ Then, $\mathcal{H}^{3}(L_{t})>0$ $a.e.$, and the positive
of $\mathcal{H}^{3}(L_{t})$ is due to isoperimetric inequality (Theorem
14.1 in \cite{Maggi-book}). 
\end{proof}
As all functions defined in (\ref{eq:NPQV definition}) are absolutely
continuous, we can compute their derivatives.
\begin{lem}
\label{lem:prep}For almost all $t$ in $\mathcal{I}$, we have 
\begin{align}
Q(t) & =-\left(\frac{1}{|S^{3}|}\int_{L_{t}}|\nabla u|^{3}dl\right)^{\frac{1}{3}},\label{eq:Q(t)}\\
N(t) & =\frac{1}{2|S^{3}|}\int_{L_{t}}(H|\nabla u|^{2}-\rho|\nabla u|^{3})\ dl,\label{eq:new101}\\
N'(t) & =-\frac{1}{|S^{3}|}\int_{L_{t}}\frac{\sigma_{2}(A(\rho,u))}{|\nabla u|}dl,\label{eq:new102}\\
V'(t) & =-\frac{1}{|S^{3}|}\int_{L_{t}}\frac{1}{|\nabla u|}dl,\label{eq:new103}\\
P'(t) & =\frac{1}{|S^{3}|}f(t)V(t),\label{eq:new104}\\
Q'(t) & =\frac{1}{3Q^{2}}\cdot\frac{1}{|S^{3}|}\int_{L_{t}}(H|\nabla u|-3u_{44})|\nabla u|dl.\label{eq:new105}
\end{align}
Here $H$ is the mean curvature of regular points on $L_{t}$. 
\end{lem}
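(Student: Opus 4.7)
The plan is to derive the level-set representations (\ref{eq:Q(t)}) and (\ref{eq:new101}) by applying the divergence theorem on $\Omega_t$, and to obtain the derivative formulas (\ref{eq:new102})--(\ref{eq:new105}) by invoking the coarea identity of Lemma \ref{lem:absolutely continuous }. Throughout I take the outward unit normal on the regular part of $L_t = \partial \Omega_t$ to be $\nu = -\nabla u / |\nabla u|$ as in (\ref{eq:nu def}); since the singular part $\mathcal{S} \cap L_t$ has zero $\mathcal{H}^3$-measure by Lemma \ref{lem:Structure of S}, it contributes nothing to boundary integrals and the divergence theorem applies on the almost-$C^2$ domain $\Omega_t$.

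For (\ref{eq:Q(t)}), I would apply the divergence theorem directly to $Q^3(t)$: on $L_t$ one has $(-|\nabla u|^2 \nabla u) \cdot \nu = |\nabla u|^3$, so $-|S^3| Q^3(t) = \int_{L_t} |\nabla u|^3\, dl$, which is (\ref{eq:Q(t)}) after taking a cube root. For (\ref{eq:new101}), I would start from the divergence identity (\ref{eq:divergence structure}), which writes $2\sigma_2(A(\rho,u)) = -\partial_i[(-\Delta u\, \delta_{ij} + u_{ij} - \rho |\nabla u|^2 \delta_{ij}) u_j]$, then compute the resulting boundary flux in the adapted orthonormal frame $\{y^i\}$ defined before (\ref{eq:expression of A}), in which $u_4 = -|\nabla u|$ and $u_\alpha = 0$ for $\alpha = 1,2,3$. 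Using (\ref{eq:new100}) to substitute $\Delta u = -H|\nabla u| + u_{44}$, together with the identity $u_i u_j u_{ij} = |\nabla u|^2 u_{44}$, the $u_{44}$ contributions cancel cleanly and leave the boundary integrand $-H|\nabla u|^2 + \rho |\nabla u|^3$; multiplying by $-\tfrac{1}{2}$ produces (\ref{eq:new101}). This cancellation, made possible by the divergence structure special to $\sigma_2$, is the step most in need of care.

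For the derivative formulas, each integrand appearing in $N$, $V$, and $Q^3$ is bounded on $\overline{\Omega}_{t_0}$ for every $t_0 > \tau$ because $u \in C^2(\overline{\Omega})$, so Lemma \ref{lem:absolutely continuous } applies. Taking $\phi = \sigma_2(A(\rho,u))$, $\phi = 1$, and $\phi = {\rm div}(-|\nabla u|^2 \nabla u)$ yields (\ref{eq:new102}), (\ref{eq:new103}), and the preliminary identity $3 Q^2(t) Q'(t) = \frac{1}{|S^3|} \int_{L_t} \frac{{\rm div}(-|\nabla u|^2 \nabla u)}{|\nabla u|}\, dl$; inserting the pointwise expression ${\rm div}(-|\nabla u|^2 \nabla u) = |\nabla u|^2 (H|\nabla u| - 3 u_{44})$ from Lemma \ref{lem:divergence } and simplifying produces (\ref{eq:new105}). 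For (\ref{eq:new104}), I would differentiate $|S^3| P(t) = F(t) |\Omega_t| - \int_{\Omega_t} F(u)\, dx$ using Lemma \ref{lem:absolutely continuous } with $\phi = 1$ and $\phi = F(u)$; since $F(u) \equiv F(t)$ on $L_t$, the two boundary contributions $F(t) \frac{d}{dt} |\Omega_t|$ and $-\frac{d}{dt} \int_{\Omega_t} F(u)\, dx$ cancel exactly, leaving $|S^3| P'(t) = f(t) |\Omega_t|$, which is the stated formula.
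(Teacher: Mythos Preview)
Your proposal is correct and follows essentially the same route as the paper's (very terse) proof, which likewise invokes the generalized divergence theorem together with (\ref{eq:divergence structure}), (\ref{eq:nu def}), and (\ref{eq:new100}) for (\ref{eq:Q(t)})--(\ref{eq:new101}), and the coarea identity of Lemma \ref{lem:absolutely continuous } for (\ref{eq:new102})--(\ref{eq:new105}). One small remark: your computation yields $P'(t)=f(t)V(t)$, which is in fact what is used in the proof of Theorem \ref{thm:mass theorem}; the extra factor $1/|S^{3}|$ printed in (\ref{eq:new104}) appears to be a typo in the paper.
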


\begin{proof}
Since $\mathcal{H}^{3}(L_{t})<\infty$ for almost all $t$ in $\mathcal{I}$,
we can apply the generalized divergence theorem in $\Omega_{t}$ (see
\cite{evans1991measure}, Section 5.8, Theorem 1). Then, (\ref{eq:Q(t)})
follows from the generalized divergence theorem and (\ref{eq:nu def}).
(\ref{eq:new101}) is a direct application of the generalized divergence
theorem, (\ref{eq:divergence structure}), and (\ref{eq:new100}).
(\ref{eq:new102})-(\ref{eq:new105}) follow from the co-area formula
and direct computation. 
\end{proof}
We are now ready to define the monotonic quantity
\begin{equation}
M(t)=2N(t)Q(t)+\frac{\rho}{8}Q^{4}(t)-12P(t).\label{eq:MASS new def}
\end{equation}

\begin{rem}
The definition of $M(t)$ is a generalization of a similar construction
in \cite{Fang2018sigma_Y}. We choose a different scaling factor for
convenience.
\end{rem}

The following theorem has been proved for solutions to (\ref{eq:PDE system})
with $\rho=1$. For general $\rho$, we prove it for completeness
with minor changes.
\begin{thm}
\label{thm:mass theorem}Assume that on an open domain $\Omega$,
$u\in C^{2}(\Omega)$ satisfies 
\begin{equation}
\begin{cases}
\sigma_{2}(A(\rho,u))\geq f(u)>0,\\
A(\rho,u)\in\Gamma_{2}^{+}.
\end{cases}\label{eq:system for mass}
\end{equation}
If $\Omega\neq\mathbb{\mathbb{R}}^{4},$ we assume that $\Omega$
is bounded and $u|_{\partial\Omega}=\tau.$ Then for $t\in(\max_{\Omega}u,\tau)$,
$M(t)$ is non-decreasing and absolutely continuous with respect to
$t$.

Furthermore, if $M(t)$ is constant, then $u$ is a radial solution.
\end{thm}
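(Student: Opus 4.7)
The plan is to split the theorem into three independent statements and address each in turn: absolute continuity of $M$, the monotonicity $M'(t)\ge 0$, and the rigidity under $M(t)\equiv\mathrm{const}$.

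For absolute continuity, I would simply combine the outputs of Corollary \ref{cor:-N, P absolutely continuous and measure positive}, which already provides that $N$, $P$, $Q$ are absolutely continuous on every compact sub-interval of $\mathcal{I}$ and that $|Q|$ stays bounded and bounded away from zero there. Consequently $Q^3$, $Q^4$ and the product $NQ$ are absolutely continuous, and therefore $M = 2NQ + \tfrac{\rho}{8}Q^4 - 12 P$ is absolutely continuous.

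For the monotonicity, I would differentiate $M$ using Lemma \ref{lem:prep}:
\begin{align*}
M'(t) = 2 N'(t) Q(t) + \bigl(2N(t) + \tfrac{\rho}{2}Q^3(t)\bigr) Q'(t) - 12 P'(t),
\end{align*}
and collapse the middle coefficient via (\ref{eq:Q(t)}) and (\ref{eq:new101}) into
\[
2N(t) + \tfrac{\rho}{2}Q^3(t) \;=\; \frac{1}{|S^3|}\int_{L_t} |\nabla u|\,\sigma_1(\widetilde A)\,dl,
\]
where $\sigma_1(\widetilde A) = H|\nabla u| - \tfrac{3\rho}{2}|\nabla u|^2 > 0$ by (\ref{eq:sigma_1(tilde A) mean curvature is positive}). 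Substituting the remaining derivative formulas from Lemma \ref{lem:prep} writes $M'(t)$ as a combination of three boundary integrals over $L_t$---of $\sigma_2/|\nabla u|$, $|\nabla u|\sigma_1(\widetilde A)$, and $|\nabla u|(H|\nabla u|-3u_{44})$---together with the term $-12f(t)V(t)/|S^3|$. The hard part will be proving this combination is non-negative. The key pointwise input is the bound of Lemma \ref{lem:divergence },
\[
\sigma_2(A(\rho,u)) \;\le\; \tfrac13 \sigma_1(\widetilde A)\,(H|\nabla u| - 3 u_{44}),
\]
combined with the hypothesis $\sigma_2(A(\rho,u)) \ge f(t)$ on $L_t$. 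A carefully chosen Cauchy--Schwarz/Hölder pairing of the three integrands, designed so that the cubic root in $Q$ matches the integrand $|\nabla u|^3$, and followed by the isoperimetric inequality on $\mathbb{R}^4$ relating $V(t)$ to $|Q(t)|^3$, is intended to assemble the pieces into $M'(t) \ge 0$ almost everywhere.

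For the rigidity, if $M(t) \equiv \mathrm{const}$ then $M'(t) = 0$ a.e., forcing every inequality used above to be an equality. Equality in Lemma \ref{lem:divergence } requires $\widetilde A$ to be proportional to the identity---so each regular level set $L_t$ is totally umbilic---and also forces the mixed components $u_{i4}$ to vanish for $i = 1, 2, 3$, i.e.\ $\nabla u$ is an eigendirection of $\nabla^2 u$. Equality in the Cauchy--Schwarz step forces $|\nabla u|$, and hence $H$ and $u_{44}$, to be constant on each $L_t$. Equality in the isoperimetric inequality forces each $\Omega_t$ to be a round ball. Combining these, the family $\{L_t\}$ consists of concentric round spheres on which $|\nabla u|$ is constant, and one concludes that $u$ depends only on the distance to a common center, giving the radial symmetry claim. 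The main obstacle throughout is the second step: locating the correct Cauchy--Schwarz/Hölder pairing which aligns all three boundary integrals cleanly with $f(t)V(t)$, so that the pointwise comparison with $f(t)$ upgrades sharply to $M'(t) \ge 0$.
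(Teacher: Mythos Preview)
Your plan is exactly the paper's approach; the only missing ingredient is the specific H\"older/AM--GM combination you flag as the obstacle, so let me supply it. On $L_t$ the pointwise bound $f(t)\le\sigma_2(A(\rho,u))\le\sigma_1(\widetilde A)\bigl(\tfrac{1}{3}H|\nabla u|-u_{44}\bigr)$ gives
\[
\Bigl(\tfrac{f(t)}{|\nabla u|}\Bigr)^{2}\cdot\sigma_1(\widetilde A)\,|\nabla u|\cdot\Bigl(\tfrac{1}{3}H|\nabla u|-u_{44}\Bigr)|\nabla u|\;\ge\;f(t)^{3},
\]
so four-fold H\"older (exponents $4,4,4,4$) and then the isoperimetric inequality $|L_t|^4\ge 4^{3}|S^3|\,|\Omega_t|^{3}$ yield
\[
N'^{2}\Bigl(2N+\tfrac{\rho}{2}Q^{3}\Bigr)Q^{2}Q'\;\ge\;\Bigl(\tfrac{1}{|S^3|}\!\int_{L_t}\!\tfrac{f(t)}{|\nabla u|}\Bigr)^{2}\Bigl(2N+\tfrac{\rho}{2}Q^{3}\Bigr)Q^{2}Q'\;\ge\;f(t)^{3}\,\tfrac{|L_t|^{4}}{|S^{3}|^{4}}\;\ge\;4^{3}P'^{3}.
\]
All three factors $N'Q$, $N'Q$, $(2N+\tfrac{\rho}{2}Q^{3})Q'$ on the left are nonnegative (since $N'<0$, $Q<0$, $Q'>0$), so AM--GM on the cube root gives
\[
4P'\;\le\;\tfrac{2}{3}\,N'Q+\tfrac{1}{3}\Bigl(2N+\tfrac{\rho}{2}Q^{3}\Bigr)Q'\;=\;\tfrac{1}{3}\Bigl(2NQ+\tfrac{\rho}{8}Q^{4}\Bigr)',
\]
which is exactly $M'(t)\ge0$. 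Your rigidity sketch is correct and in fact more detailed than the paper's: equality in the H\"older step forces the three integrands to be proportional (hence $|\nabla u|$ constant on $L_t$), equality in Lemma~\ref{lem:divergence } forces umbilicity and $u_{i4}=0$, and equality in the isoperimetric step forces $L_t$ to be a round sphere; running this over all $t$ gives the radial conclusion.
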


\begin{proof}
$M(t)$ is absolutely continuous by Corollary \ref{cor:-N, P absolutely continuous and measure positive}.
From Lemma \ref{lem:divergence }, it holds on $L_{t}$ $\mathcal{H}^{3}$-$a.e.$
that 
\begin{align*}
f(u)\leq\sigma_{2}(A(\rho,u)) & \leq\sigma_{1}(\tilde{A})(\frac{1}{3}H|\nabla u|-u_{44}).
\end{align*}
Using the H\"older inequality and the isoperimetric inequality, 
\begin{align}
 & \left(\frac{1}{|S^{3}|}\int_{L_{t}}\frac{f(t)}{|\nabla u|}dl\right)^{2}\cdot\left(\frac{1}{|S^{3}|}\int_{L_{t}}\sigma_{1}(\tilde{A})|\nabla u|dl\right)\cdot\left(\frac{1}{|S^{3}|}\int_{L_{t}}(\frac{H}{3}|\nabla u|-u_{44})|\nabla u|dl\right)\nonumber \\
 & \ge f(t)^{3}|L_{t}|^{4}\frac{1}{|S^{3}|^{4}}\ \ge f(t)^{3}4^{3}|\Omega_{t}|^{3}\frac{1}{|S^{3}|^{3}}.\label{eq:107}
\end{align}

By Lemma \ref{lem:prep},
\[
\frac{1}{|S^{3}|}\int_{L_{t}}\sigma_{1}(\tilde{A})|\nabla u|dl=2N(t)+\frac{\rho}{2}Q^{3}(t).
\]
We use Lemma \ref{lem:prep} to rewrite (\ref{eq:107}) as
\[
4^{3}P'^{3}(t)\leq N'^{2}(t)(2N(t)+\frac{\rho}{2}Q^{3}(t))Q'(t)Q^{2}
\]
which, by using the inequality of arithmetic and geometric means,
leads to
\[
4P'\leq\frac{2}{3}(N'Q)+\frac{1}{3}(2N+\frac{\rho}{2}Q^{3})Q'=\frac{1}{3}(2NQ+\frac{\rho}{8}Q^{4})'.
\]
Checking with the definition of $M(t),$ we have proved that $M'(t)\geq0\ a.e.$
$t\in\mathcal{I}$.

If $M'(t')$ is well defined for some $t'$ and $M'(t')=0$, then
all inequalities are identities in the previous computations, implying
the level set $L_{t'}$ is a round 3-sphere. Thus, the solution $u$
is radial if $M(t)$ is constant for all $t$.
\end{proof}
\begin{rem}
By rescaling (section \ref{sec:Preliminary-discussion}, Remark \ref{rem:+/-cone})
and Theorem \ref{thm:mass theorem}, it is easy to recover different
types of mass inequalities that appeared in \cite{Fang2018sigma_Y,Fang2020AP}.
Rather than the solutions of (\ref{eq:PDE}), our construction in
(\ref{eq:mass for supersolution in mass-theorem}) works for super-solutions
satisfying (\ref{eq:inequality}). Furthermore, the monotonicity of
$M(t)$ is independent of $\rho$ and it can be applied to 2-Hessian
equations.
\end{rem}

Finally, we remark that similar to the $\sigma_{k}$-Yamabe problem
in conformal geometry, a Pohozaev type identity regarding our problem
can be established. See Theorem \ref{thm:pohozaev}. We include a
proof in the Appendix.

We apply Theorem \ref{thm:pohozaev} to revise $M(t),$ making it
dependent only on quantities on $L_{t}$:
\begin{cor}
\label{cor:Pohozaev for K=00003D1}Let $u$ be a solution to 
\begin{equation}
\begin{cases}
\sigma_{2}(A(\rho,u))=f(u)>0,\\
A(\rho,u)\in\Gamma_{2}^{+}.
\end{cases}\label{eq:system for mass-1}
\end{equation}
Assume that $\partial\Omega_{t}$ is smooth and $u\in C^{2}(\bar{\Omega}_{t})$;
or $\partial\Omega_{t}$ is almost $C^{2}$ and $u\in C^{3}(\Omega_{t})\cap C^{2}(\bar{\Omega}_{t})$.
Then for almost all $t$, we have
\[
\int_{\Omega_{t}}8[F(u(x))-F(t)]dx=\int_{\partial\Omega_{t}}\left(-\frac{3}{4}\rho|\nabla u|^{4}\langle x,\nu\rangle+\frac{2}{3}H|\nabla u|^{3}\langle x,\nu\rangle\right)dl.
\]
\begin{align}
M(t) & =\frac{9\rho}{8}\left(Q^{4}(t)+\frac{1}{|S^{3}|}\int_{L_{t}}|\nabla u|^{3}\langle x,\nabla u\rangle dl\right)\label{eq:masse expression}\\
 & \quad+\frac{1}{|S^{3}|}\int_{L_{t}}H(x)|\nabla u|^{2}\left(Q(t)-\langle x,\nabla u\rangle\right)dl.\nonumber 
\end{align}
Notice that all terms appeared in $M(t)$ can be expressed as some
integrals on $L_{t.}$ 
\end{cor}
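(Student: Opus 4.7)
The proof will have two parts: first establishing the integral identity, and then substituting it into the definition of $M(t)$.

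For the first identity, the plan is to invoke the Pohozaev-type identity of Theorem \ref{thm:pohozaev} (stated in the appendix) specialized to equation \eqref{eq:system for mass-1}. Since $u$ satisfies $\sigma_2(A(\rho,u))=f(u)=F'(u)$ on $\Omega_t$ with the prescribed boundary regularity, the general Pohozaev identity should produce a boundary integral whose structure matches the divergence expression \eqref{eq:divergence structure} plus a zeroth-order term of the form $\mathrm{div}(xF(u))-4F(u)$. The interior contributions from $F(u)$ will be rewritten via the divergence theorem to produce $\int_{\Omega_t}4F(u)\,dx$, and the constant $F(t)$ on the boundary can be moved through $\int_{\Omega_t}4F(t)\,dx=\int_{\partial\Omega_t}F(t)\langle x,\nu\rangle\,dl$. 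On the regular boundary, $\nabla u=-|\nabla u|\nu$ from \eqref{eq:nu def} and the expression \eqref{eq:expression of A} together with \eqref{eq:new100} allow one to replace normal derivatives of $u$ by $H|\nabla u|$ and $|\nabla u|^2$. Collecting these pieces yields the claimed identity with coefficients $-\tfrac{3}{4}\rho$ and $\tfrac{2}{3}H$.

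For the second claim, the plan is purely algebraic rewriting of \eqref{eq:MASS new def}. Starting from
\[
M(t)=2N(t)Q(t)+\frac{\rho}{8}Q^4(t)-12P(t),
\]
I substitute the expression for $N(t)$ from \eqref{eq:new101} and use $\tfrac{1}{|S^3|}\int_{L_t}|\nabla u|^3\,dl=-Q^3(t)$ from \eqref{eq:Q(t)} to obtain
\[
2N(t)Q(t)=\frac{Q(t)}{|S^3|}\int_{L_t}H|\nabla u|^2\,dl+\rho Q^4(t).
\]
Then I apply the first part of the corollary to rewrite
\[
-12P(t)=\frac{12}{|S^3|}\int_{\Omega_t}[F(u(x))-F(t)]\,dx=\frac{3}{2|S^3|}\int_{\partial\Omega_t}\Bigl(-\tfrac{3}{4}\rho|\nabla u|^4+\tfrac{2}{3}H|\nabla u|^3\Bigr)\langle x,\nu\rangle\,dl.
\]
Using $\langle x,\nu\rangle=-\langle x,\nabla u\rangle/|\nabla u|$ on $L_t\setminus\mathcal{S}$, this becomes
\[
-12P(t)=\frac{1}{|S^3|}\int_{L_t}\Bigl(\tfrac{9\rho}{8}|\nabla u|^3\langle x,\nabla u\rangle-H|\nabla u|^2\langle x,\nabla u\rangle\Bigr)\,dl.
\]

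Finally, I add the three pieces: combining the two $\rho Q^4(t)$ contributions gives coefficient $\rho+\tfrac{\rho}{8}=\tfrac{9\rho}{8}$ in front of $Q^4(t)$, the $|\nabla u|^3\langle x,\nabla u\rangle$ term carries the same $\tfrac{9\rho}{8}$, and the $H|\nabla u|^2$ pieces combine as $\tfrac{1}{|S^3|}\int_{L_t}H|\nabla u|^2\bigl(Q(t)-\langle x,\nabla u\rangle\bigr)\,dl$, yielding \eqref{eq:masse expression}. The only substantive difficulty lies in verifying the Pohozaev identity with the given weak regularity at the boundary (almost $C^2$ with $u\in C^3(\Omega_t)\cap C^2(\bar\Omega_t)$), since one wants to carry out the standard integration-by-parts argument on $\Omega_t\setminus U_\epsilon(\mathcal{S})$ and let $\epsilon\to 0$, controlling the boundary terms near the singular set $\mathcal{S}$ using Lemma \ref{lem:Structure of S}; this is exactly what Theorem \ref{thm:pohozaev} in the appendix is set up to handle, so I would defer that step to the appendix and treat the present corollary as a bookkeeping consequence.
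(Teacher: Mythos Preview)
Your proposal is correct and follows essentially the same approach as the paper. The paper does not spell out a separate proof of the corollary, treating it as an immediate consequence of Theorem~\ref{thm:pohozaev} with $K\equiv 1$ (and the normalization $F(\tau)=0$ replaced by the shift $F(u)-F(t)$); the algebraic regrouping you describe for the second formula is exactly the computation the paper carries out in Section~\ref{sec:Proof-of-main theorem} (equation~\eqref{eq:M(t_i)}), using \eqref{eq:new101}, \eqref{eq:Q(t)}, and $\nu=-\nabla u/|\nabla u|$ in the same way.
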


\section{Dirichlet problem for balls \label{sec:Dirichlet-problem-for}}

In this section, we prove the symmetry of the solutions to the following
Dirichlet problem (\ref{eq:Dirichlet problem}) by the mass formula
and the Pohozaev identity.

By Remark \ref{rem:+/-cone}, for simplicity, we may assume that $\Omega=\{|x|<1\}\subset\mathbb{R}^{4}$
and $u$ is a $C^{2}(\overline{\Omega})$ solution to 
\begin{equation}
\begin{cases}
\sigma_{2}(A(\rho,u))=f(u)>0, & \Omega,\\
A(\rho,u)\in\Gamma_{2}^{+}, & \ \\
u=\tau, & \partial\Omega.
\end{cases}\label{eq:Dirichlet problem}
\end{equation}
By Theorem \ref{thm:pohozaev} and Theorem \ref{thm:mass theorem},
we prove the following theorem.
\begin{thm}
If $\rho\le0$, then every $C^{2}$ solution $u$ to equation (\ref{eq:Dirichlet problem})
with integrable $f(u)$ is radial. 
\end{thm}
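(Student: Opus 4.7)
The plan is to show that the quasi-local mass $M(t)$ of Theorem~\ref{thm:mass theorem} is identically zero on $\mathcal{I}=(\tau,\tau')$, where $\tau':=\max_\Omega u>\tau$, and then to invoke the rigidity clause of that theorem. Since $M$ is non-decreasing on $\mathcal I$, it suffices to prove $\lim_{t\to\tau'^-} M(t)=0$ and $\lim_{t\to\tau^+} M(t)\ge 0$.

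For the limit at the maximum, I would work directly with the definition (\ref{eq:MASS new def}). Because $u\in C^2(\bar\Omega)$, the integrands in $N$, $P$, and $Q^3$ are uniformly bounded on $\bar\Omega$, while $|\Omega_t|\to 0$ as $t\to\tau'^-$; hence $N$, $P$, and $Q$ each tend to zero and so $M(t)\to 0$.

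For the limit at the boundary I would use Corollary~\ref{cor:Pohozaev for K=00003D1}: the domain $\Omega$ is smooth and $u\in C^2(\bar\Omega)$, so formula (\ref{eq:masse expression}) holds for a.e.\ $t\in\mathcal I$, and I would pass to the limit $t\to\tau^+$. On $\partial\Omega=S^3$, the Dirichlet condition together with $u$ attaining its minimum on $\partial\Omega$ (Remark~\ref{rem:signconvention}) yields $\nabla u=-|\nabla u|\,x$ and $\langle x,\nabla u\rangle=-|\nabla u|$; the mean curvature of the unit sphere, computed with respect to the inward normal $-\nu$ as in the paper's convention, is $H=3$. Writing $\bar g_k:=\frac{1}{|S^3|}\int_{\partial\Omega}|\nabla u|^k\,dl$ so that $Q(\tau^+)=-\bar g_3^{1/3}$, direct substitution into (\ref{eq:masse expression}) produces
\[
\lim_{t\to\tau^+} M(t) \;=\; \frac{9\rho}{8}\bigl(\bar g_3^{4/3}-\bar g_4\bigr) \;+\; 3\bigl(\bar g_3-\bar g_3^{1/3}\bar g_2\bigr).
\]
Both parentheses are controlled by the monotonicity of $L^p$ means on $S^3$: $\bar g_3^{1/3}\le \bar g_4^{1/4}$ gives $\bar g_3^{4/3}\le\bar g_4$, and $\bar g_2^{1/2}\le\bar g_3^{1/3}$ gives $\bar g_3\ge\bar g_3^{1/3}\bar g_2$. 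When $\rho\le 0$, both summands are non-negative, so $\lim_{t\to\tau^+} M(t)\ge 0$. Combining with the monotonicity of $M$ and the vanishing limit at $\tau'$ yields $M\equiv 0$ on $\mathcal I$, and the rigidity clause of Theorem~\ref{thm:mass theorem} then forces $u$ to be radial.

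The main obstacle is extracting the boundary expression cleanly: one must (i) justify passing to the limit $t\to\tau^+$ in (\ref{eq:masse expression}), which holds only for a.e.\ $t$, using the smoothness of $\partial\Omega$ and the $C^2$ regularity of $u$ up to the boundary; (ii) sort out the sign conventions for $\nu$ and $H$ so that $H=3$ and $\langle x,\nabla u\rangle=-|\nabla u|$ on $\partial\Omega$; and (iii) recognize that the Pohozaev-derived mass decomposes into two Jensen/$L^p$-mean terms whose signs are controlled precisely by the assumption $\rho\le 0$ — this is exactly where the hypothesis is used, since for $\rho>0$ the first summand would have the wrong sign.
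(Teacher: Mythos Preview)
Your proposal is correct and follows essentially the same route as the paper: evaluate $M$ at the two endpoints using the Pohozaev-based expression (\ref{eq:masse expression}) on $\partial\Omega=S^{3}$, reduce to a comparison of $L^{p}$-means of $|\nabla u|$, and invoke the monotonicity and rigidity of Theorem~\ref{thm:mass theorem}. Your expression $\frac{9\rho}{8}(\bar g_{3}^{4/3}-\bar g_{4})+3(\bar g_{3}-\bar g_{3}^{1/3}\bar g_{2})$ agrees with the paper's formula after substituting $Q=-\bar g_{3}^{1/3}$, and your sign analysis via power-mean monotonicity is the same; if anything, you are slightly more careful than the paper in treating the endpoints $\tau,\tau'$ as limits rather than evaluating $M$ there directly.
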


\begin{proof}
Let $\tau'=\max_{\Omega}u.$ By the maximum principle, it is clear
that $\tau'\geq\tau$, and $M(\tau')=0.$ 

We compute $M(\tau)$. Note that on $\partial\Omega=S^{3}$, $H=3$
and $\nu=x$. By (\ref{eq:masse expression}) and (\ref{eq:MASS new def})
\begin{align*}
M(\tau) & =\frac{\rho}{8}Q^{4}+Q\frac{1}{|S^{3}|}\int_{L_{t}}(3|\nabla u|^{2}-\rho|\nabla u|^{3})-\frac{3}{2}\int_{S^{3}}\left(\frac{3}{4}\rho|\nabla u|^{4}-2|\nabla u|^{3}\right)dl\\
 & =\text{\ensuremath{\frac{9\rho}{8}Q^{4}+}}\frac{3}{|S^{3}|}\left(\int_{L(t)}|\nabla u|^{2}\right)Q-\frac{1}{|S^{3}|}\int_{\partial\Omega}\left(\frac{9}{8}\rho|\nabla u|^{4}-3|\nabla u|^{3}\right)dl\\
 & =\rho\frac{9}{8}\left[Q^{4}-\frac{1}{|S^{3}|}\int_{\partial\Omega}|\nabla u|^{4}\right]+3Q\left[\frac{1}{|S^{3}|}\int_{\partial\Omega}|\nabla u|^{2}dl-Q^{2}\right].
\end{align*}
It is easy to see that $Q(\tau)\leq0$,
\begin{align*}
Q^{4}(\tau)=\left(\frac{1}{|S^{3}|}\int_{\partial\Omega}|\nabla u|^{3}dl\right)^{\frac{4}{3}} & \leq\frac{1}{|S^{3}|}\int_{\partial\Omega}|\nabla u|^{4}dl,
\end{align*}
and 
\[
\frac{1}{|S^{3}|}\int_{\partial\Omega}|\nabla u|^{2}dl\leq\left(\frac{1}{|S^{3}|}\int_{\partial\Omega}|\nabla u|^{3}\right)^{\frac{2}{3}}.
\]
We conclude that $M(\tau)\ge0=M(\tau'$). By Theorem \ref{thm:mass theorem},
we obtain $M(t)\equiv0$ for all $t\in[\tau,\tau']$ and hence $u$
is rotationally symmetric. 
\end{proof}
\begin{rem}
By Remark \ref{rem:+/-cone}, same results hold for $A(\rho,u)\in\Gamma_{2}^{-}$
and $\rho\geq0.$

We point out here that on annular domains, under some regularity assumptions
of $f$, Wang-Bao in \cite{wang2015overdetermined} proved the radial
symmetric property of solutions to $k$-Hessian equation by the moving
plane method. 
\end{rem}

\section{\textcolor{black}{Bounded Integral of} $\sigma_{2}(A(\rho,u))$ \label{sec:Bounded-Integral}}

In this section, we establish an upper bound of $\int_{\mathbb{R}^{4}}\sigma_{2}(A(\rho,u))dx$,
assuming $A(\rho,u)$ in $\Gamma_{2}^{+}$. The argument is modified
from a similar one of Chang-Gurksy-Yang \cite{CGY3chang2003entire}.
We note that the positivity of $\rho$ here is essential. The main
result of this section is the following
\begin{thm}
\label{thm:Integral finit}If $A(\rho,u)\in\Gamma_{2}^{+}$, then
\[
\int_{\mathbb{R}^{4}}\sigma_{2}(A(\rho,u))dx<\frac{C}{\rho^{2}}<\infty,
\]
for some constant $C$.
\end{thm}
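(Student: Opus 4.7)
The plan is to reduce the problem to the normalized case $\rho = 1$ via the scaling identity of Remark \ref{rem:+/-cone}, and then adapt the Chang-Gursky-Yang a priori integral bound for the $\sigma_2$-Schouten equation on $\mathbb{R}^4$. Setting $v := \rho u$, a direct computation gives $A(\rho, u) = \rho^{-1} A(1, v)$, hence $\sigma_2(A(\rho, u)) = \rho^{-2}\,\sigma_2(A(1, v))$, and the positive cone condition $A(\rho, u)\in \Gamma_2^+$ is equivalent to $A(1,v)\in\Gamma_2^+$ when $\rho>0$. Consequently,
\[
\int_{\mathbb{R}^4} \sigma_2(A(\rho, u))\,dx \;=\; \frac{1}{\rho^2}\int_{\mathbb{R}^4} \sigma_2(A(1, v))\,dx,
\]
and it suffices to exhibit a universal constant $C$ bounding the right-hand side whenever $A(1, v)\in\Gamma_2^+$.

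For the normalized integral, I would start from the divergence identity (\ref{eq:divergence structure}),
\[
\sigma_2(A(1,v)) \;=\; -\tfrac{1}{2}\,\partial_i\bigl[(-\Delta v\,\delta_{ij} + v_{ij} - |\nabla v|^2\delta_{ij})\,v_j\bigr],
\]
and rewrite $\int_{B_R}\sigma_2(A(1,v))\,dx$ as a boundary integral over $\partial B_R$. The claim then reduces to showing that this boundary integral stays bounded uniformly in $R$, so that letting $R \to \infty$ yields the desired bound.

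To control the boundary integrand I would use Lemma \ref{lem:Li-Nguyen Monotonicity formula }: the function $\underline{v}\circ\exp$ is concave on $\mathbb{R}$, which forces $\underline{v}(r) \le -a\log r + b$ for some $a > 0$ and $r$ large (the alternative that $\underline{v}$ is bounded would make the concave function constant, contradicting the existence of an interior maximum guaranteed by Remark \ref{rem:signconvention}). This gives a uniform oscillation bound for $v$ on dyadic annuli. Feeding this oscillation bound into the local $C^1$ and $C^2$ estimates of Chen \cite{Chen2005LocalEF}, which apply precisely because we are in $\Gamma_2^+$ with $\rho > 0$ and which are independent of the lower bound of $v$, one obtains $|\nabla v|(x) \le C/|x|$ and $|\nabla^2 v|(x) \le C/|x|^2$ for $|x|$ large. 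Each term in the boundary integrand is then pointwise $O(R^{-3})$, and multiplying by $|\partial B_R| = |S^3|R^3$ gives a boundary contribution of $O(1)$, uniformly in $R$.

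The main obstacle I expect is justifying that the constants in Chen's local $C^1, C^2$ estimates are genuinely uniform as the shells $\{|x|\sim R\}$ recede to infinity; this is where the positivity $\rho>0$ is indispensable, since Chen's estimates for the positive cone depend only on the oscillation (itself controlled by Li-Nguyen), and fail at $\rho=0$. Once this uniformity is in place, the resulting bound $\int_{\mathbb{R}^4}\sigma_2(A(1,v))\,dx \le C$ scales back via the first paragraph to $C/\rho^2$, completing the proof.
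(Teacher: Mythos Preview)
There is a genuine gap: your use of Chen's local $C^1,C^2$ estimates to obtain $|\nabla v|\le C/|x|$ and $|\nabla^2 v|\le C/|x|^2$ is circular. Chen's estimate (Theorem~\ref{thm:Gradient and second order estimate}) is stated for \emph{solutions} of $\sigma_2(A(\rho,u))=f(u)$ and its bound carries the term $c_{\sup}(f)=\sup(|f|^{1/2}+|(f^{1/2})'|+|(f^{1/2})''|)$. To make $c_{\sup}(f)$ of size $O(|x|^{-2})$ on shells $\{|x|\sim R\}$ you need $f(u(x))\to 0$, which in the paper is obtained only \emph{after} Theorem~\ref{thm:Integral finit}, via the $\epsilon$-regularity argument of Theorem~\ref{thm:Asyp upperbound Guan Wang } and Corollary~\ref{cor:Guan-Wang}. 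So the decay $|\nabla u|\le C/|x|$ is a \emph{consequence} of the integral bound you are trying to prove, not an input available to prove it. Moreover, Theorem~\ref{thm:Integral finit} as stated assumes only the cone condition $A(\rho,u)\in\Gamma_2^+$, with no equation at all; under that bare hypothesis Chen's estimate is simply not applicable. (Your side claim that Li--Nguyen concavity forces a strictly negative slope $a>0$ is also not justified: concavity plus monotonicity allows slope $0$, and in the paper the strict inequality $\alpha<-1$ again relies on Corollary~\ref{cor:Guan-Wang}, hence on Theorem~\ref{thm:Integral finit}.)

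The paper's proof avoids all of this with an elementary cutoff argument that uses only the cone condition. Multiply the divergence identity (\ref{eq:divergence structure}) by $\eta^4$ with $\eta$ a standard cutoff on $B_R\subset B_{2R}$ and integrate by parts (Lemma~\ref{lem: computation of int sigm_2}). The positivity $\sigma_1(A(\rho,u))=-\Delta u-\rho|\nabla u|^2>0$ alone gives, after one more integration by parts, the uniform bound $\int|\nabla u|^2\eta^2\,dx\le \frac{4}{\rho^2}\int|\nabla\eta|^2\,dx\le C R^2/\rho^2$, which controls all the cutoff-error terms by $C/\rho^2$ independently of $R$ (Lemma~\ref{lem: integral gradient estimate  }). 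Finally the remaining term $\rho\int\eta^4\,\mathrm{div}(|\nabla u|^2\nabla u)\,dx$ has a sign by Lemma~\ref{lem:divergence } and can be dropped. Letting $R\to\infty$ finishes the proof. No Chen estimate, no Li--Nguyen monotonicity, and no information about any right-hand side $f$ is needed.
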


The proof uses the divergence structure of $\sigma_{2}(A(\rho,u))$
to perform integration by parts with a cut-off function. Let $\eta$
to be a smooth function such that
\[
\eta(x)=\begin{cases}
1 & x\in B_{R},\\
0 & x\in\mathbb{R}^{4}\backslash B_{2R},
\end{cases}
\]
and 
\begin{equation}
|\nabla\eta|^{2}+|\nabla^{2}\eta|<\frac{C_{0}}{R^{2}},\label{eq:cutoff eta gradient estimate}
\end{equation}
for some fixed constant $C_{0}.$ We first state two lemmas. 
\begin{lem}
It holds \label{lem: computation of int sigm_2}
\begin{align*}
2\int_{\mathbb{R}^{4}}\sigma_{2}(A(\rho,u))\eta^{4}dx= & \int_{\mathbb{R}^{4}}\left(\left(\nabla_{kj}^{2}\eta^{4}\right)u_{k}u_{j}-|\nabla u|^{2}\Delta\eta^{4}\right)dx\\
 & +\int_{\mathbb{R}^{4}}\rho\eta^{4}{\rm div}(|\nabla u|^{2}\nabla u)dx.
\end{align*}
\end{lem}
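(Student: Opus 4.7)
The plan is to exploit the divergence structure \eqref{eq:divergence structure} of $\sigma_{2}(A(\rho,u))$ and integrate by parts against the compactly supported cutoff $\eta^{4}$. Since $\eta$ vanishes outside $B_{2R}$, no boundary terms appear at any step. Concretely, I would rewrite \eqref{eq:divergence structure} as
\[
2\sigma_{2}(A(\rho,u))=-\partial_{i}\bigl[(-\Delta u\,\delta_{ij}+u_{ij}-\rho|\nabla u|^{2}\delta_{ij})u_{j}\bigr],
\]
multiply by $\eta^{4}$, integrate over $\mathbb{R}^{4}$, and move the outer $\partial_{i}$ onto $\eta^{4}$. This produces three pieces,
\[
2\int\sigma_{2}(A(\rho,u))\eta^{4}dx=\mathrm{(I)}+\mathrm{(II)}+\mathrm{(III)},
\]
with $\mathrm{(I)}=-\int\partial_{i}\eta^{4}\,\Delta u\,u_{i}dx$, $\mathrm{(II)}=\int\partial_{i}\eta^{4}\,u_{ij}u_{j}dx$, and $\mathrm{(III)}=-\rho\int\partial_{i}\eta^{4}\,|\nabla u|^{2}u_{i}dx$.

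Next I would simplify each piece separately. For $\mathrm{(II)}$, the identity $u_{ij}u_{j}=\tfrac{1}{2}\partial_{i}|\nabla u|^{2}$ followed by one more integration by parts yields $\mathrm{(II)}=-\tfrac{1}{2}\int\Delta\eta^{4}\,|\nabla u|^{2}dx$. For $\mathrm{(III)}$, one integration by parts directly gives $\mathrm{(III)}=\rho\int\eta^{4}\,{\rm div}(|\nabla u|^{2}\nabla u)dx$. For $\mathrm{(I)}$, I would write $\Delta u=\partial_{j}u_{j}$ and integrate by parts once more in $j$, producing
\[
\mathrm{(I)}=\int(\partial_{ij}^{2}\eta^{4})u_{i}u_{j}dx+\int\partial_{i}\eta^{4}\,u_{ij}u_{j}dx,
\]
whose second summand is again $-\tfrac{1}{2}\int\Delta\eta^{4}|\nabla u|^{2}dx$ by the identity used in $\mathrm{(II)}$.

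Adding the three contributions, the two copies of $-\tfrac{1}{2}\int\Delta\eta^{4}|\nabla u|^{2}dx$ combine into $-\int\Delta\eta^{4}|\nabla u|^{2}dx$, the Hessian term $\int(\nabla_{kj}^{2}\eta^{4})u_{k}u_{j}dx$ survives, and the $\rho$ contribution matches exactly, giving the claimed identity. There is no serious obstacle here; the argument is essentially bookkeeping for two integrations by parts and one application of $u_{ij}u_{j}=\tfrac{1}{2}\partial_{i}|\nabla u|^{2}$. The only care needed is sign-tracking and confirming that the compact support of $\eta$ eliminates every boundary term, so that the formula holds for arbitrary $C^{2}$ functions $u$ on $\mathbb{R}^{4}$ without any assumption on decay at infinity.
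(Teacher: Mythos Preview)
Your proposal is correct and follows essentially the same route as the paper: both use the divergence structure \eqref{eq:divergence structure}, integrate by parts against $\eta^{4}$, and reduce the Hessian piece via $u_{ij}u_{j}=\tfrac{1}{2}\partial_{i}|\nabla u|^{2}$. The only cosmetic difference is that the paper keeps the $\rho\,{\rm div}(|\nabla u|^{2}\nabla u)$ term intact from the start rather than integrating it by parts and back again, and organizes the remaining two pieces as a single expression $\int\nabla_{j}\eta^{4}(u_{ij}u_{i}-u_{kk}u_{j})dx$ instead of your (I)$+$(II) split.
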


\begin{proof}
Recall the divergence structure of $\sigma_{2}(A(\rho,u))$,
\begin{equation}
\sigma_{2}(A(\rho,u))=-\frac{1}{2}\partial_{j}\left(\left(-\Delta u\delta_{ij}+u_{ij}\right)u_{i}-\rho|\nabla u|^{2}u_{j}\right).\label{eq:divstructure}
\end{equation}
We compute 
\begin{align}
-\int_{\mathbb{R}^{4}}\partial_{j}\left(\left(-\Delta u\delta_{ij}+u_{ij}\right)u_{i}\right)\eta^{4}dx & =\int_{\mathbb{R}^{4}}\left(-\Delta u\delta_{ij}+u_{ij}\right)u_{i}\nabla_{j}\eta^{4}dx.\label{eq:integration by parts}
\end{align}
Note 
\begin{align*}
\int_{\mathbb{R}^{4}}\nabla_{j}\eta^{4}u_{kk}u_{j}dx & =-\int_{\mathbb{R}^{4}}\left(\nabla_{kj}^{2}\eta^{4}u_{k}u_{j}+\nabla_{j}\eta^{4}u_{k}u_{jk}\right)dx\\
 & =-\int_{\mathbb{R}^{4}}\nabla_{kj}^{2}\eta^{4}u_{k}u_{j}dx+\int_{\mathbb{R}^{4}}\Delta\eta^{4}u_{k}u_{k}dx+\int_{\mathbb{R}^{4}}\nabla_{j}\eta^{4}u_{kj}u_{k}dx.
\end{align*}
Hence,
\begin{equation}
\int_{\mathbb{R}^{4}}\nabla_{j}\eta^{4}\left(u_{ij}u_{i}-u_{kk}u_{j}\right)dx=\int_{\mathbb{R}^{4}}\nabla_{kj}^{2}\eta^{4}u_{k}u_{j}dx-\int_{\mathbb{R}^{4}}\Delta\eta^{4}u_{k}u_{k}dx.\label{eq:inequality estimate}
\end{equation}
Thus, by (\ref{eq:divstructure}-\ref{eq:inequality estimate}), 
\begin{align*}
2\int_{\mathbb{R}^{4}}\eta^{4}\sigma_{2}(A(\rho,u)) & =\int_{\mathbb{R}^{4}}\left(\nabla_{kj}^{2}\eta^{4}u_{k}u_{j}-|\nabla u|^{2}\Delta\eta^{4}\right)dx\\
 & \quad+\int_{\mathbb{R}^{4}}\rho\eta^{4}{\rm div}(|\nabla u|^{2}\nabla u)dx.
\end{align*}
\end{proof}
\begin{lem}
Notations as above. We have \label{lem: integral gradient estimate  }
\[
\int_{\mathbb{R}^{4}}\left|\nabla_{kj}^{2}\eta^{4}u_{k}u_{j}-|\nabla u|^{2}\Delta\eta^{4}\right|dx\leq\frac{C_{1}}{\rho^{2}},
\]
where $C_{1}$ is a constant independent of $R$ and $u$.
\end{lem}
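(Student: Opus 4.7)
The plan is to exploit the cone condition $A(\rho,u)\in\Gamma_{2}^{+}$, which via $\sigma_{1}(A(\rho,u))=-\Delta u-\rho|\nabla u|^{2}>0$ gives the pointwise inequality $-\Delta u>\rho|\nabla u|^{2}\ge 0$. This is the only structural information about $u$ available at this stage, and it will be used to trade a weighted $L^{2}$-norm of $\nabla u$ for an integral of $-\Delta u$ against a suitable cutoff via integration by parts. The positivity of $\rho$ is essential here; if $\rho\le 0$ the cone inequality yields no gradient control at all.

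The first key step will be a weighted gradient bound of the form
\[
\int_{\mathbb{R}^{4}}\eta^{2}|\nabla u|^{2}\,dx\le \frac{CR^{2}}{\rho^{2}}.
\]
I would multiply the cone inequality by $\eta^{2}$ and integrate,
\[
\rho\int\eta^{2}|\nabla u|^{2}\,dx\le \int\eta^{2}(-\Delta u)\,dx=2\int \eta\,\nabla\eta\cdot\nabla u\,dx,
\]
where no boundary terms arise since $\eta$ has compact support. By Cauchy--Schwarz,
\[
\rho\int\eta^{2}|\nabla u|^{2}\le 2\Bigl(\int\eta^{2}|\nabla u|^{2}\Bigr)^{1/2}\Bigl(\int|\nabla\eta|^{2}\Bigr)^{1/2}.
\]
Since $|\nabla\eta|^{2}\le C_{0}/R^{2}$ and $\nabla\eta$ is supported in $B_{2R}$, which has volume of order $R^{4}$, we have $\int|\nabla\eta|^{2}\le CR^{2}$. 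Dividing by the first factor on the right and squaring then produces the claimed bound.

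The second step is the pointwise comparison
\[
|\nabla^{2}\eta^{4}|+|\Delta\eta^{4}|\le \frac{C}{R^{2}}\,\eta^{2},
\]
which follows from $\nabla_{kj}^{2}\eta^{4}=12\eta^{2}\eta_{k}\eta_{j}+4\eta^{3}\nabla_{kj}^{2}\eta$, the hypothesis (\ref{eq:cutoff eta gradient estimate}), and $0\le\eta\le 1$ (the cubic factor $\eta^{3}$ is dominated by $\eta^{2}$). Combining the two estimates, the integrand of the lemma is dominated by $(C/R^{2})\,\eta^{2}|\nabla u|^{2}$, so the two powers of $R$ cancel and
\[
\int_{\mathbb{R}^{4}}\bigl|\nabla_{kj}^{2}\eta^{4}\,u_{k}u_{j}-|\nabla u|^{2}\Delta\eta^{4}\bigr|\,dx\le \frac{C}{R^{2}}\cdot\frac{CR^{2}}{\rho^{2}}=\frac{C_{1}}{\rho^{2}},
\]
with $C_{1}$ independent of $R$ and $u$.

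The main subtlety is matching powers of $\eta$. Using $\eta^{4}$ as the test function in the cone inequality (as in Lemma \ref{lem: computation of int sigm_2}) would only yield a bound on $\int\eta^{4}|\nabla u|^{2}$, which does not control $\int\eta^{2}|\nabla u|^{2}$. The lower power $\eta^{2}$ must be used instead in the integration-by-parts step, since the quadratic term $12\eta^{2}\eta_{k}\eta_{j}$ in $\nabla^{2}\eta^{4}$ produces a factor of precisely $\eta^{2}$ that cannot be upgraded without further structural information on $u$.
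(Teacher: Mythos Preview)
Your proposal is correct and follows essentially the same approach as the paper: both use the cone inequality $-\Delta u>\rho|\nabla u|^{2}$ tested against $\eta^{2}$, integrate by parts, and absorb to obtain $\int\eta^{2}|\nabla u|^{2}\le\frac{4}{\rho^{2}}\int|\nabla\eta|^{2}\le CR^{2}/\rho^{2}$, then combine with the pointwise bound $|\nabla^{2}\eta^{4}|\lesssim R^{-2}\eta^{2}$. The only cosmetic difference is that the paper uses Young's inequality $2ab\le\frac{\rho}{2}a^{2}+\frac{2}{\rho}b^{2}$ rather than your Cauchy--Schwarz-then-divide argument, but these yield the identical constant $4/\rho^{2}$.
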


\begin{proof}
Notice that, since $A(\rho,u)\in\Gamma_{2}^{+}$,
\[
0<\sigma_{1}(A(\rho,u))=-\Delta u-\rho|\nabla u|^{2}.
\]
It implies 
\begin{align*}
\int_{\mathbb{R}^{4}}\rho|\nabla u|^{2}\eta^{2}dx & \leq-\int_{\mathbb{R}^{4}}\left(\Delta u\right)\eta^{2}dx\\
 & =\int_{\mathbb{R}^{4}}\nabla u\cdot\nabla\eta^{2}dx\\
 & \leq2\int_{\mathbb{R}^{4}}|\nabla u|\cdot|\nabla\eta|\eta dx\\
 & \leq\frac{\rho}{2}\int_{\mathbb{R}^{4}}|\nabla u|^{2}\eta^{2}dx+\frac{2}{\rho}\int_{\mathbb{R}^{4}}|\nabla\eta|^{2}dx.
\end{align*}
Hence,
\begin{equation}
\int_{\mathbb{R}^{4}}|\nabla u|^{2}\eta^{2}dx\leq\frac{4}{\rho^{2}}\int_{\mathbb{R}^{4}}|\nabla\eta|^{2}dx<\frac{4C_{2}}{\rho^{2}}R^{2}.\label{eq:prep integral estimate}
\end{equation}
By (\ref{eq:cutoff eta gradient estimate}) and (\ref{eq:prep integral estimate}),
\begin{align*}
\int_{\mathbb{R}^{4}}\left|\nabla_{kj}^{2}\eta^{4}u_{k}u_{j}\right|dx & \leq C_{3}\int_{B_{2R}(0)}\left(\eta|\nabla^{2}\eta|+|\nabla\eta|^{2}\right)\eta^{2}|\nabla u|^{2}dx\\
 & \le C_{4}\frac{1}{R^{2}}\int_{\mathbb{R}^{4}}\eta^{2}|\nabla u|^{2}dx\\
 & \leq\frac{C_{5}}{\rho^{2}}.
\end{align*}
Similarly, 
\begin{align*}
\int_{\mathbb{R}^{4}}|\Delta\eta^{4}|\cdot|\nabla u|^{2}dx & \leq\frac{4C_{6}}{\rho^{2}}.
\end{align*}
Hence we have proved this lemma.
\end{proof}
Combining Lemmas \ref{lem: computation of int sigm_2} and \ref{lem: integral gradient estimate  },
we prove Theorem \ref{thm:Integral finit}.

\begin{proof}
[Proof of Theorem \ref{thm:Integral finit}]By Lemma \ref{lem:divergence },
${\rm div}(|\nabla u|^{2}\nabla u)<0$. From Lemma \ref{lem: computation of int sigm_2},
since $\rho>0,$
\begin{align*}
2\int_{\mathbb{R}^{4}}\sigma_{2}(A(\rho,u))\eta^{4}dx & =\int_{\mathbb{R}^{4}}\left(\left(\nabla_{kj}^{2}\eta^{4}\right)u_{k}u_{j}-|\nabla u|^{2}\Delta\eta^{4}\right)dx\\
 & \quad+\int_{\mathbb{R}^{4}}\rho\eta^{4}{\rm div}(|\nabla u|^{2}\nabla u)dx\\
 & \leq\int_{\mathbb{R}^{4}}\left(\left(\nabla_{kj}^{2}\eta^{4}\right)u_{k}u_{j}-|\nabla u|^{2}\Delta\eta^{4}\right)dx.
\end{align*}
By Lemma \ref{lem: integral gradient estimate  },
\begin{align}
2\int_{\mathbb{R}^{4}}\sigma_{2}(A(\rho,u))\eta^{4}dx & \leq\frac{C_{1}}{\rho^{2}}.\label{eq:integral of sigma_2}
\end{align}
As the right-hand side of (\ref{eq:integral of sigma_2}) is independent
of $R$, let $R\to\infty$, it holds
\[
\int_{\mathbb{R}^{4}}\sigma_{2}(A(\rho,u))dx<\frac{C_{1}}{2\rho^{2}}<\infty.
\]
We have finished the proof.
\end{proof}

\section{Local estimates\label{sec:local-estimates}}

In this section, following the work of Chen \cite{Chen2005LocalEF}
and Guan-Wang \cite{guanWan2003}, we establish local estimates for
the equation (\ref{eq:local estimate equation}). 

Let $\Omega\subset\mathbb{R}^{4}$ be an open domain. Consider the
equation in $\Omega$:
\begin{equation}
\begin{cases}
\sigma_{2}(A(\rho,u))=f(u)=\exp(4u)p(u),\\
\sigma_{1}(A(\rho,u))>0.
\end{cases}\label{eq:local estimate equation}
\end{equation}
Here $p:\mathbb{R\to\mathbb{R}}$ is a smooth positive polynomial-like
function such that for some $m\geq0$, $C_{p}>0$, and any $a\in\mathbb{R}$,
it holds
\begin{equation}
p(a)+\left|p'(a)\right|(1+|a|)+\left|p''(a)\right|(1+|a|)^{2}\leq C_{p}\left(1+|a|\right)^{4m},\label{eq:growth condition of P(u)}
\end{equation}
and
\begin{equation}
p(a)>C_{p}^{-1}(1+|a|)^{4m}.\label{eq:growth condition 2}
\end{equation}
In particular, $p$ can be a positive constant, or any positive polynomial.

We note that comparing to (\ref{eq:PDE system}), we have fixed the
positive constant $\beta$ as $4$ in (\ref{eq:local estimate equation}).
This does not however reduce the generality due to Remark \ref{rem:+/-cone}.

First, we state the following local $C^{1}$ and $C^{2}$ estimate,
which is a key ingredient for (\ref{eq:local estimate equation}).
\begin{thm}
[\cite{Chen2005LocalEF}, Theorem 1.1 ]\label{thm:Gradient and second order estimate}Let
$u$ be a $C^{4}$-solution to (\ref{eq:local estimate equation})
for $\rho>0$ in $B_{2R}(0)$. Assume that $p$ satisfies (\ref{eq:growth condition of P(u)}).
Then, we have for $x\in B_{R}(0)$, 
\begin{equation}
\rho|\nabla^{2}u|+\rho^{2}|\nabla u|^{2}\le C\left(\frac{1}{|R|^{2}}+c_{sup}(f)\right),\label{eq:C^2 estimate by Chen}
\end{equation}
for some universal constant $C$, and 
\[
c_{sup}(f)=\sup_{x\in B_{R}}\left(|f(u)|^{\frac{1}{2}}+|\left(f^{\frac{1}{2}}\right)'|+|\left(f^{\frac{1}{2}}\right)''|\right).
\]
\end{thm}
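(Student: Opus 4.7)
The plan is to reduce this to the $\rho = 1$ case (i.e.\ the $\sigma_{2}$-Yamabe equation for the conformal Schouten tensor) by the rescaling identity $A(\rho,u) = \tfrac{1}{\rho} A(1, \rho u)$ stated in Remark \ref{rem:+/-cone}. Set $v = \rho u$. Then $\nabla v = \rho\, \nabla u$ and $\nabla^{2} v = \rho \, \nabla^{2} u$, so $|\nabla^{2} v| + |\nabla v|^{2} = \rho |\nabla^{2} u| + \rho^{2}|\nabla u|^{2}$, which is exactly the quantity in the target estimate. The equation becomes
\[
\sigma_{2}(A(1,v)) \;=\; \rho^{2} f(v/\rho) \;=:\; \tilde{f}(v), \qquad \sigma_{1}(A(1,v)) > 0.
\]
A short check shows $c_{\mathrm{sup}}(\tilde f)$ is controlled by $c_{\mathrm{sup}}(f)$ (uniformly in $\rho$) once one uses (\ref{eq:growth condition of P(u)}) and the fact that $\rho \leq \rho$, so it suffices to prove
\[
|\nabla^{2} v| + |\nabla v|^{2} \;\leq\; C\!\left(\tfrac{1}{R^{2}} + c_{\mathrm{sup}}(\tilde f)\right)
\]
for $C^{4}$ solutions of $\sigma_{2}(A(1,v)) = \tilde f(v)$ on $B_{2R}$, with no lower bound on $v$ required.

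For the gradient estimate, the strategy I would follow is the classical Bernstein maximum-principle trick in the form adapted by Guan--Wang and Chen. Consider the test function
\[
W(x) \;=\; \eta(x)^{2}\,|\nabla v|^{2}\,e^{\varphi(v)},
\]
where $\eta$ is a cutoff equal to $1$ on $B_{R}$ and supported in $B_{2R}$ with $|\nabla \eta|^{2} + |\nabla^{2}\eta| \leq C/R^{2}$, and $\varphi$ is a suitably chosen auxiliary function of $v$. At an interior maximum $x_{0}$ of $W$, first and second derivative conditions give $\nabla W(x_{0}) = 0$ and $\nabla^{2} W(x_{0}) \leq 0$. Linearising the equation (differentiating $\sigma_{2}(A(1,v)) = \tilde f(v)$) and contracting with the Newton tensor $T_{1}^{ij} = \partial\sigma_{2}/\partial A_{ij}$ yields an elliptic inequality of the form $T_{1}^{ij}W_{ij} \geq 0$ that must hold at $x_{0}$. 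The crucial structural feature is the term $-\tfrac{\rho}{2}|\nabla v|^{2}\delta_{ij}$ inside $A(1,v)$: differentiating it produces a coercive $|\nabla v|^{4}$ contribution that dominates the bad third-order terms coming from $\nabla^{2} W$ after one absorbs $\nabla^{2} v$ via the Cauchy--Schwarz identity $T_{1}^{ij} v_{ki} v_{kj} \geq \sigma_{1}(T_{1}) \cdot (\text{Hessian norm})$. This gives $|\nabla v|^{2}(x_{0}) \leq C(1/R^{2} + c_{\mathrm{sup}}(\tilde f))$ with a constant independent of any $C^{0}$-bound.

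For the Hessian estimate, I would iterate the same idea one order higher with the auxiliary function
\[
\widetilde{W}(x) \;=\; \eta(x)^{2}\,\lambda_{\max}(A(1,v))(x)\,e^{\psi(v) + \mu |\nabla v|^{2}},
\]
where $\lambda_{\max}$ denotes the largest eigenvalue of $A(1,v)$. At an interior maximum, rotate coordinates so that $A(1,v)$ is diagonal and write the usual perturbation argument to differentiate $\lambda_{\max}$. Invoking the concavity of $\sigma_{2}^{1/2}$ on $\Gamma_{2}^{+}$, the linearised operator $T_{1}^{ij}$ is uniformly elliptic on the set where $\lambda_{\max}$ is large, and the $\rho$-term again produces the key coercive contribution. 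Feeding in the gradient bound just obtained lets one absorb all dangerous terms and conclude $\lambda_{\max}(A(1,v)) \leq C(1/R^{2} + c_{\mathrm{sup}}(\tilde f))$; combined with $\sigma_{1}(A(1,v)) > 0$ and the already established gradient bound, this controls $|\nabla^{2} v|$.

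The main obstacle, and the reason this estimate is delicate, is that there is no $C^{0}$-bound on $v$ available; so one cannot simply absorb lower order terms using $\|v\|_{L^{\infty}}$. The argument above avoids this precisely because the structural $|\nabla v|^{2}\delta_{ij}$ term in $A(1,v)$ furnishes, after one hit of the linearised operator, a \emph{positive} quartic contribution $c|\nabla v|^{4}$ that dominates the uncontrolled third-order cross terms; this is the same mechanism that makes the $\sigma_{k}$-Yamabe equation enjoy gradient estimates independent of $\inf v$. Everything in the proof is $\rho$-independent after the rescaling $v = \rho u$, so unwinding $v = \rho u$ yields the claimed estimate with the exact powers of $\rho$ stated.
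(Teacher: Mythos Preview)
The paper does not prove this theorem at all: it is quoted verbatim as Theorem~1.1 of Chen \cite{Chen2005LocalEF} and used as a black box, so there is no ``paper's own proof'' to compare against. Your outline---reduce to $\rho=1$ via $v=\rho u$ and then run the Guan--Wang/Chen Bernstein auxiliary-function argument, first for $|\nabla v|^{2}$ and then for the largest eigenvalue of $A(1,v)$---is exactly the strategy of \cite{Chen2005LocalEF} (and of \cite{guanWan2003} in the $\sigma_{k}$-Yamabe setting), and you have correctly identified the key structural point: the lower-order term $-\tfrac{1}{2}|\nabla v|^{2}\delta_{ij}$ in $A(1,v)$ produces, after linearisation, a coercive $|\nabla v|^{4}$ contribution that makes the estimate independent of $\inf v$.

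One genuine wrinkle in your reduction step: the sentence ``a short check shows $c_{\sup}(\tilde f)$ is controlled by $c_{\sup}(f)$ uniformly in $\rho$'' is not correct as written. With $\tilde f(v)=\rho^{2}f(v/\rho)$ one has $\tilde f^{1/2}=\rho f^{1/2}$, $(\tilde f^{1/2})'=(f^{1/2})'$, and $(\tilde f^{1/2})''=\rho^{-1}(f^{1/2})''$, so $c_{\sup}(\tilde f)$ picks up factors of $\rho$ and $\rho^{-1}$ and is \emph{not} uniformly comparable to $c_{\sup}(f)$. In the paper this does not matter, because every downstream application (Theorem~\ref{thm:Asyp upperbound Guan Wang }, Corollary~\ref{cor:Guan-Wang}) fixes $\rho>0$ and allows constants to depend on $\rho$; but if you want the estimate with a truly $\rho$-independent $C$ as the statement suggests, the rescaling shortcut is not enough and one has to track the $\rho$-dependence through Chen's argument directly. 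Also, the phrase ``the fact that $\rho\le\rho$'' is presumably a typo for something else and should be fixed.
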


\begin{rem}
If we start with a $C^{2}$ solution $u$ of (\ref{eq:local estimate equation})
in $B_{R}(0)$, by our assumption of $f(u)$ in (\ref{eq:local estimate equation}),
and the fact that $\sigma_{2}^{\frac{1}{2}}$ is concave in $\Gamma_{2}^{+}$,
we may apply Evans-Krylov theorem and linear elliptic theory to conclude
that $u\in C^{4}(B_{R/2}(0))$. See \cite{Caffarelli1995FullyNE},
chapter 8 and 9. In particular, by Chen's estimate (\ref{eq:C^2 estimate by Chen}),
all higher order estimates only depend on $R$, $\sup|f(u)|^{\frac{1}{2}}$,
and bounds of derivatives of $f^{\frac{1}{2}}$. Without no confusion,
the solutions in the following sections are at least $C^{4}$ smooth.
\end{rem}

Second, we establish the following improved estimate, which is a modification
of Proposition 3.6 in \cite{guanWan2003} to our settings, also c.f
\cite{Hhan2004local,LiWangLuc-ajm}. From now on, we fix a $\rho>0.$
\begin{thm}
\label{thm:Asyp upperbound Guan Wang } Assume that $p$ satisfies
(\ref{eq:growth condition of P(u)}) and (\ref{eq:growth condition 2}).
Let $u$ be a solution to (\ref{eq:local estimate equation}) for
$\rho>0$ in $B_{R}(0)$ for some $R>1$. For any $\delta>0$, there
exists an $\epsilon_{0}=\epsilon_{0}(\delta,p,m,\rho)>0$ independent
of $R$ such that if 
\[
\int_{B_{R}(0)}f(u)dx<\epsilon_{0},
\]
 then
\begin{equation}
\sup_{B_{\frac{1}{2}R}(0)}\frac{1}{4}\ln f(u)+\ln R<\ln\delta.\label{eq:asymp estimate}
\end{equation}
\end{thm}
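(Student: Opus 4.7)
The plan is to prove Theorem \ref{thm:Asyp upperbound Guan Wang } by contradiction via a blow-up/point-picking argument of the Guan--Wang type, combining the local $C^{2}$ estimate of Chen (Theorem \ref{thm:Gradient and second order estimate}) with the conformal invariance of the measure $e^{4u}\,dx$. Suppose the conclusion fails for some fixed $\delta>0$: then for every $\epsilon_{0}>0$ there exist $R>1$ and a solution $u$ of (\ref{eq:local estimate equation}) on $B_{R}(0)$ with $\int_{B_{R}} f(u)\,dx<\epsilon_{0}$ yet $\sup_{B_{R/2}} f(u)^{1/4}\cdot R\ge\delta$. My goal is to produce a positive constant $c_{1}=c_{1}(\delta,\rho,p)>0$, independent of such $u$, with $\int_{B_{R}} f(u)\,dx\ge c_{1}$; choosing $\epsilon_{0}<c_{1}$ then gives the desired contradiction.

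First, I would apply a standard point-picking to the auxiliary function $G(x):=f(u(x))^{1/4}(R-|x|)$ on $B_{R}(0)$. Since $G\equiv 0$ on $\partial B_{R}$ and $\sup_{B_{R/2}} G\ge\delta/2$ by assumption, $G$ attains its maximum at some interior $x_{0}$ with $G(x_{0})\ge\delta/2$. Set $r_{0}:=(R-|x_{0}|)/2$ and $\lambda:=f(u(x_{0}))^{1/4}$; the maximality of $G(x_{0})$ yields $\lambda r_{0}\ge\delta/4$ and $f(u)^{1/4}\le 2\lambda$ throughout $B_{r_{0}}(x_{0})$. I then rescale by
\[
v(y):=u(x_{0}+y/\lambda)-\ln\lambda,\qquad y\in B_{\lambda r_{0}}(0).
\]
Using $A(\rho,v)(y)=\lambda^{-2}A(\rho,u)(x_{0}+y/\lambda)$, one verifies that $v$ satisfies $\sigma_{2}(A(\rho,v))=e^{4v}p(v+\ln\lambda)=:\tilde{f}(v)$ on $B_{\lambda r_{0}}(0)$, with $\tilde{f}(v(0))=1$ and $\tilde{f}(v)\le 16$ everywhere on that ball.

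The technical crux is to show that Theorem \ref{thm:Gradient and second order estimate} applied to $v$ produces bounds that are uniform in the blow-up parameter $\lambda$. The shifted weight $p(v+\ln\lambda)$ depends on $\lambda$, so a priori the quantity $c_{\sup}(\tilde{f})$ could blow up. This is precisely where the hypotheses (\ref{eq:growth condition of P(u)})--(\ref{eq:growth condition 2}) become essential: together they give the uniform bounds $|p'(a)/p(a)|\le C_{p}^{2}(1+|a|)^{-1}$ and $|p''(a)/p(a)|\le C_{p}^{2}(1+|a|)^{-2}$ for all $a\in\mathbb{R}$. Since $(\tilde{f}^{1/2})'$ and $(\tilde{f}^{1/2})''$ can be written as $\tilde{f}^{1/2}$ times polynomial combinations of $p'/p$ and $p''/p$, we obtain $c_{\sup}(\tilde{f})\le C\sup\tilde{f}^{1/2}\le C$ independently of $\lambda$. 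Applying Theorem \ref{thm:Gradient and second order estimate} on $B_{\lambda r_{0}}(0)$ (radius $\ge\delta/4$) then yields
\[
\rho|\nabla^{2}v|+\rho^{2}|\nabla v|^{2}\le C\!\left(\tfrac{4}{(\lambda r_{0})^{2}}+C\right)\le C(\delta,\rho)
\]
on $B_{\lambda r_{0}/2}(0)\supset B_{\delta/8}(0)$.

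Finally, combining the gradient bound with the pointwise bound on $p'/p$, one has $|\nabla_{y}\ln\tilde{f}(v)|=|(4+p'/p)\nabla v|\le C(\delta)$ on $B_{\delta/8}(0)$; together with $\tilde{f}(v(0))=1$, this forces $\tilde{f}(v)\ge 1/2$ on some ball $B_{r_{*}}(0)$ with $r_{*}=r_{*}(\delta,\rho,p)>0$. The change of variables $x=x_{0}+y/\lambda$ gives
\[
\int_{B_{r_{*}}(0)}\tilde{f}(v)\,dy=\int_{B_{r_{*}/\lambda}(x_{0})}f(u)\,dx,
\]
and one checks that $B_{r_{*}/\lambda}(x_{0})\subset B_{R}(0)$ (since $r_{*}<\delta/2\le 2\lambda r_{0}$ implies $r_{*}/\lambda<2r_{0}=R-|x_{0}|$). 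Therefore $\int_{B_{R}} f(u)\,dx\ge\tfrac{1}{2}|B_{r_{*}}|=:c_{1}(\delta)>0$, contradicting $\int_{B_{R}} f(u)\,dx<\epsilon_{0}$ once $\epsilon_{0}<c_{1}(\delta)$. The main obstacle is the careful bookkeeping of the $\lambda$-shift in the nonlinearity $p(v+\ln\lambda)$ across the blow-up, and the growth assumptions (\ref{eq:growth condition of P(u)})--(\ref{eq:growth condition 2}) are tailor-made to keep all the relative quantities $p^{(j)}/p$ uniformly bounded.
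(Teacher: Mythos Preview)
Your proof is correct and follows the same Guan--Wang blow-up scheme as the paper: point-picking, rescaling so that the right-hand side equals $1$ at the new origin and is bounded by a fixed constant on a ball of definite size, applying Chen's estimate (Theorem \ref{thm:Gradient and second order estimate}) to get a uniform gradient bound, and reading off a uniform lower bound for $\int f(u)\,dx$. The paper carries this out slightly differently in two places. First, for the point-picking step the paper maximizes $\phi(\lambda)=(\tfrac34-\lambda)^{4}\sup_{B_{\lambda R}}f(u)$ over concentric balls rather than your direct maximization of $f(u)^{1/4}(R-|x|)$; both are standard doubling devices and yield the same conclusion. Second, and more substantively, to control $c_{\sup}$ after rescaling the paper introduces a modified shift $\mu'$ with $\mu'(1+|\ln\mu'|)^{m}=\mu$, defines $\tilde p(v)=p(v+\ln\mu')/(1+|\ln\mu'|)^{4m}$, and then invokes an elementary comparison lemma (Lemma \ref{lem: elementrary}) to bound $v$ itself before estimating $c_{\sup}$. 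Your route is cleaner: you observe directly that (\ref{eq:growth condition of P(u)})--(\ref{eq:growth condition 2}) force $|p'/p|$ and $|p''/p|$ to be uniformly bounded, so the derivatives of $\tilde f^{1/2}$ are controlled by $\tilde f^{1/2}$ itself, which is already $\le 4$. This eliminates the need for Lemma \ref{lem: elementrary} and the somewhat artificial normalization $\mu'$, at the cost of using both (\ref{eq:growth condition of P(u)}) and (\ref{eq:growth condition 2}) already at the $c_{\sup}$ stage rather than only at the end.
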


Using Theorems \ref{thm:Gradient and second order estimate} and
\ref{thm:Asyp upperbound Guan Wang }, we obtain the key estimates:
\begin{cor}
\label{cor:Guan-Wang}Let $u$ be a solution to (\ref{eq:PDE system})
for $\rho>0$. Assume that $p$ satisfies (\ref{eq:growth condition of P(u)})
and (\ref{eq:growth condition 2}). Then,
\begin{equation}
\lim_{x\to\infty}u(x)+\ln|x|=-\infty.\label{eq:asyp direct from Guan-Wang}
\end{equation}
 For $|x|>0$ , there is a universal constant $c$ such that 
\begin{align}
|\nabla u| & \leq\frac{c}{\rho|x|},\ |\nabla^{2}u|\leq\frac{c}{\rho|x|^{2}}.\label{eq:estimate actually used-1}
\end{align}
\end{cor}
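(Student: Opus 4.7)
The plan is to combine three ingredients already established in the paper: the global integral bound from Theorem~\ref{thm:Integral finit}, the $\epsilon$-regularity of Theorem~\ref{thm:Asyp upperbound Guan Wang}, and the local $C^{1}$, $C^{2}$ estimate of Theorem~\ref{thm:Gradient and second order estimate}. The driving observation is that once the total integral $\int_{\mathbb{R}^{4}}f(u)dx$ is finite, applying the $\epsilon$-regularity to balls far from the origin forces $f(u)$ to decay pointwise at the rate $|x|^{-4}$, and Chen's local estimate then converts this into the claimed decay of the derivatives of $u$.

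First I would verify the asymptotic limit (\ref{eq:asyp direct from Guan-Wang}). Since $u$ solves (\ref{eq:PDE system}), Theorem~\ref{thm:Integral finit} gives $\int_{\mathbb{R}^{4}}f(u)dx=\int_{\mathbb{R}^{4}}\sigma_{2}(A(\rho,u))dx<C/\rho^{2}$, so for any $\varepsilon>0$ there is $R_{\varepsilon}$ with $\int_{\mathbb{R}^{4}\setminus B_{R_{\varepsilon}}(0)}f(u)dx<\varepsilon$. Given $\delta>0$, choose $\varepsilon<\epsilon_{0}(\delta,p,m,\rho)$ from Theorem~\ref{thm:Asyp upperbound Guan Wang}. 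For $|x_{0}|>2R_{\varepsilon}$ the ball $B_{|x_{0}|/2}(x_{0})$ is disjoint from $B_{R_{\varepsilon}}(0)$, so the translation-invariant version of Theorem~\ref{thm:Asyp upperbound Guan Wang} (applied at $x_{0}$ with $R=|x_{0}|/2$) yields $\sup_{B_{|x_{0}|/4}(x_{0})}\frac{1}{4}\ln f(u)+\ln(|x_{0}|/2)<\ln\delta$. The lower bound (\ref{eq:growth condition 2}) implies $f(u)\geq C_{p}^{-1}e^{4u}$, whence $\frac{1}{4}\ln f(u(x_{0}))\geq u(x_{0})-\frac{1}{4}\ln C_{p}$, and therefore $u(x_{0})+\ln|x_{0}|<\ln\delta+\ln 2+\frac{1}{4}\ln C_{p}$. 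Since $\delta>0$ is arbitrary, (\ref{eq:asyp direct from Guan-Wang}) follows.

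For the pointwise derivative bounds, fix $x_{0}$ with $|x_{0}|$ large and apply Theorem~\ref{thm:Gradient and second order estimate} on $B_{2R}(x_{0})$ with $R=|x_{0}|/8$. The growth hypotheses (\ref{eq:growth condition of P(u)})--(\ref{eq:growth condition 2}) yield $|p'|/p\leq C/(1+|a|)$ and $|p''|/p\leq C/(1+|a|)^{2}$, and differentiating $f^{1/2}(a)=e^{2a}p^{1/2}(a)$ twice one gets $|(f^{1/2})'(a)|+|(f^{1/2})''(a)|\leq Cf^{1/2}(a)$ uniformly in $a$. Combining this with the pointwise bound $f(u)\leq C/|x_{0}|^{4}$ on $B_{|x_{0}|/4}(x_{0})$ (taking $\delta=1$ in the previous step) gives $c_{sup}(f)|_{B_{R}(x_{0})}\leq C/|x_{0}|^{2}$. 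Chen's estimate then produces $\rho|\nabla^{2}u(x_{0})|+\rho^{2}|\nabla u(x_{0})|^{2}\leq C(1/R^{2}+c_{sup}(f))\leq C/|x_{0}|^{2}$, which yields (\ref{eq:estimate actually used-1}) for $|x_{0}|$ large; for $x_{0}$ in a compact region the estimate is absorbed into the constant by $C^{2}$-regularity.

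The main obstacle will be converting the pointwise decay of $f(u)$ into a bound on $c_{sup}(f)$, which a priori involves the first and second $u$-derivatives of $f^{1/2}$. The essential observation is that the ratios $|(f^{1/2})^{(k)}|/f^{1/2}$ for $k=1,2$ depend on $u$ only through the combinations $p'/p$ and $p''/p$, and these are uniformly bounded thanks to the two-sided control provided by (\ref{eq:growth condition of P(u)}) and (\ref{eq:growth condition 2}). Once this reduction is carried out, everything else is a routine assembly of the three theorems cited above.
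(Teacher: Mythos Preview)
Your proposal is correct and follows essentially the same route as the paper: combine Theorem~\ref{thm:Integral finit}, Theorem~\ref{thm:Asyp upperbound Guan Wang}, and Theorem~\ref{thm:Gradient and second order estimate}, applying the $\epsilon$-regularity on balls $B_{|x|/2}(x)$ far from the origin. The paper's proof is terser---it simply writes $\rho|\nabla^{2}u|+\rho^{2}|\nabla u|^{2}\le c/|x|^{2}+4c/|x|^{2}$ without spelling out why $c_{sup}(f)\lesssim |x|^{-2}$---whereas you correctly identify and fill in this step via the uniform bounds on $p'/p$ and $p''/p$ coming from (\ref{eq:growth condition of P(u)}) and (\ref{eq:growth condition 2}); you also make explicit the passage from $\tfrac14\ln f(u)$ to $u$ using (\ref{eq:growth condition 2}), which the paper leaves implicit.
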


\begin{proof}
Fix $\delta>0$ and $\epsilon=\epsilon_{0}(\delta)$ in Theorem \ref{thm:Asyp upperbound Guan Wang }.
\textcolor{black}{By Theorem \ref{thm:Integral finit}, a solution
to (\ref{eq:PDE system}) satisfies $\int_{\mathbb{R}^{4}}f(u)<\infty$.}
For $\epsilon$, we can find a big $\tilde{R}$ such that $\int_{\mathbb{R}^{4}\backslash B_{\tilde{R}}(0)}f(u)dx<\epsilon$.
Then, for any $x\in\mathbb{R}^{4}\backslash B_{2\tilde{R}}(0)$, we
have $B_{\frac{|x|}{2}}(x)\subset\mathbb{R}^{4}\backslash B_{\tilde{R}}(0)$,
and thereby 
\[
\sup_{B_{\frac{1}{2}|x|}(x)}\frac{1}{4}\ln f(u)<-\ln\frac{|x|}{2}+\ln\delta,
\]
which implies (\ref{eq:asyp direct from Guan-Wang}). By (\ref{eq:C^2 estimate by Chen}),
\[
\rho|\nabla^{2}u|+\rho^{2}|\nabla u|^{2}\leq\frac{c}{|x|^{2}}+\frac{4c}{|x|^{2}},
\]
implying (\ref{eq:estimate actually used-1}) for $|x|>2\tilde{R}$
. We obtain (\ref{eq:estimate actually used-1}) in $B_{2\tilde{R}}(0)\backslash\{0\}$
with a proper choice of constant $c$. 
\end{proof}
In the rest of this section, we prove Theorem \ref{thm:Asyp upperbound Guan Wang }.
We begin with a technical lemma.
\begin{lem}
\label{lem: elementrary}Let $p:\mathbb{R}\to\mathbb{R}^{+}$ be a
smooth positive function.
\begin{enumerate}
\item If $p$ satisfies (\ref{eq:growth condition of P(u)}) and for some
$b$
\begin{equation}
\frac{e^{4a}p(a+b)}{\left(1+|b|\right)^{4m}}>c_{0}>0,\label{eq:lowerbounde}
\end{equation}
then $a>C_{0}(c_{0},p,m)$ independent of $b$.
\item If $p$ satisfies (\ref{eq:growth condition 2}) and for some $b$
\begin{equation}
\frac{e^{4a}p(a+b)}{\left(1+|b|\right)^{4m}}<c_{1},\label{eq:upperbounde}
\end{equation}
 then $a<C_{1}(c_{1},p,m)$ independent of $b$.
\end{enumerate}
\end{lem}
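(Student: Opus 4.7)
The plan is to eliminate $p(a+b)$ from the hypothesis using the polynomial growth assumptions, then eliminate $b$ using an elementary submultiplicative inequality, and finally read off the bound on $a$ from a scalar inequality comparing an exponential to a polynomial in $|a|$.

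The key elementary fact I will use in both parts is the submultiplicative inequality
\[
1+|a+b| \leq (1+|a|)(1+|b|), \qquad \text{for all } a,b\in\mathbb{R},
\]
which holds because $|a+b|\leq |a|+|b|\leq |a|+|b|+|a||b|$. Applied two ways, it gives the useful lower bounds $(1+|b|)/(1+|a+b|)\geq 1/(1+|a|)$ and $(1+|a+b|)/(1+|b|)\geq 1/(1+|a|)$.

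For part (1), I substitute the upper bound $p(a+b)\leq C_{p}(1+|a+b|)^{4m}$ from \eqref{eq:growth condition of P(u)} into the hypothesis \eqref{eq:lowerbounde} to obtain
\[
c_{0}(1+|b|)^{4m} < e^{4a}\,p(a+b) \leq e^{4a} C_{p}(1+|a+b|)^{4m}.
\]
Rearranging and applying $(1+|b|)/(1+|a+b|)\geq 1/(1+|a|)$, this forces
\[
e^{4a}(1+|a|)^{4m} > \frac{c_{0}}{C_{p}}.
\]
Since $a\mapsto e^{4a}(1+|a|)^{4m}$ is continuous on $\mathbb{R}$ and tends to $0$ as $a\to -\infty$, there is a threshold $C_{0}=C_{0}(c_{0},C_{p},m)$ below which the inequality fails, and this gives $a>C_{0}$. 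Importantly, $b$ has disappeared from the final inequality, so the bound is independent of $b$. Part (2) is the mirror image: using the lower bound $p(a+b)\geq C_{p}^{-1}(1+|a+b|)^{4m}$ from \eqref{eq:growth condition 2} in \eqref{eq:upperbounde} and the other direction of the elementary inequality yields $e^{4a}<c_{1}C_{p}(1+|a|)^{4m}$, which forces $a<C_{1}(c_{1},C_{p},m)$ because the exponential dominates any polynomial as $a\to +\infty$.

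No substantial analytic obstacle is expected here; the lemma is really a book-keeping exercise distilling how the polynomial control on $p$ translates into pointwise control on its argument $a+b$ in terms of $a$ alone. The only point that deserves care is to insert each growth inequality in the direction consistent with the given inequality on $a$, so that the substitution preserves (rather than reverses) the hypothesis; once that is done, the submultiplicative inequality cleanly removes the dependence on $b$ and the conclusion is immediate from exponential-versus-polynomial growth.
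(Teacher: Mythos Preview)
Your argument is correct. Part~(1) is essentially identical to the paper's proof: both use the upper growth bound $p(a+b)\leq C_{p}(1+|a+b|)^{4m}$ together with the submultiplicative estimate $(1+|a+b|)\leq (1+|a|)(1+|b|)$ to reduce to $e^{4a}(1+|a|)^{4m}>c_{0}/C_{p}$, from which the lower bound on $a$ follows.

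Part~(2), however, is handled differently. The paper first observes that one may assume $a>0$, applies the crude bound $p(a+b)>C_{p}^{-1}$ to obtain $a<\tfrac{1}{4}\ln(c_{1}C_{p})+m\ln(1+|b|)$, and then removes the $b$-dependence by a three-case analysis according to whether $|b|$ is small, $b>0$, or $b<-C_{3}$ for a suitable threshold $C_{3}$. Your route is cleaner: by applying the submultiplicative inequality with the roles of $b$ and $a+b$ interchanged, you get directly $(1+|a+b|)/(1+|b|)\geq 1/(1+|a|)$, which combined with $p(a+b)\geq C_{p}^{-1}(1+|a+b|)^{4m}$ yields $e^{4a}<c_{1}C_{p}(1+|a|)^{4m}$ in one stroke, with no case distinction. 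The paper's approach makes the dependence on $b$ in each regime somewhat more visible, but your argument is shorter and fully symmetric with Part~(1).
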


\begin{proof}
If $p$ satisfies (\ref{eq:growth condition of P(u)}) and (\ref{eq:lowerbounde}),
\begin{align*}
c_{0} & <\frac{e^{4a}p(a+b)}{\left(1+|b|\right)^{4m}}<\frac{e^{4a}C_{p}(1+|a+b|)^{4m}}{\left(1+|b|\right)^{4m}}\\
 & \leq e^{4a}C_{2}(1+|a|)^{4m},
\end{align*}
for some positive $C_{2}=C_{2}(p,m)$. Thus, 
\[
e^{4a}(1+|a|)^{4m}\ge\frac{c_{0}}{C_{2}}.
\]
Hence, $a$ is bounded from below by some properly chosen constant
$C_{0}(c_{0},p,m)$. 

Next, we prove the second statement. Suppose that $p$ satisfies (\ref{eq:growth condition 2})
and (\ref{eq:upperbounde}). If $a<0,$ the statement already holds.
We may assume that $a>0$. Note by (\ref{eq:growth condition 2}),
\[
C_{p}^{-1}e^{4a}\frac{1}{\left(1+|b|\right)^{4m}}<\frac{e^{4a}p(a+b)}{\left(1+|b|\right)^{4m}}<c_{1}.
\]
Thus, 
\begin{equation}
a<\frac{1}{4}\ln\left(c_{1}C_{p}\right)+m\ln(1+|b|).\label{eq:eq1}
\end{equation}
Pick a positive constant $C_{3}=C_{3}(c_{1},p,m)$ so that 
\[
\frac{1}{2}|b|>\frac{1}{4}\ln\left(c_{1}C_{p}\right)+m\ln(1+|b|)
\]
 whenever $|b|>C_{3}$. We discuss the following 3 cases.

If $|b|\leq C_{3}$, then by (\ref{eq:eq1}), $a\leq\frac{1}{4}\ln\left(c_{1}C_{p}\right)+m\ln(1+C_{3})$. 

If $b>0$, by (\ref{eq:growth condition 2}) and the fact that $a>0,$
\[
C_{p}^{-1}e^{4a}<C_{p}^{-1}e^{4a}\frac{(1+|a+b|)^{4m}}{\left(1+|b|\right)^{4m}}<c_{1}.
\]
Thus, we get $a<\frac{1}{4}\ln\left(c_{1}C_{p}\right)$. 

If $b<-C_{3}$, then 
\begin{align*}
-b-a & \ge|b|-\frac{1}{4}\ln\left(c_{1}C_{p}\right)-m\ln(1+|b|)\\
 & >\frac{|b|}{2}>0.
\end{align*}
Thus, by (\ref{eq:growth condition 2}),
\begin{align*}
C_{p}^{-1}\left(\frac{1}{2}\right)^{4m}e^{4a} & \leq e^{4a}C_{p}^{-1}\frac{\left(1+\frac{|b|}{2}\right)^{4m}}{\left(1+|b|\right)^{4m}}<\frac{e^{4a}p(a+b)}{\left(1+|b|\right)^{4m}}<c_{1}.
\end{align*}
Hence, 
\[
a<m\ln2+\frac{1}{4}\ln\left(c_{1}C_{p}\right).
\]

Combining all 3 cases, we have finished the proof.
\end{proof}
Finally, we are ready to prove Theorem \ref{thm:Asyp upperbound Guan Wang }.
\begin{proof}
(of Theorem \ref{thm:Asyp upperbound Guan Wang }). Let $\delta>0$.
Suppose that
\begin{equation}
\sup_{B_{\frac{R}{2}}(0)}\left(\ln f(u)+4\ln R\right)>4\ln\delta.\label{enu:2}
\end{equation}
We only need to show that
\begin{equation}
\int_{B_{R}(0)}f(u)dx>\epsilon_{0},\label{enu:1}
\end{equation}
for some $\epsilon_{0}=\epsilon_{0}(\delta,p,m,\rho)$. Define 
\begin{equation}
\phi:\lambda\mapsto\left(\frac{3}{4}-\lambda\right)^{4}\sup_{B_{\lambda R}(0)}f(u)\label{eq:defninition of f}
\end{equation}
for $\lambda\in(0,\frac{3}{4})$. Then, there exit $z\in B_{3R/4}(0)$
and $\lambda_{0}\in(0,3/4)$ such that 
\begin{equation}
\phi(\lambda_{0})=\sup_{\lambda_{0}\in(0,\frac{3}{4})}\phi(\lambda_{0}),\quad f(u(z))=\sup_{x\in B_{\lambda_{0}R}(0)}f(u(x)).\label{eq:lambda_i andz_i}
\end{equation}
Pick $s=\frac{1}{2}\left(\frac{3}{4}-\lambda_{0}\right)$. We have
$B_{sR}(z)\subset B_{R}(0)$ and
\[
\sup_{B_{sR}(z)}f(u)\leq\sup_{B_{\left(\lambda_{0}+s\right)R}(0)}f(u).
\]
Denote 
\[
\mu:=f(u(z))^{\frac{1}{4}}.
\]
Since 
\begin{align*}
\left(\frac{3}{4}-s-\lambda_{0}\right)^{4}\sup_{B_{\left(s+\lambda_{0}\right)R}(0)}f(u) & \leq\left(\frac{3}{4}-\lambda_{0}\right)^{4}\sup_{B_{\lambda_{0}R}(0)}f(u)\\
 & =\left(\frac{3}{4}-\lambda_{0}\right)^{4}\mu^{4},
\end{align*}
\begin{equation}
\sup_{B_{sR}(z)}f(u)\leq\sup_{B_{\left(\lambda_{0}+s\right)R}(0)}f(u)\leq2^{4}\mu^{4}.\label{eq:Guan-Wang estimate (harnarck)-1}
\end{equation}
Define the rescaled function $v$ as
\[
v(x):=u(z+\mu^{-1}x)-\ln\mu',
\]
where $\mu'$ satisfies 
\[
\mu'(1+|\ln\mu'|)^{m}=\mu.
\]
Let $\tilde{p}(v):=\frac{p(v+\ln\mu')}{(1+|\ln\mu'|)^{4m}}.$ Then,
$v$ satisfies
\[
\sigma_{2}(A(\rho,v))=\frac{f(v+\ln\mu')}{(1+|\ln\mu'|)^{4m}}=e^{4v}\tilde{p}(v).
\]

From (\ref{enu:2}), we have 
\begin{align*}
\phi\left(\frac{1}{2}\right)R^{4} & =\left(\frac{1}{4}\right)^{4}\sup_{B_{\frac{R}{2}}(0)}e^{\ln f(u)+4\ln R}>(\frac{1}{4})^{4}\delta^{4}.
\end{align*}
Then, by (\ref{eq:defninition of f}) and (\ref{eq:lambda_i andz_i}),
\begin{align*}
s^{4}\mu^{4}R^{4} & =\left(\frac{\frac{3}{4}-\lambda_{0}}{2}\right)^{4}f(u(z))R^{4}\\
 & \geq2^{-4}\phi(\lambda_{0})R^{4}\\
 & \geq2^{-4}\phi\left(\frac{1}{2}\right)R^{4}\\
 & >2^{-4}4^{-4}\delta^{4}.
\end{align*}
Hence, $B_{s\mu R}(0)$ contains a fixed ball $B_{R'}(0)$, if we
denote $R':=8^{-1}\delta$. 

We claim that $v$ is uniformly bounded in $B_{R'}(0)$. Since
\begin{align*}
1 & =e^{4v(0)}\tilde{p}(v(0))=e^{4v(0)}\frac{p(v(0)+\ln\mu')}{(1+|\ln\mu'|)^{4m}},
\end{align*}
we have $v(0)>C_{0}(1,p,m)$ by Lemma \ref{lem: elementrary}. By
(\ref{eq:Guan-Wang estimate (harnarck)-1}),
\[
\sup_{x\in B_{s\mu R}(0)}\frac{e^{4v}p(v+\ln\mu')}{\left(1+|\ln\mu'|\right)^{4m}}=\sup_{x\in B_{sR}(z)}\frac{f(u)}{\mu^{4}}\leq2^{4},
\]
and by Lemma \ref{lem: elementrary}, we have $\sup_{B_{s\mu R}(0)}v<C_{1}(2^{4},p,m).$
Since $\tilde{p}(v)$ also satisfies growth condition (\ref{eq:growth condition of P(u)}),
and $v$ is bounded from above by $C_{1}(2^{4},p,m)$, it holds that
\begin{align}
c_{sup}(e^{2v}\tilde{p}^{\frac{1}{2}}(v)) & \leq C_{4}e^{2v}\left(\tilde{p}^{\frac{1}{2}}(v)+|(\tilde{p}^{\frac{1}{2}}(v))'|+|(\tilde{p}^{\frac{1}{2}}(v))''|\right)\label{eq:bounded ness of p_i}\\
 & \leq C_{5}e^{2v}\left(1+|v|\right)^{2m}\nonumber \\
 & <C_{6},\nonumber 
\end{align}
where $C_{4},C_{5},C_{6}$ are constants only depending on $p$ and
$m$. From (\ref{eq:bounded ness of p_i}) and Theorem \ref{thm:Gradient and second order estimate},
\begin{equation}
\sup_{B_{R'}(0)}|\nabla v|^{2}<\frac{C}{\rho^{2}}\left(\frac{1}{\left(R'\right)^{2}}+c_{sup}(e^{2v}\tilde{p}^{\frac{1}{2}}(v))\right)<C_{7},\label{eq:gradientestimate in Guan-Wang}
\end{equation}
for some constant $C_{7}=C_{7}(R',C_{6},p,m,\rho)=C_{7}(p,m,\rho)$.
The gradient estimate (\ref{eq:gradientestimate in Guan-Wang}) together
with the fact that $C_{0}(1,p,m)<v(0)\leq C_{1}(2^{4},p,m)$ implies
that $v$ is uniformly bounded in $B_{R'}(0)$. 

However, if $v$ is uniformly bounded in $B_{R'}(0)$, then 
\[
\int_{B_{R'}(0)}e^{4v}\tilde{p}(v)dx>C_{8}(\delta,p,m,\rho)>0.
\]
Let $\epsilon_{0}=C_{8}$ and we obtain (\ref{enu:1}). Thus, we have
finished the proof.
\end{proof}

\section{Asymptotic behavior \label{sec:Asymptotic-behavior}}

In this section, we use the existing estimate of Corollary \ref{cor:Guan-Wang}
to describe the asymptotic behavior of our solution $u$ near infinity.
As a consequence, we establish the crucial fact that the shape of
level sets of $u$, after some proper re-scaling, is asymptotically
spherical. The main result in this section is Theorem \ref{thm:convergence of level set}. 

First, we set some notations for our analysis. Throughout this section,
$u$ is a solution of (\ref{eq:PDE system}) with $\beta=4$ and \textcolor{black}{$\rho>0$.}
\begin{defn}
We define
\[
\bar{u}(r):=\max_{|x|=r}u(x),\ \underline{u}(r):=\min_{|x|=r}u(x);
\]
\[
\overline{r}(t):=\max\{|x|:u(x)=t\},\ \underline{r}(t):=\min\{|x|:u(x)=t\}.
\]
\end{defn}

Second, we list some simple facts.
\begin{lem}
For some constant $C=C(\rho),$ we have
\begin{equation}
\bar{u}(r)-\underline{u}(r)\leq C.\label{eq:add20}
\end{equation}
\end{lem}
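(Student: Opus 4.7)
The plan is to deduce \eqref{eq:add20} directly from the gradient decay estimate in Corollary \ref{cor:Guan-Wang}, which gives
\[
|\nabla u(x)| \leq \frac{c}{\rho |x|}\quad\text{for all } x\neq 0,
\]
with $c$ a universal constant. The only remaining ingredient is that any two points on the same sphere $\{|x|=r\}\subset\mathbb{R}^{4}$ can be joined by a path lying entirely on that sphere whose length depends linearly on $r$.

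Concretely, fix $r>0$ and choose $x_{1}, x_{2}\in\{|x|=r\}$ realizing $\bar u(r)$ and $\underline u(r)$ respectively. The sphere of radius $r$ in $\mathbb{R}^{4}$ has intrinsic diameter $\pi r$, so there is a piecewise $C^{1}$ path $\gamma\colon[0,L]\to\{|x|=r\}$ from $x_{1}$ to $x_{2}$ with $L\leq \pi r$. Along $\gamma$ the norm $|x|$ is constant and equal to $r$, so the gradient estimate applies uniformly:
\[
\bar u(r)-\underline u(r) = u(x_{1})-u(x_{2}) \;\leq\; \int_{0}^{L}\bigl|\nabla u(\gamma(s))\bigr|\,ds \;\leq\; \pi r\cdot\frac{c}{\rho r} \;=\; \frac{\pi c}{\rho}.
\]
Setting $C=C(\rho):=\pi c/\rho$ yields the stated inequality.

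There is no substantive obstacle: the whole content of the lemma is the gradient estimate of Corollary \ref{cor:Guan-Wang}, which has already been established using Theorems \ref{thm:Gradient and second order estimate} and \ref{thm:Asyp upperbound Guan Wang }, together with the trivial spherical-path argument above. The one point to double-check is that Corollary \ref{cor:Guan-Wang} holds up to and including a fixed punctured neighborhood of the origin (it does, with a possibly larger universal $c$), so the inequality is valid for all $r>0$ with a single constant depending only on $\rho$.
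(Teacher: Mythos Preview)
Your proof is correct and follows exactly the approach indicated in the paper, which simply states that the inequality is a direct consequence of the gradient estimate \eqref{eq:estimate actually used-1}. You have merely supplied the routine details (integrating along a spherical geodesic of length at most $\pi r$) that the paper leaves implicit.
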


\begin{proof}
This is a direct consequence of the gradient estimate (\ref{eq:estimate actually used-1}). 
\end{proof}
\begin{lem}
\label{rem:decreasing}$\underline{u}(r)$ is non-increasing. 
\end{lem}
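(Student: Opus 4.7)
The plan is to combine the concavity of $\underline{u}\circ\exp$ from Theorem \ref{lem:Li-Nguyen Monotonicity formula } with the finiteness of $\underline{u}$ at the origin. Set $\phi(s):=\underline{u}(e^{s})$ for $s\in\mathbb{R}$. By Theorem \ref{lem:Li-Nguyen Monotonicity formula }, $\phi$ is concave on $\mathbb{R}$. Moreover, since $u\in C^{2}(\mathbb{R}^{4})$, continuity of $u$ at the origin gives
\[
\lim_{s\to-\infty}\phi(s)=\lim_{r\to0^{+}}\underline{u}(r)=u(0),
\]
which is a finite real number. So $\phi$ is a concave function on the whole real line with finite limit at $-\infty$.

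The key claim is: a concave function on $\mathbb{R}$ with finite limit at $-\infty$ is automatically non-increasing. To see this, I would argue by contradiction. Suppose $\phi(s_{1})<\phi(s_{2})$ for some $s_{1}<s_{2}$. Then for any $s_{0}<s_{1}$, writing $s_{1}=\lambda s_{0}+(1-\lambda)s_{2}$ with $\lambda=(s_{2}-s_{1})/(s_{2}-s_{0})$, concavity gives
\[
\phi(s_{1})\;\geq\;\frac{s_{2}-s_{1}}{s_{2}-s_{0}}\phi(s_{0})+\frac{s_{1}-s_{0}}{s_{2}-s_{0}}\phi(s_{2}),
\]
which rearranges to
\[
\phi(s_{0})\;\leq\;\phi(s_{1})+\frac{s_{1}-s_{0}}{s_{2}-s_{1}}\bigl(\phi(s_{1})-\phi(s_{2})\bigr).
\]
Since $\phi(s_{1})-\phi(s_{2})<0$, the right-hand side tends to $-\infty$ as $s_{0}\to-\infty$, contradicting the finiteness of $\lim_{s_0\to-\infty}\phi(s_0)=u(0)$.

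Therefore $\phi$ is non-increasing on $\mathbb{R}$, which is equivalent to $\underline{u}(r)$ being non-increasing on $(0,\infty)$. There is no real obstacle here; the main point is simply to recognize that the one-sided information (concavity alone) is enough once combined with the trivial but essential boundary fact that $\underline{u}(r)$ stays bounded as $r\to0^{+}$. Note that the decay estimate $\underline{u}(r)\to-\infty$ from Corollary \ref{cor:Guan-Wang} is compatible with, but not needed for, this proof.
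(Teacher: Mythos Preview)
Your argument is correct, but it takes a genuinely different route from the paper. The paper's proof is a one-line application of the maximum principle: since $u\in\mathcal{C}_{2}^{+}$ gives $\sigma_{1}(A(\rho,u))=-\Delta u-\rho|\nabla u|^{2}>0$, the function $u$ has no interior minimum (cf.\ Remark~\ref{rem:signconvention}), so for $r_{1}<r_{2}$ the minimum of $u$ on $\overline{B_{r_{2}}}$ is attained on $\partial B_{r_{2}}$, yielding $\underline{u}(r_{2})=\min_{\overline{B_{r_{2}}}}u\leq\min_{\partial B_{r_{1}}}u=\underline{u}(r_{1})$. By contrast, you invoke the heavier machinery of Theorem~\ref{lem:Li-Nguyen Monotonicity formula } (concavity of $\underline{u}\circ\exp$) and combine it with the finite limit $\underline{u}(r)\to u(0)$ as $r\to0^{+}$; your contradiction argument that a concave function on $\mathbb{R}$ with finite limit at $-\infty$ must be non-increasing is clean and correct. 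The paper's approach is more elementary and self-contained, relying only on the positive cone condition and the classical maximum principle, whereas your approach, while valid and logically independent (Theorem~\ref{lem:Li-Nguyen Monotonicity formula } is established in Section~\ref{sec:Preliminary-discussion} without reference to this lemma), imports a result whose proof is itself more involved than the fact being established.
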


\begin{proof}
Since $u\in\mathcal{C}_{2}^{+}$,
\[
\sigma_{1}(A(\rho,u))=-\Delta u-\rho|\nabla u|^{2}>0.
\]
We verify the claim by the maximum principle.
\end{proof}
\begin{lem}
We have
\begin{equation}
\underline{u}(\underline{r}(t))=t,\ \overline{u}(\overline{r}(t))=t,\ \underline{r}(\underline{u}(r))=r,\ \overline{r}(\overline{u}(r))=r.\label{eq:underline=00007Bu=00007D(underline=00007Br=00007D(t))=00003Dt}
\end{equation}
\end{lem}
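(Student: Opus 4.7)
My plan is to prove the four identities in two groups: the ones for $\overline{u},\overline{r}$ follow from intermediate value arguments along outward radial rays, while the ones for $\underline{u},\underline{r}$ require the additional ingredient of strict monotonicity of $\underline{u}$. The identities are understood on their natural domains (for instance $t<\tau':=\max u$ and $r$ beyond the radius of any maximum point of $u$), with the equation (\ref{eq:PDE system}) being translation-invariant so that a maximum of $u$ may be moved to the origin when needed.

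For $\overline{u}(\overline{r}(t))=t$, the inequality $\geq t$ holds because $\overline{r}(t)$ is attained by some $x_{0}$ with $u(x_{0})=t$. For the reverse, if some $x_{1}$ on the sphere of radius $\overline{r}(t)$ had $u(x_{1})>t$, Corollary \ref{cor:Guan-Wang} gives $u\to-\infty$ along the outward ray through $x_{1}$, and IVT produces a larger-radius point with $u=t$, contradicting the definition of $\overline{r}$. The same outward-ray IVT shows that $\overline{r}$ is strictly decreasing in $t$, and that on its range $\overline{u}(r)=\overline{r}^{-1}(r)$: if $t^{*}=\overline{r}^{-1}(r)$ then $\overline{u}(r)\geq t^{*}$ trivially, and $\overline{u}(r)>t^{*}$ would again give a larger-radius level point of value $t^{*}$ by outward IVT, contradicting $\overline{r}(t^{*})=r$. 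Hence $\overline{r}(\overline{u}(r))=r$.

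For the $\underline{u},\underline{r}$ pair the key step is strict monotonicity of $\underline{u}$. By Lemma \ref{rem:decreasing} $\underline{u}$ is non-increasing, and by Theorem \ref{lem:Li-Nguyen Monotonicity formula } the composition $g:=\underline{u}\circ\exp$ is concave. If $\underline{u}$ had a plateau on some interval, say $g(a)=g(b)=c$ with $a<b$, then for any $x_{0}<a$ the concavity inequality $g(a)\geq(1-\lambda)g(x_{0})+\lambda c$ with $\lambda=(a-x_{0})/(b-x_{0})$ combined with $g(a)=c$ yields $c\geq g(x_{0})$, while non-increase gives $g(x_{0})\geq c$; hence $g\equiv c$ on $(-\infty,b]$, which translates into $\underline{u}\equiv c$ on $(0,e^{b}]$ and, by continuity, $u(0)=c$. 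Then $u\geq c$ on $\overline{B(0,e^{b})}$ with the minimum value $c$ attained at the interior point $0$. Since $-\Delta u=\rho|\nabla u|^{2}+\sigma_{1}(A(\rho,u))>0$, $u$ is strictly superharmonic, so the strong minimum principle forces $u\equiv c$ on this ball; but then $\nabla u\equiv 0$ gives $\sigma_{2}(A(\rho,u))\equiv 0$, contradicting $f(u)>0$. Hence $\underline{u}$ is strictly decreasing.

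With strict monotonicity of $\underline{u}$ in hand, $\underline{r}(\underline{u}(r))=r$ is immediate: $\leq r$ is by definition, and any $|x_{1}|<r$ with $u(x_{1})=\underline{u}(r)$ would give $\underline{u}(|x_{1}|)\leq\underline{u}(r)$, contradicting strict decrease. For $\underline{u}(\underline{r}(t))=t$, one direction is trivial; for the reverse, $u\neq t$ on $B(0,\underline{r}(t))$ so by connectedness $u$ is either $>t$ or $<t$ on each inner sphere, and after translating a maximum to the origin so that $u(0)=\tau'>t$ a further IVT on a radial segment from $0$ rules out the $u<t$ alternative, giving $u\geq t$ on $\overline{B(0,\underline{r}(t))}$ and hence $\underline{u}(\underline{r}(t))\geq t$. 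The main obstacle is the rigidity step ruling out the initial plateau of $\underline{u}$: this is the only place where the PDE structure (strict superharmonicity together with $\sigma_{2}=f>0$) enters essentially, beyond the monotonicity lemmas themselves.
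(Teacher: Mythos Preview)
Your argument is correct but takes a substantially different route from the paper's. The paper proves only $\underline{u}(\underline{r}(t))=t$ in detail: it observes, ``by the maximum principle'', that $\underline{r}(t)=\min\{|x|:u(x)\le t\}$ (the closest point of the closed sublevel set to the origin must lie on its boundary $\{u=t\}$), so $u\ge t$ on $\overline{B_{\underline{r}(t)}}$ and the identity follows; the remaining three identities are dismissed as ``similar''. You instead handle the $(\overline u,\overline r)$ pair by outward--ray intermediate value arguments using $u\to-\infty$ from Corollary~\ref{cor:Guan-Wang}, and for the $(\underline u,\underline r)$ pair you first prove that $\underline u$ is \emph{strictly} decreasing, combining the concavity of $\underline u\circ\exp$ (Theorem~\ref{lem:Li-Nguyen Monotonicity formula }) with the strong minimum principle and $\sigma_2(A)>0$. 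This detour is not wasted: the identity $\underline{r}(\underline{u}(r))=r$ genuinely fails if $\underline u$ has a plateau, so your strict--monotonicity step supplies exactly what the paper's ``similar'' glosses over. The trade--off is that the paper's argument for $\underline{u}(\underline{r}(t))=t$ is shorter and does not require concavity of $\underline u\circ\exp$.

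One small point: your phrase ``translating a maximum to the origin'' is misleading, since the four quantities $\underline u,\overline u,\underline r,\overline r$ are defined relative to the given origin and are not translation invariant. What you actually need (and what the paper also implicitly uses) is that on the natural domain $t<u(0)$ one has $u(0)>t$, whence connectedness of $B_{\underline r(t)}$ together with $u\neq t$ there forces $u>t$ throughout, without any translation. Since all later uses of the lemma are for $t\to-\infty$, this domain restriction is harmless.
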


\begin{proof}
For a given $t$, there exists $x_{0}$ such that $|x_{0}|=\underline{r}(t)$
and $u(x_{0})=t$. Then,
\[
\underline{u}(\underline{r}(t))\leq u(x_{0})=t.
\]
By the maximum principle,
\[
\underline{r}(t)=\min\{|x|:u(x)=t\}=\min\{|x|:u(x)\leq t\}.
\]
Hence, $u\geq t$ in $\overline{B_{\underline{r}(t)}(0)}$. Therefore,
$\underline{u}(\underline{r}(t))=t$. 

The proof for other identities (\ref{eq:underline=00007Bu=00007D(underline=00007Br=00007D(t))=00003Dt})
is similar so we omit it here.
\end{proof}
Third, we consider the growth rate of $\underline{u}(r)$ and $u(x).$ 
\begin{lem}
Let $s=\log r$. Denote $\frac{d}{ds}\left(\underline{u}(e^{s})\right)^{+}$
to be the right derivative with respect to $s$. Then,
\begin{equation}
\lim_{s\to\infty}\frac{d}{ds}\left(\underline{u}(e^{s})\right)^{+}\geq-\frac{2}{\rho}.\label{eq:new1}
\end{equation}
\label{lem:.alpha >-2/rho}
\end{lem}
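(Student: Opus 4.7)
The plan is to reduce the statement to a pointwise bound $w'(s)\ge -2/\rho$ at almost every $s$ via the viscosity cone condition of Lemma~\ref{lem-aviscosity radial solution}, and then to use monotonicity of the right derivative to pass to the limit. Throughout, set $w(s):=\underline u(e^s)$.

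First I would observe that by Theorem~\ref{lem:Li-Nguyen Monotonicity formula }, $w$ is concave, and by Lemma~\ref{rem:decreasing}, non-increasing. Hence $w'(s^+)$ exists for every $s$ and is non-increasing in $s$. The gradient estimate $|\nabla u|\le c/(\rho|x|)$ from Corollary~\ref{cor:Guan-Wang} forces $|w'(s^+)|\le c/\rho$, so the limit $L:=\lim_{s\to\infty} w'(s^+)$ exists and is finite. The next step is the algebraic ingredient. For a smooth radial function $v(x)=g(|x|)$, direct diagonalization shows that $A(\rho,v)$ has a single radial eigenvalue $\lambda_r=-g''+\tfrac\rho2(g')^2$ and a triple tangential eigenvalue $\lambda_t=-g'/r-\tfrac\rho2(g')^2$. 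A short case analysis using $\sigma_1=\lambda_r+3\lambda_t>0$ and $\sigma_2=3\lambda_t(\lambda_r+\lambda_t)\ge 0$ rules out $\lambda_t<0$, so $A(\rho,v)\in\overline{\Gamma_2^+}$ forces $\lambda_t\ge 0$. Rewriting in $s=\ln r$ with $g(r)=w(s)$ this becomes $w'(s)\bigl(1+\tfrac\rho2 w'(s)\bigr)\le 0$, which combined with $w'(s)\le 0$ is exactly $w'(s)\ge -2/\rho$.

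To transfer this pointwise bound from smooth radial functions to the merely concave $w$, I would invoke Alexandrov's theorem: the concave function $w$ admits a quadratic Taylor polynomial $P_{s_0}$ at a.e.\ $s_0$. For such $s_0$ and any $x_0$ with $|x_0|=e^{s_0}$, set
\[
\Phi_\epsilon(x):= P_{s_0}(\ln|x|)-\epsilon|x-x_0|^2,\qquad \epsilon>0.
\]
The expansion $w(\sigma)-P_{s_0}(\sigma)=o((\sigma-s_0)^2)=o(|x-x_0|^2)$ ensures that $\Phi_\epsilon\le \underline u(|x|)$ in a neighborhood of $x_0$ with equality at $x_0$, so $\Phi_\epsilon$ is a legitimate test function from below. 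Lemma~\ref{lem-aviscosity radial solution} then forces $A(\rho,\Phi_\epsilon)(x_0)\in\overline{\Gamma_2^+}$; letting $\epsilon\downarrow0$ and applying the algebraic step to the radial polynomial $P_{s_0}(\ln|\cdot|)$ yields $w'(s_0)\ge -2/\rho$. Since $w'(s)\ge -2/\rho$ a.e.\ and $w'$ is non-increasing, $w'(s^+)=\inf_{s'>s}w'(s')\ge -2/\rho$ for every $s$, and hence $L\ge -2/\rho$.

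The main obstacle is the third step: verifying that $\Phi_\epsilon$ genuinely touches $\underline u(|x|)$ from below in a full open neighborhood of $x_0$ in $\mathbb R^4$, not just along a radial ray. This requires combining the one-variable Alexandrov expansion of $w$ with the expansion $\ln|x|-s_0 = \hat x_0\cdot(x-x_0)/r_0+O(|x-x_0|^2)$, so that the quadratic error in $\sigma-s_0$ is converted into a quadratic error in $|x-x_0|$ absorbed by the $\epsilon|x-x_0|^2$ term. Once this test function is in place, the remaining computation is purely algebraic and the monotonicity of $w'$ closes the argument.
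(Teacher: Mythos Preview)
Your proof is correct and follows essentially the same route as the paper's: both use concavity to get a monotone one-sided derivative, Alexandrov's theorem to locate points of second-order differentiability, and the viscosity cone condition of Lemma~\ref{lem-aviscosity radial solution} at those points to extract the bound $w'\ge -2/\rho$. The only cosmetic differences are that you spell out the test-function construction that the paper leaves implicit, and you read off the bound from $\lambda_t\ge 0$ rather than from the product expression $r^{4}\sigma_2 = 3\,\underline u_{ss}\bigl(\underline u_s+\tfrac{\rho}{2}\underline u_s^{\,2}\bigr)\ge 0$ together with $\underline u_{ss}\le 0$; your appeal to the gradient estimate of Corollary~\ref{cor:Guan-Wang} to ensure the limit is finite is harmless but unnecessary.
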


\begin{proof}
By Lemma \ref{rem:decreasing} and Theorem \ref{lem:Li-Nguyen Monotonicity formula },
$\underline{u}$ is non-increasing and concave as a function of $s$.
Thus, the right derivative of $\underline{u}(e^{s})$ exists and is
monotonic. By the classic Alexandrov's theorem, the first and second
derivatives $\underline{u}_{s}$, $\underline{u}_{ss}$ exist almost
everywhere and 
\begin{equation}
\underline{u}_{ss}\leq0\quad a.e.\label{eq:30}
\end{equation}
In particular, the limit appeared in (\ref{eq:new1}) exists. 

By Definition \ref{def:Viscosity definition}, for almost all $s,$
\begin{equation}
3\underline{u}_{ss}\left(\underline{u}_{s}+\frac{\rho}{2}\underline{u}_{s}^{2}\right)>0.\label{eq:radial v supersolution}
\end{equation}
We combine (\ref{eq:30}) and (\ref{eq:radial v supersolution}) to
prove (\ref{eq:new1}).
\end{proof}
\begin{lem}
\label{lem:alpha}If $u$ is a solution to (\ref{eq:PDE system}),
then there exists $\alpha\in[-\frac{2}{\rho},-1)$ such that 
\[
\lim_{|x|\to\infty}\frac{u(x)}{\ln|x|}=\alpha.
\]
\end{lem}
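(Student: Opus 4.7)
The plan is to combine the concavity of $\underline{u}\circ\exp$ from Theorem~\ref{lem:Li-Nguyen Monotonicity formula } with Lemma~\ref{lem:.alpha >-2/rho} and the decay statement of Corollary~\ref{cor:Guan-Wang}. Set $v(s) := \underline{u}(e^s)$; by Theorem~\ref{lem:Li-Nguyen Monotonicity formula } the function $v$ is concave, and by Lemma~\ref{rem:decreasing} it is non-increasing. Concavity implies that the right derivative $\frac{d}{ds}v(s)^+$ exists everywhere, is non-increasing in $s$, and is non-positive; Lemma~\ref{lem:.alpha >-2/rho} bounds it from below by $-2/\rho$. Hence the monotone limit
\[
\alpha := \lim_{s\to\infty} \frac{d}{ds} v(s)^+
\]
exists and lies in $[-2/\rho, 0]$.

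I would next transfer this slope information to the asymptotic of $u$ itself. Since $v$ is concave, it is locally Lipschitz, hence absolutely continuous, with $v(s) = v(0) + \int_0^s \frac{d}{dt}v(t)^+\, dt$. A standard Cesaro-type argument (for any $\epsilon>0$, eventually $\frac{d}{dt}v(t)^+ \in (\alpha-\epsilon,\alpha]$ by monotonicity of the derivative and convergence to $\alpha$) then gives $v(s)/s \to \alpha$, i.e., $\underline{u}(r)/\ln r \to \alpha$. Combining with estimate (\ref{eq:add20}), namely $|u(x)-\underline{u}(|x|)| \leq \bar u(|x|) - \underline{u}(|x|) \leq C$, yields $u(x)/\ln|x| \to \alpha$ as $|x|\to\infty$.

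The strict bound $\alpha < -1$ I would establish by contradiction. If $\alpha \geq -1$, then since $\frac{d}{ds}v(s)^+$ is non-increasing with limit $\alpha$, one has $\frac{d}{ds}v(s)^+ \geq \alpha \geq -1$ for every $s$. Consequently $s \mapsto v(s)+s = \underline{u}(e^s)+s$ is non-decreasing. On the other hand, Corollary~\ref{cor:Guan-Wang} asserts $u(x)+\ln|x|\to-\infty$ as $|x|\to\infty$; choosing $x_s$ on the sphere $|x|=e^s$ realizing the minimum forces $\underline{u}(e^s) + s = u(x_s) + \ln|x_s| \to -\infty$, contradicting the non-decreasing property. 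Thus $\alpha < -1$.

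The main obstacle is precisely the strict separation $\alpha < -1$: the weaker inequality $\alpha \leq -1$ follows almost immediately from Corollary~\ref{cor:Guan-Wang}, but upgrading to strict inequality is what makes the interval $[-2/\rho,-1)$ non-empty and, in turn, forces $\rho < 2$ for the existence of an entire solution. The key trick is that the pointwise lower bound $\frac{d}{ds}v(s)^+ \geq \alpha$ (provided by concavity, not merely by the existence of the limit) converts the asymptotic divergence $\underline{u}(e^s)+s\to-\infty$ into a rigid exclusion of the boundary case $\alpha=-1$.
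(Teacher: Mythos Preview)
Your argument is correct and follows essentially the same route as the paper: the paper likewise uses the concavity of $\underline{u}\circ\exp$ together with Lemma~\ref{lem:.alpha >-2/rho} to pin down $\alpha\ge -2/\rho$, then rules out $\alpha=-1$ by observing that concavity forces $s\mapsto \underline{u}(e^{s})-\alpha s$ to be non-decreasing, which contradicts Corollary~\ref{cor:Guan-Wang}. The only cosmetic difference is that the paper defines $\alpha$ directly as $\lim_{s\to\infty}\underline{u}(e^{s})/s$ rather than as the limit of the right derivative, but for a concave function these coincide.
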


\begin{proof}
Define 
\[
\alpha:=\lim_{s\to\infty}\frac{\underline{u}(e^{s})}{s}.
\]
By the concavity of $\underline{u}(e^{s})$ and Lemma \ref{lem:.alpha >-2/rho},
$\alpha$ is well defined and is bounded below by $-\frac{2}{\rho}$.
By Corollary \ref{cor:Guan-Wang}, $\alpha\leq-1$. We claim that
$\alpha<-1$. 

If not, then we assume that $\alpha=-1.$ We use Theorem \ref{lem:Li-Nguyen Monotonicity formula }
to argue that for $s_{0}<s_{1}$,
\[
\frac{\underline{u}(e^{s_{1}})-\underline{u}(e^{s_{0}})}{s_{1}-s_{0}}\geq\lim_{s\to\infty}\frac{\underline{u}(e^{s})-\underline{u}(e^{s_{0}})}{s-s_{0}}=\alpha,
\]
\begin{equation}
0\leq\frac{\underline{u}(e^{s_{1}})-\underline{u}(e^{s_{0}})}{s_{1}-s_{0}}-\alpha=\frac{\underline{u}(e^{s_{1}})-\alpha s_{1}-(\underline{u}(e^{s_{0}})-\alpha s_{0})}{s_{1}-s_{0}}.\label{eq:add4}
\end{equation}
Hence, $\underline{u}(e^{s})-\alpha s$ is increasing. For $s>0,$
\begin{equation}
\underline{u}(e^{s})-\alpha s\geq\underline{u}(1)-0>-\infty,\label{eq:Leamma alpha contradiction to Guan-Wang}
\end{equation}
which contradicts with (\ref{eq:asyp direct from Guan-Wang}) since
$\alpha=-1.$ We have thus proved that $\alpha<-1$.

In addition, by (\ref{eq:add20}),
\[
\lim_{r\to\infty}\frac{\bar{u}(r)}{\ln r}=\lim_{r\to\infty}\frac{\underline{u}(r)}{\ln r}=\alpha.
\]
Since $\underline{u}(|x|)\leq u(x)\leq\bar{u}(|x|)$, we have thus
proved the lemma.
\end{proof}
An immediate consequence of Lemma \ref{lem:.alpha >-2/rho} and Lemma
\ref{lem:alpha} is the following non-existence result for $\rho\geq2$.
\begin{cor}
\textcolor{black}{If $\rho\geq2$, then there is no solution to (\ref{eq:PDE system}).}\textcolor{red}{\label{cor:If rho>=00003D2}}
\end{cor}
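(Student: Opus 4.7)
The plan is to derive a direct contradiction by juxtaposing the two growth-rate bounds for $\underline{u}(e^s)$ established just above the corollary, namely Lemma \ref{lem:.alpha >-2/rho} and Lemma \ref{lem:alpha}. Concretely, suppose for contradiction that $u$ is a solution of (\ref{eq:PDE system}) with $\rho \geq 2$, and set
\[
\alpha := \lim_{s \to \infty} \frac{\underline{u}(e^s)}{s},
\]
which is well defined by the concavity of $\underline{u}\circ\exp$ coming from Theorem \ref{lem:Li-Nguyen Monotonicity formula }.

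First, I would invoke Lemma \ref{lem:.alpha >-2/rho}, which gives the lower bound $\alpha \geq -\tfrac{2}{\rho}$. When $\rho \geq 2$, this forces $\alpha \geq -1$. Next, I would invoke Lemma \ref{lem:alpha}, which asserts $\alpha < -1$. These two bounds are incompatible, hence no solution can exist.

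The two lemmas have already done all the real work: Lemma \ref{lem:.alpha >-2/rho} extracts the lower bound from the viscosity inequality $\sigma_{1}(A(\rho,\underline{u})) \geq 0$ via Lemma \ref{lem-aviscosity radial solution}, while the strict upper bound $\alpha < -1$ in Lemma \ref{lem:alpha} is where the decay estimate (\ref{eq:asyp direct from Guan-Wang}) from Corollary \ref{cor:Guan-Wang} is essential --- without it one could only conclude $\alpha \leq -1$, which would fail to contradict the lower bound at the borderline $\rho = 2$. Thus, there is no technical obstacle beyond observing the clash of inequalities; the only subtlety is making sure the \emph{strict} inequality $\alpha < -1$ is preserved at $\rho = 2$, and this is exactly what Lemma \ref{lem:alpha} provides. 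The proof is essentially one line once the preceding lemmas are in place.
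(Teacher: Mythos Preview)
Your proposal is correct and matches the paper's approach exactly: the paper states the corollary as ``an immediate consequence of Lemma \ref{lem:.alpha >-2/rho} and Lemma \ref{lem:alpha}'' without further argument, and your reasoning---that $\alpha\geq -2/\rho\geq -1$ collides with the strict bound $\alpha<-1$---is precisely the intended one-line deduction.
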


Fourth, we use Lemma \ref{lem:alpha} to control the shape of level
sets of $u$. The next proposition shows that $\overline{r}$ and
$\underline{r}$ cannot grow disproportionately as $t\to-\infty$. 
\begin{prop}
There exists some uniform constant $C=C(\rho)$ such that
\[
\limsup_{t\to-\infty}\frac{\bar{r}(t)}{\underline{r}(t)}<C.
\]
\label{prop:level set bounded}
\end{prop}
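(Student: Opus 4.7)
The plan is to exploit three facts already established in the paper: the gradient decay estimate of Corollary \ref{cor:Guan-Wang}, the concavity of $\underline{u}\circ\exp$ from Theorem \ref{lem:Li-Nguyen Monotonicity formula }, and the existence of the asymptotic slope $\alpha\in[-2/\rho,-1)$ from Lemma \ref{lem:alpha}. Set $r_1=\underline{r}(t)$, $r_2=\bar{r}(t)$, and $s_i=\ln r_i$. We want to bound $s_2-s_1$ uniformly as $t\to-\infty$.

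First I would use (\ref{eq:add20}), which is a direct consequence of the gradient estimate $|\nabla u|\le c/(\rho|x|)$: on the sphere $|x|=r_2$ we have $\bar{u}(r_2)=t$ by (\ref{eq:underline=00007Bu=00007D(underline=00007Br=00007D(t))=00003Dt}), and hence
\[
\underline{u}(r_2)\ \ge\ \bar{u}(r_2)-C\ =\ t-C.
\]
Since also $\underline{u}(r_1)=t$, the concave function $v(s):=\underline{u}(e^s)$ satisfies $v(s_1)=t$ and $v(s_2)\ge t-C$, so $v(s_2)-v(s_1)\ge -C$.

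Next I would use concavity of $v$ together with the asymptotic behavior. Lemma \ref{lem:alpha} gives $v(s)/s\to\alpha$; combined with concavity, the right derivative $v'_+(s)$ is non-increasing and $\lim_{s\to\infty}v'_+(s)=\alpha<-1$. In particular, for all sufficiently negative $t$ (equivalently, sufficiently large $s_1$), one has $v'_+(s_1)\le \alpha/2<-1/2$. Concavity then yields
\[
-C\ \le\ v(s_2)-v(s_1)\ \le\ v'_+(s_1)(s_2-s_1),
\]
and since $v'_+(s_1)$ is bounded above by a strictly negative constant, we obtain
\[
s_2-s_1\ \le\ \frac{-C}{v'_+(s_1)}\ \le\ 2C,
\]
which gives $\bar r(t)/\underline r(t)=e^{s_2-s_1}\le e^{2C}$, the desired uniform bound depending only on $\rho$ (through the gradient constant $C$ and the asymptotic slope $\alpha$).

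The only mildly subtle point is the use of one-sided derivatives: $\underline{u}$ need not be $C^1$ a priori, but concavity guarantees that $v'_+$ exists everywhere, is monotone, and that $v(s_2)-v(s_1)\le v'_+(s_1)(s_2-s_1)$ for $s_2\ge s_1$, which is exactly what the argument uses. The main conceptual obstacle is coupling the pointwise gradient bound (which controls $\bar{u}-\underline{u}$) with the global concavity and asymptotic slope (which control how fast $\underline{u}$ must decrease in $\ln r$); once this coupling is made, the bound is immediate.
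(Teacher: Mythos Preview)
Your proof is correct and uses the same three ingredients as the paper's proof: the oscillation bound (\ref{eq:add20}), the concavity of $\underline{u}\circ\exp$ from Theorem \ref{lem:Li-Nguyen Monotonicity formula }, and the asymptotic slope $\alpha<-1$ from Lemma \ref{lem:alpha}. The difference is purely in organization. The paper argues by contradiction: assuming $\bar r(t_i)/\underline r(t_i)\to\infty$, it observes that the secant slope of $v=\underline{u}\circ\exp$ over $[\ln\underline r(t_i),\ln\bar r(t_i)]$ tends to $0$, then interleaves with another subsequence whose secant slopes tend to $\alpha$ and invokes concavity to obtain $-1>\alpha\ge 0$. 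Your argument is the direct version of the same idea: the supporting-line inequality $v(s_2)-v(s_1)\le v'_+(s_1)(s_2-s_1)$ together with $v'_+(s_1)\to\alpha<-1$ immediately converts the bound $v(s_2)-v(s_1)\ge -C$ into $s_2-s_1\le 2C$. Your route is a bit cleaner, avoids subsequences, and yields the explicit constant $e^{2C}$; the paper's route makes the role of the limit $\alpha$ slightly more visible but is otherwise equivalent.
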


\begin{proof}
For any $t,$ define $t'=\underline{u}(\bar{r}(t))<t$. By (\ref{eq:underline=00007Bu=00007D(underline=00007Br=00007D(t))=00003Dt}),
$\underline{r}(t')=\overline{r}(t)$ and $t=\bar{u}(\bar{r}(t))$.
Thus, by (\ref{eq:add20}), 
\begin{equation}
|t'-t|=|\underline{u}(\overline{r}(t))-\bar{u}(\bar{r}(t))|\leq C.\label{eq:add1}
\end{equation}

We prove the proposition by contradiction. If there is a sequence
$\{t_{i}\}$ such that $\lim_{i\to\infty}\frac{\overline{r}(t_{i})}{\underline{r}(t_{i})}=\infty,$
then with (\ref{eq:add1}), and the obvious fact that $\underline{r}(t_{i})\to\infty$
as $i\to\infty,$ we get
\begin{equation}
\lim_{i\to\infty}\frac{t'_{i}-t_{i}}{\ln\underline{r}(t'_{i})-\ln\underline{r}(t_{i})}=\lim_{i\to\infty}\frac{t'_{i}-t_{i}}{\ln\bar{r}(t{}_{i})-\ln\underline{r}(t_{i})}=0,\label{eq:add10}
\end{equation}
where $t_{i}'=\underline{u}(\bar{r}(t_{i}))$. Let $r_{i}=\underline{r}(t_{i})$
and $r_{i}'=\underline{r}(t'_{i})$. Since $\lim_{r\to\infty}\frac{\underline{u}(r)}{\ln r}=\alpha<-1$
by Lemma \ref{lem:alpha}, replacing by a subsequence, we may assume
that 
\[
\lim_{i\to\infty}\frac{t_{i}-t_{i-1}}{\ln r_{i}-\ln r_{i-1}}=\alpha.
\]
Since $t_{i}'=\underline{u}(r_{i}')$, $r_{i}'=\overline{r}(t_{i})=\underline{r}(t_{i}')$,
after further replacing by a subsequence, we may assume:
\[
r_{1}<r_{1}'<r_{2}<r_{2}'\cdots<r_{n}<r_{n}'<\cdots,
\]
and 
\[
t_{1}>t_{1}'>t_{2}>t_{2}'>\cdots>t_{n}>t_{n}'>\cdots.
\]
By Theorem \ref{lem:Li-Nguyen Monotonicity formula }, we have
\begin{equation}
\frac{t_{i}-t_{i-1}}{\ln r_{i}-\ln r_{i-1}}\geq\frac{t_{i}'-t_{i}}{\ln r_{i}'-\ln r_{i}}.\label{eq:add11}
\end{equation}
However, by (\ref{eq:add10}) and (\ref{eq:add11})
\[
-1>\alpha=\lim_{i\to\infty}\frac{t_{i}-t_{i-1}}{\ln r_{i}-\ln r_{i-1}}\geq\lim_{i\to\infty}\frac{t_{i}'-t_{i}}{\ln r_{i}'-\ln r_{i}}=0,
\]
which is a contradiction. We have thus established our claim. 
\end{proof}
Finally, we present our blow-down analysis. Consider a monotone decreasing
sequence $\{t_{i}\}$ such that $t_{i}\to-\infty$ and the level sets
\[
L_{t_{i}}=\{x:u(x)=t_{i}\}.
\]
Define the following blow-down sequence of functions 
\begin{equation}
u_{i}(x):=u(\underline{r}(t_{i})x)-t_{i}.\label{eq:definition}
\end{equation}
The re-scaled level sets are defined by 
\[
\tilde{L}_{i}:=\frac{L_{t_{i}}}{\underline{r}(t_{i})}=\{x:u_{i}(x)=0\}.
\]
We present the main result in this section.

\begin{thm}
\label{thm:convergence of level set}For any compact subset $E\subset\mathbb{R}^{4}\backslash\{0\}$,
after replacing by a subsequence,
\[
\lim_{i\to\infty}\|u_{i}-\alpha\ln|x|\|_{C^{1,\gamma}(E)}=0,
\]
 for any $\gamma\in(0,1).$
\end{thm}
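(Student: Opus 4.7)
The plan is to combine the uniform local second-order estimate of Corollary \ref{cor:Guan-Wang} with the concavity of $\underline{u}\circ\exp$ from Theorem \ref{lem:Li-Nguyen Monotonicity formula } and the viscosity stability of the equation: uniform $C^2$ bounds furnish a $C^{1,\gamma}$-convergent subsequence, and the limit is then pinned down by the level-set asymptotics and the degenerate limit equation.

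First I would establish local compactness of $\{u_i\}$. The rescaling $u_i(x)=u(r_i x)-t_i$ combined with Corollary \ref{cor:Guan-Wang} yields the scale-invariant bounds
\[
|\nabla u_i(x)| \le \frac{c}{\rho|x|}, \qquad |\nabla^2 u_i(x)| \le \frac{c}{\rho|x|^2}
\]
uniformly in $i$ on $\mathbb{R}^4\setminus\{0\}$. Since $\underline{u_i}(1)=0$ by construction and $\overline{u_i}(1)\le C$ by \eqref{eq:add20}, integrating the gradient bound along radial rays produces a uniform $C^0$-bound on every compact annulus $\{a\le|x|\le b\}\subset\mathbb{R}^4\setminus\{0\}$, and Arzel\`a-Ascoli then yields a subsequence $u_{i_k}\to u_\infty$ in $C^{1,\gamma}_{\mathrm{loc}}(\mathbb{R}^4\setminus\{0\})$ for every $\gamma\in(0,1)$. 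To identify the minimum of $u_\infty$ on each sphere, set $g(s):=\underline{u}(e^s)$; by Theorem \ref{lem:Li-Nguyen Monotonicity formula } $g$ is concave, so $g'$ is monotone non-increasing, and together with $g(s)/s\to\alpha$ from Lemma \ref{lem:alpha} this forces $g'(s)\to\alpha$. Hence
\[
\underline{u_i}(\lambda)=g(\ln r_i+\ln\lambda)-g(\ln r_i)=\int_{\ln r_i}^{\ln r_i+\ln\lambda} g'(t)\,dt\;\longrightarrow\;\alpha\ln\lambda,
\]
so $\min_{|x|=\lambda}u_\infty(x)=\alpha\ln\lambda$ for every $\lambda>0$.

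Next I would pass the PDE to the limit. Rescaling gives $\sigma_2(A(\rho,u_i))(x)=(r_ie^{t_i})^4 e^{4u_i(x)}p(u_i(x)+t_i)$. Since $t_i/\ln r_i\to\alpha<-1$, the prefactor $(r_ie^{t_i})^4$ decays like $r_i^{4(1+\alpha)+o(1)}$, which dominates the $(\ln r_i)^{4m}$ growth coming from \eqref{eq:growth condition of P(u)}; so the right-hand side converges uniformly to $0$ on compacta of $\mathbb{R}^4\setminus\{0\}$. Stability of viscosity sub- and super-solutions then forces $A(\rho,u_\infty)\in\overline{\Gamma_2^+}$ with $\sigma_2(A(\rho,u_\infty))=0$. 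A direct polar computation shows that $\alpha\ln|x|$ satisfies the same relations, with radial and transverse eigenvalues $\pm\alpha(1+\rho\alpha/2)/|x|^2$. Setting $v:=u_\infty-\alpha\ln|x|\ge 0$, which by the previous step vanishes on every sphere, I would linearize $\sigma_2(A(\rho,\cdot))$ around $\alpha\ln|x|$: the principal symbol $\sigma_1 I-A(\rho,\alpha\ln|x|)$ has eigenvalues $3\lambda_t$ and $\lambda_t$ (with multiplicity three), where $\lambda_t=-\alpha(1+\rho\alpha/2)/|x|^2$ is strictly positive precisely when $\alpha\in(-2/\rho,0)$, a range ensured by Lemmas \ref{lem:.alpha >-2/rho} and \ref{lem:alpha} outside the borderline $\alpha=-2/\rho$. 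A strong maximum principle argument then forces $v\equiv 0$ near each touching sphere, hence globally.

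The main obstacle is this final identification. The limit equation is degenerate elliptic on the boundary of $\Gamma_2^+$; the linearization is only strictly elliptic when $\alpha>-2/\rho$, and it collapses entirely at the borderline value $\alpha=-2/\rho$, where all eigenvalues of $A(\rho,\alpha\ln|x|)$ vanish. Moreover, $C^{1,\gamma}$ convergence is \emph{a priori} too weak to pass classical second-order identities to the limit. A cleaner alternative, likely the route actually taken in the paper, is to establish a Harnack-type estimate in the spirit of Li \cite{li2009local} and Li-Nguyen \cite{li2014harnack} showing that $\overline{u}(r)-\underline{u}(r)\to 0$ along $r=r_i$; this immediately gives $\overline{u_i}(\lambda)\to\alpha\ln\lambda$ as well, and the desired $C^{1,\gamma}$ convergence follows directly from the uniform $C^2$ bounds, bypassing the degenerate maximum principle altogether.
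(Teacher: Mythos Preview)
Your compactness step and the passage of the equation to the viscosity limit match the paper's argument, and your computation $\underline{u_i}(\lambda)\to\alpha\ln\lambda$ via concavity of $g=\underline{u}\circ\exp$ is essentially the paper's own argument for pinning down the constant $c=\alpha$ (the paper carries it out after, rather than before, identifying the form of $u_\infty$). The genuine gap is in the identification $u_\infty=\alpha\ln|x|$.

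The paper does \emph{not} argue by linearization or by a Harnack estimate of the form $\overline{u}(r)-\underline{u}(r)\to 0$; it only proves the difference is bounded (equation \eqref{eq:add20}). Instead, once $A(\rho,u_\infty)\in\partial\Gamma_2^+$ in the viscosity sense is established, the paper invokes a ready-made Liouville theorem of Li (Theorem~1.18 in \cite{li2009local}, equivalently Theorem~1.5 in \cite{Li-ARMA06}): any viscosity solution of $A(1,w)\in\partial\Gamma_2^+$ on $\mathbb{R}^4\setminus\{0\}$ is necessarily $c\ln|x|+c'$. After the rescaling $A(\rho,u_\infty)=\rho^{-1}A(1,\rho u_\infty)$ this forces $u_\infty=c\ln|x|+c'$, and then $c'=0$ from $\min_{|x|=1}u_\infty=0$ and $c=\alpha$ from the concavity argument.

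Your proposed linearization route faces real obstacles that Li's theorem absorbs. First, $u_\infty$ is only $C^{1,1}$, so you cannot linearize $\sigma_2(A(\rho,\cdot))$ classically around $\alpha\ln|x|$; any comparison has to stay at the viscosity level. Second, and more seriously, the range established in Lemma~\ref{lem:alpha} is $\alpha\in[-\tfrac{2}{\rho},-1)$, so the borderline $\alpha=-\tfrac{2}{\rho}$ is \emph{not} excluded; at that value the eigenvalues of $A(\rho,\alpha\ln|x|)$ all vanish, the Newton tensor $\sigma_1 I-A$ is identically zero, and your strong maximum principle collapses. Your fallback suggestion (a Harnack inequality forcing $\overline{u}-\underline{u}\to0$) would itself require nontrivial work on the degenerate limit and is not the route taken. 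The missing ingredient is simply to cite Li's classification; with it, the rest of your outline is correct.
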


\begin{proof}
By the definition of $\underline{r}(t_{i})$, we have
\begin{equation}
\inf_{B_{1}\backslash\{0\}}u_{i}=\min_{\partial B_{1}(0)}u_{i}=0.\label{eq:add2}
\end{equation}
From (\ref{eq:estimate actually used-1}), for some uniform constant
$C=C(\rho)$, 
\[
|\nabla u_{i}(x)|=\underline{r}(t_{i})|\nabla u(\underline{r}(t_{i})x)|\leq\underline{r}(t_{i})\left(\frac{C}{\underline{r}(t_{i})|x|}\right)\leq\frac{C}{|x|},
\]
\[
|\nabla^{2}u_{i}(x)|=\underline{r}^{2}(t_{i})|\nabla^{2}u(\underline{r}(t_{i})x)|\leq\underline{r}(t_{i})^{2}\left(\frac{C}{\underline{r}(t_{i})^{2}|x|^{2}}\right)\leq\frac{C}{|x|^{2}}.
\]
Thus, $u_{i}$ is uniformly bounded in any compact set $E\subset\mathbb{R}^{4}\backslash\{0\}$
in $C^{2}(E)$ norm. By Arzela-Ascoli theorem, $u_{i}\to u_{\infty}$
subsequentially in $C^{1,\gamma}(E)$ for any $\gamma\in(0,1)$ and
$u_{\infty}\in C^{1,1}(\mathbb{R}^{4}\backslash\{0\})$.

Notice that $u_{i}$ satisfies the equation 
\[
\sigma_{2}(A(\rho,u_{i}))=\left(\underline{r}(t_{i})\right)^{4}e^{4t_{i}}p(u_{i}+t_{i})e^{4u_{i}}.
\]
By Lemma \ref{lem:alpha},
\[
\left(\underline{r}(t_{i})\right)^{4}e^{4t_{i}}e^{4u_{i}}p(u_{i}+t_{i})\to0,
\]
uniformly in any compact subsets of $\mathbb{R}^{4}\backslash\{0\}$.
Therefore, by $C_{loc}^{1,\gamma}$ convergence, $u_{\infty}$ satisfies
$A(\rho,u_{\infty})\in\partial\Gamma_{2}^{+}$ in viscosity sense
(see \cite{Crandall1992UsersGT} section 6 , \cite{Caffarelli1995FullyNE}
chapter 2). Since $A(\rho,u_{\infty})=\rho A(1,\frac{1}{\rho}u_{\infty})$,
from Theorem 1.18 in \cite{li2009local} or Theorem 1.5 in \cite{Li-ARMA06},
\begin{equation}
u_{\infty}=c\ln|x|+c',\label{eq:add3}
\end{equation}
where $c,c'$ are constants. In particular, by (\ref{eq:add2}), $\min_{|x|=1}u_{\infty}(x)=0$,
and then $c'=0$.

Next, we prove that $u_{\infty}(x)=\alpha\ln|x|$. Fix $x$ such that
$|x|>1$. By Theorem \ref{lem:Li-Nguyen Monotonicity formula }, the
concavity of $\underline{u}\circ\exp$ implies that 
\[
\underline{u}(r|x|)\frac{\ln r-\ln1}{\ln(r|x|)-\ln1}+\underline{u}(1)\frac{\ln(r|x|)-\ln r}{\ln(r|x|)-\ln1}\leq\underline{u}(r).
\]
Hence,
\begin{equation}
\frac{\underline{u}(r|x|)-\underline{u}(r)}{\ln(r|x|)-\ln r}\leq\frac{\underline{u}(r|x|)-\underline{u}(1)}{\ln(r|x|)-\ln1}.\label{eq:add23}
\end{equation}
By Lemma \ref{lem:alpha},
\begin{equation}
\lim_{r\to\infty}\frac{\underline{u}(r|x|)-\underline{u}(1)}{\ln(r|x|)-\ln1}=\alpha.\label{eq:new2}
\end{equation}
Thus, for any $\epsilon>0,$ by (\ref{eq:add23}) and (\ref{eq:new2}),
there exists a $N\in\mathcal{\mathbb{N}}$ such that for all $i>N,$
\[
\underline{u}(\underline{r}(t_{i})|x|)-\underline{u}(r)<(\alpha+\epsilon)\ln(|x|).
\]
Noticing that $t_{i}=\underline{u}(\underline{r}(t_{i}))$ by (\ref{eq:underline=00007Bu=00007D(underline=00007Br=00007D(t))=00003Dt}),
we use (\ref{eq:add20}) to conclude that for any $i>N,$
\begin{align}
u_{i}(x) & =u(\underline{r}(t_{i})x)-t_{i}\label{eq:add24}\\
 & \leq\underline{u}(\underline{r}(t_{i})|x|)+C-\underline{u}(\underline{r}(t_{i}))\nonumber \\
 & \leq(\alpha+\epsilon)\ln|x|+C.\nonumber 
\end{align}
Thus, $\limsup_{i\to\infty}\frac{u_{i}(x)}{\ln|x|}\leq\alpha.$ On
the other hand, by (\ref{eq:add4}),
\[
\frac{\underline{u}(r|x|)-\alpha\ln\left(r|x|\right)-\left(\underline{u}(r)-\alpha\ln r\right)}{\ln|x|}\geq0.
\]
Hence, a similar argument shows that $\liminf_{i\to\infty}\frac{u_{i}(x)}{\ln|x|}\geq\alpha.$
We have thus proved that $c=\lim_{i\to\infty}\frac{u_{i}(x)}{\ln|x|}=\alpha.$
\end{proof}
We finish this section by stating the following corollary, which will
be useful later.
\begin{cor}
\label{cor: gradient convergence} For a sequence $t_{i}\to-\infty$,
$u_{i}$ is given as in (\ref{eq:definition}). After passing to a
possible subsequence, we have
\begin{align*}
 & \lim_{i\to\infty}\left\Vert \frac{\nabla u_{i}(y)}{|\nabla u_{i}(y)|}-\frac{y}{|y|}\right\Vert _{L^{\infty}(\tilde{L}_{i})}=0,\\
 & \lim_{i\to\infty}\left\Vert y\cdot\nabla u_{i}(y)-\alpha\right\Vert _{L^{\infty}(\tilde{L}_{i})}=0.
\end{align*}

\end{cor}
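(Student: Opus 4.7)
The plan is to combine the $C^{1,\gamma}_{\mathrm{loc}}(\mathbb{R}^{4}\setminus\{0\})$ convergence $u_{i}\to\alpha\ln|y|$ provided by Theorem~\ref{thm:convergence of level set} with a confinement of the rescaled level sets $\tilde{L}_{i}$ to a fixed compact annulus. From the definition $\tilde{L}_{i}=L_{t_{i}}/\underline{r}(t_{i})$, the closest point of $\tilde{L}_{i}$ to the origin has norm exactly $1$; combined with Proposition~\ref{prop:level set bounded}, for all $i$ sufficiently large we have $\tilde{L}_{i}\subset\{1\le|y|\le C\}$ for a constant $C=C(\rho)$. The convergence of Theorem~\ref{thm:convergence of level set} then applies uniformly on the slightly enlarged compact annulus $K:=\{1/2\le|y|\le 2C\}$, yielding, after extracting a subsequence, both $u_{i}\to\alpha\ln|y|$ and $\nabla u_{i}\to\alpha\,y/|y|^{2}$ uniformly on $K$.

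The next step is to upgrade this to the Hausdorff collapse of $\tilde{L}_{i}$ onto the unit sphere. For any fixed $\epsilon>0$, the function $\alpha\ln|y|$ is continuous and bounded away from zero on $K\setminus U_{\epsilon}$, where $U_{\epsilon}:=\{1-\epsilon\le|y|\le 1+\epsilon\}$. Since $u_{i}\equiv 0$ on $\tilde{L}_{i}\subset K$ while $u_{i}\to\alpha\ln|y|$ uniformly on $K$, for $i$ large we must have $\tilde{L}_{i}\subset U_{\epsilon}$, and consequently $\max_{y\in\tilde{L}_{i}}\bigl|\,|y|-1\,\bigr|\to 0$.

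Both assertions then follow by reading off the value of the limiting gradient $\alpha\,y/|y|^{2}$ near the unit sphere. Uniform convergence $\nabla u_{i}(y)\to\alpha\,y/|y|^{2}$ on $U_{\epsilon}$, combined with $|y|\to 1$ uniformly on $\tilde{L}_{i}$, gives $\nabla u_{i}(y)\to\alpha\,y$ uniformly on $\tilde{L}_{i}$ and hence $|\nabla u_{i}(y)|\to|\alpha|>0$ (nonvanishing by Lemma~\ref{lem:alpha}). The normalization $\nabla u_{i}/|\nabla u_{i}|$ therefore converges uniformly on $\tilde{L}_{i}$ to $\mathrm{sgn}(\alpha)\,y/|y|$, yielding the first claim. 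The second claim is immediate from the identity $y\cdot\bigl(\alpha\,y/|y|^{2}\bigr)=\alpha$ together with the same uniform gradient convergence on $\tilde{L}_{i}$.

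The only real technical point is reconciling the locally uniform convergence on a fixed compact set with $L^{\infty}$ norms taken over the varying sets $\tilde{L}_{i}$; Proposition~\ref{prop:level set bounded} is precisely what allows us to trap all $\tilde{L}_{i}$ inside one fixed compact annulus, after which the problem reduces to uniform convergence on that annulus.
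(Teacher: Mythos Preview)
Your approach is essentially the paper's: use Proposition~\ref{prop:level set bounded} to trap all $\tilde{L}_i$ inside a fixed compact annulus $E$, then invoke the $C^{1,\gamma}$ convergence of Theorem~\ref{thm:convergence of level set} on that annulus. The paper's argument is slightly leaner in that it skips your Hausdorff-collapse step: since $y\cdot\nabla(\alpha\ln|y|)=\alpha$ and $\nabla(\alpha\ln|y|)/|\nabla(\alpha\ln|y|)|=\mathrm{sgn}(\alpha)\,y/|y|$ hold identically on all of $E$, the uniform gradient convergence on $E$ already gives both limits without first pinning $\tilde{L}_i$ near $S^3$ (that collapse is established separately in Proposition~\ref{prop:approximation map}). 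One caveat: your computation correctly yields $\nabla u_i/|\nabla u_i|\to\mathrm{sgn}(\alpha)\,y/|y|=-y/|y|$ since $\alpha<-1$, so ``yielding the first claim'' should be read modulo a sign; the corollary as printed carries a sign slip, and what is actually used later (via $\nu=-\nabla u/|\nabla u|$) is consistent with what you derived.
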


\begin{proof}
By Proposition \ref{prop:level set bounded}, we may pick a $R\in\mathbb{R}$
such that $\tilde{L}_{i}=L_{t_{i}}/\underline{r}(t_{i})\subset E=\overline{B_{R}(0)\backslash B_{\frac{1}{R}}(0)}$.
Thus, for any $y\in E,$ $|\nabla\ln|y||=\frac{1}{|y|}\in[1/R,R]$.
By Theorem \ref{thm:convergence of level set}, there exists a large
$N\in\mathbb{N}$ such that for all $i>N,$ and $y\in\tilde{L}_{t_{i}}$
\[
\frac{1}{2R}<|\nabla u_{i}(y)|<\frac{2}{R}.
\]
Thus, by Theorem \ref{thm:convergence of level set} again, for $y\in\tilde{L}_{i}\subset E$
\begin{align*}
\lim_{i\to\infty}\left\Vert \frac{\nabla u_{i}(y)}{|\nabla u_{i}(y)|}-\frac{y}{|y|}\right\Vert _{L^{\infty}(\tilde{L}_{i})} & =0,\\
\lim_{i\to\infty}\left\Vert y\cdot\nabla u_{i}(y)-\alpha\right\Vert _{L^{\infty}(\tilde{L}_{i})} & =0.
\end{align*}
Here the second identity is due to the fact that $y\cdot\nabla\ln|y|=1.$
\end{proof}

\section{Proof of main theorems\label{sec:Proof-of-main theorem}}

In this section, we prove Theorem \ref{thm:Maintheorem}. Using our
Pohozaev identity, the quasi-local mass $M(t_{i})$ can be expressed
by terms defined on $\tilde{L}_{i}$. We argue that $\tilde{L}_{t_{i}}$
sub-converges to the standard round sphere in the Gromov-Hausdorff
sense. Once we find the limits of the related terms on $\tilde{L}_{t_{i}}$,
we can argue that $M(t_{i})$ converges to $0$ as $i\rightarrow\infty$.
Thus, by the monotonicity, $M(t)\equiv0$, and we can establish Theorem
\ref{thm:Maintheorem} using the rigidity result of $M(t)$.

By Remark \ref{rem:+/-cone}, without loss of generality, we may assume
that $\beta=4$ and we use the same notation as in Section 7.

By Theorem \ref{thm:convergence of level set}, Corollary \ref{cor: gradient convergence}
and Proposition \ref{prop:level set bounded}, after passing to a
subsequence, we may assume the following conditions: \renewcommand{\theenumi}{\roman{enumi}} \renewcommand{\labelenumi}{\theenumi)}
\begin{enumerate}
\item \label{enu:For-a-fixed}For some fixed $R>1$, we assume $\tilde{L}_{i}=\{u_{i}=0\}\subset E=\overline{B_{R}(0)\backslash B_{\frac{1}{R}}(0)}$
for all $i\in\mathbb{N};$
\item For some $\gamma\in(0,1)$, and $y\in E,$ 
\begin{align}
\|u_{i}(y)-\alpha\ln|y|\|_{C^{1,\gamma}(E)} & \to0,\label{eq:converge in E sec 8}\\
\lim_{i\to\infty}|y\cdot\nabla u_{i}(y)-\alpha|= & 0;\label{eq:add52}
\end{align}
\item In particular, we assume that for all $i$, 
\begin{equation}
|y\cdot\nabla u_{i}(y)|\geq|\frac{\alpha}{2}|;\label{eq:add51}
\end{equation}
\item We assume that $t_{i}<\underline{u}(1)$.
\end{enumerate}
We set up some notations. We use polar coordinate $(r,\theta)$ for
all $y\in E$, where $r=|y|$ and $\theta=\frac{y}{|y|}\in S^{3}.$
We define the projection $\pi:E\to S^{3}:y\mapsto\frac{y}{|y|},$
and $\pi_{i}=\pi|_{\tilde{L}_{i}}$. Let $dl_{0}$ be the standard
volume form on the unit sphere $S^{3}.$ Let $d\tilde{l}_{i}$ be
the volume element for $\tilde{L}_{i}.$\renewcommand{\theenumi}{\arabic{enumi}} \renewcommand{\labelenumi}{\theenumi .}
\begin{prop}
\label{prop:approximation map}We use notations as listed above. Then,
for any $i$ we have the following claims:
\end{prop}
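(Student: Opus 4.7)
The plan is to exploit the uniform $C^{1,\gamma}$ convergence $u_i \to \alpha\ln|y|$ on the annular region $E$ together with the uniform lower bound (9.3) on the radial component of $\nabla u_i$. Since the limiting function $\alpha\ln|y|$ has $S^3$ as a regular level set, a uniform implicit function theorem argument should force each $\tilde L_i$ to be a radial graph over $S^3$ for large $i$, and this graph representation converges in $C^{1,\gamma}$ to the identity embedding.

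More concretely, first I would establish that $\pi_i:\tilde L_i \to S^3$ is a $C^{1,\gamma}$ diffeomorphism. For each $\theta\in S^3$, I would consider the ray $r\mapsto u_i(r\theta)$ on an interval $[1/R,R]$. Condition (9.3) says $\frac{d}{dr}u_i(r\theta)=\theta\cdot\nabla u_i(r\theta)/1$ has a fixed sign and is bounded away from zero on $\tilde L_i$; together with the open-condition version inherited from $C^{1,\gamma}$-closeness to $\alpha\ln|y|$, this radial monotonicity holds on the full annulus for $i$ large, so the ray meets $\tilde L_i$ in exactly one point $r_i(\theta)\theta$. The implicit function theorem applied to $F_i(r,\theta):=u_i(r\theta)$ produces a $C^{1,\gamma}$ function $r_i:S^3\to(1/R,R)$, giving $\pi_i^{-1}(\theta)=r_i(\theta)\theta$.

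Next I would show the quantitative convergence $r_i \to 1$ in $C^{1,\gamma}(S^3)$, which should be Claim~2 of the proposition. By (9.2), $u_i(r\theta)-\alpha\ln r \to 0$ in $C^{1,\gamma}$ uniformly on $E$. Solving $u_i(r_i(\theta)\theta)=0$ and comparing with the solution $r\equiv 1$ of $\alpha\ln r=0$, the implicit function theorem (with uniform bounds coming from (9.3)) yields $\|r_i-1\|_{C^{1,\gamma}(S^3)}\to 0$. From this, standard pullback computations give that the induced metric on $\tilde L_i$, pulled back via $\pi_i^{-1}$, converges to the round metric on $S^3$ in $C^{0,\gamma}$, and consequently the volume form $(\pi_i^{-1})^* d\tilde l_i$ converges uniformly to $dl_0$. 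Any claim of the form $\int_{\tilde L_i}\Phi_i\,d\tilde l_i-\int_{S^3}\Phi_\infty\,dl_0\to 0$ for continuous $\Phi_i$ with suitable limits $\Phi_\infty$ then reduces to this measure convergence plus (9.2).

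The main obstacle is making the implicit function theorem quantitative and uniform in $i$: one must verify that the lower bound on $|\partial_r u_i|$ inherited from (9.3) holds not merely on $\tilde L_i$ but in a fixed tubular neighborhood in the radial direction, with bounds independent of $i$. This is where the choice of $R$ in condition~(i) and the $C^{1,\gamma}$ convergence combine. Once uniform ellipticity of the implicit equation is secured, the rest is a routine consequence of compactness of $E$ and the Arzel\`a--Ascoli type argument already used in Theorem~\ref{thm:convergence of level set}. Crucially, no further use of the PDE is required here; the proposition is a geometric corollary of the blow-down analysis in the previous section.
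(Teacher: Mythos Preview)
Your approach is correct and essentially the same as the paper's: both represent $\tilde L_i$ as a radial graph $r_i(\theta)\theta$ over $S^3$ via the implicit function theorem, use the lower bound on the radial derivative to get regularity and bijectivity, and then read off Gromov--Hausdorff and volume-form convergence from $r_i\to 1$ and $\nabla_\theta r_i\to 0$. Two small remarks: first, the paper's injectivity argument is slightly more elementary than a uniform IFT---if a ray met $\tilde L_i$ twice, the mean value theorem would produce a point in $E$ where $\theta\cdot\nabla u_i=0$, contradicting condition~(iii); second, your ``main obstacle'' is not really an obstacle, because condition~(iii) is arranged to hold on all of $E$ (it follows from the uniform $C^{1,\gamma}$ convergence $u_i\to\alpha\ln|y|$ on $E$), not merely on $\tilde L_i$, so no separate tubular-neighborhood argument is needed.
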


\begin{enumerate}
\item $\tilde{L}_{i}$ is a regular $C^{2}$ hyper-surface;
\item $\pi_{i}:\tilde{L}_{i}\to S^{3}$ is a $C^{1,\gamma}$ diffeomorphism;
In particular, $\tilde{L}_{i}$ is star-shaped; 
\item \label{enu:condition }$\tilde{L}_{i}$ converges to $S^{3}$ in the
Gromov-Hausdorff sense;
\item For any $\theta\in S^{3},$ we have
\[
\lim_{i\to\infty}((\pi_{i}^{-1})^{*}d\tilde{l}_{i})(\theta)=dl_{0}(\theta).
\]
\end{enumerate}
\begin{proof}
Claim 1 is trivial due to (\ref{eq:add51}). 

For Claim 2, we first prove that $\pi_{i}$ is both surjective and
injective. Since $t_{i}<\underline{u}(1)$, $\tilde{L}_{i}$ is closed,
compact, and enclosing $0$, $\pi_{i}$ is a surjection. If $\pi_{i}$
is not injective, then there exists $\theta_{0}\in S^{3}$ and $0<\psi_{1}<\psi_{2}$
such that $\psi_{1}\theta_{0},\psi_{2}\theta_{0}\in\tilde{L}\subset E$
and $u_{i}(\psi_{1}\theta_{0})=u_{i}(\psi_{2}\theta_{0})=0$. By the
mean value theorem, there exists $\psi_{0}\in(\psi_{1},\psi_{2})$
such that 
\begin{equation}
\nabla u_{i}(\psi_{0}\theta_{0})\cdot\theta_{0}=0.\label{eq:contradiction for not being injective}
\end{equation}
However, by (\ref{eq:add51})
\begin{equation}
|\nabla u_{i}(\psi_{0}\theta_{0})\cdot\theta_{0}|\geq\frac{|\alpha|/2}{\psi_{0}}\geq\frac{|\alpha|}{2R},\label{eq:add59}
\end{equation}
which is contradicted to (\ref{eq:contradiction for not being injective}).
Thus, $\pi_{i}$ is bijective. 

Next, we prove that $\pi_{i}$ is $C^{1,\gamma}$ regular. Let $y=r\theta\in\tilde{L}_{i}$
and 
\[
u_{i}(r\theta)=0.
\]
By (\ref{eq:add51}) and \ref{enu:For-a-fixed}), 
\[
|\frac{\partial u_{i}}{\partial r}(y)|=\frac{|y\cdot\nabla u_{i}|}{|y|}\geq\frac{|\alpha|}{2R}>0.
\]
By the implicit function theorem, we may define $|y|=r_{i}(\theta)$
as a local $C^{1,\gamma}$ function. In addition, in a local coordinate
$(\theta^{1},\theta^{2},\theta^{3})$ on $S^{3}$, 
\begin{equation}
\frac{\partial r_{i}}{\partial\theta^{k}}=-\frac{\partial u_{i}}{\partial\theta^{k}}/\frac{\partial u_{i}}{\partial r}.\label{eq: differential  of partialrover partial theta}
\end{equation}

Since $\pi_{i}$ is a bijective, $r_{i}(\theta)$ is a globally defined
$C^{1,\gamma}$ function, which leads to the conclusion of Claim 2.

To prove Claim 3, it is sufficient to show that
\begin{equation}
\lim_{i\to\infty}\max_{y\in\tilde{L}_{i}}||y|-1|=0.\label{eq:tilde L_i GH convergence.}
\end{equation}
Since $u_{i}(y)=0$ for any $y\in\tilde{L}_{i}$, we get 
\[
|\alpha\ln|y||\leq\|u_{i}-\alpha\ln|y|\|_{C^{1,\gamma}(E)}.
\]
Hence, by the mean value theorem,
\[
\max_{y\in\tilde{L}_{i}}||y|-1|\leq\frac{\|u_{i}(y)-\alpha\ln|y|\|_{C^{1,\gamma}(E)}}{|\alpha|\min_{y\in E}\frac{1}{|y|}}\leq\frac{R}{|\alpha|}\cdot\|u_{i}-\alpha\ln|y|\|_{C^{1,\gamma}(E)},
\]
which leads to (\ref{eq:tilde L_i GH convergence.}). In addition,
by (\ref{eq:add52}) (\ref{eq: differential  of partialrover partial theta}),
we have $\frac{\partial u_{i}(y)}{\partial r}=\frac{y}{|y|}\cdot\nabla u_{i}(y)\to\alpha$
and 
\begin{equation}
\lim_{i\to\infty}\nabla_{\theta}r_{i}(\theta)=0\label{eq:add53}
\end{equation}
uniformly in $C^{\gamma}(S^{3})$ 

For Claim 4, noting that $dl_{0}$ is the standard volume form on
$S^{3}$, we follow the standard volume form computation for star-shaped
hyper-surface to get
\begin{align}
((\pi_{i}^{-1})^{*}d\tilde{l}_{i})(\theta) & =\sqrt{\det\left(r_{i}^{2}{\rm Id}+\nabla_{\theta}r_{i}\otimes\nabla_{\theta}r_{i}\right)}dl_{0}(\theta)\label{eq:add54}\\
 & =r_{i}^{2}\sqrt{r_{i}^{2}+|\nabla_{\theta}r_{i}|^{2}}dl_{0}(\theta).\nonumber 
\end{align}
 We use (\ref{eq:tilde L_i GH convergence.}) (\ref{eq:add53}) and
(\ref{eq:add54}) to prove Claim 4.
\end{proof}
\begin{rem}
For a continuous function $\psi_{i}$ defined on $E$, if $\psi_{i}\to\psi$
uniformly, then $\left(\pi_{i}^{-1}\right)^{*}\psi_{i}\to\psi|_{S^{3}}$
in $C^{0}(S^{3})$. Hence, by Proposition \ref{prop:approximation map},
\begin{equation}
\int_{\tilde{L}_{i}}\psi_{i}d\tilde{l}_{i}=\int_{S^{3}}\left(\pi_{i}^{-1}\right)^{*}\left(\psi_{i}d\tilde{l}_{i}\right)\to\int_{S^{3}}\psi|_{S^{3}}dl_{0}.\label{eq:convergence of measure}
\end{equation}
\end{rem}

Finally, we are ready to prove our main theorem.
\begin{proof}
[Proof of Theorem \ref{thm:Maintheorem}]We use notations as above.
Let $\Omega_{t}=\{x:u(x)>t\}$. By (\ref{eq:MASS new def}), we consider
the sequence
\[
M(t_{i})=2N(t_{i})Q(t_{i})+\frac{\rho}{8}Q^{4}(t_{i})-12P(t_{i}).
\]

Since $Q(t)$ and $N(t)$ are scaling invariant, we have
\[
Q(t_{i})=-\left(\frac{1}{|S^{3}|}\int_{\tilde{L}_{i}}|\nabla u_{i}|^{3}d\tilde{l}_{i}\right)^{\frac{1}{3}},\ N(t_{i})=\frac{1}{2|S^{3}|}\int_{\tilde{L}_{i}}\left(\tilde{H}(y)|\nabla u_{i}|^{2}-\rho|\nabla u_{i}|^{3}\right)d\tilde{l_{i}},
\]
where $\tilde{H}$ is the mean curvature of $\tilde{L}_{i}$. From
Corollary \ref{cor:Pohozaev for K=00003D1} 
\begin{align*}
12P(t_{i}) & =\frac{3}{2|S^{3}|}\int_{L_{t_{i}}}\left(-\frac{3}{4}\rho|\nabla u|^{4}\langle x,\nu\rangle+\frac{2}{3}H|\nabla u|^{3}\langle x,\nu\rangle\right)dl(x)\\
 & =\frac{3}{2|S^{3}|}\int_{\tilde{L}_{i}}\left(-\frac{3}{4}\rho|\nabla u_{i}|^{4}\langle y,\nu\rangle+\frac{2}{3}\tilde{H}|\nabla u_{i}|^{3}\langle y,\nu\rangle\right)d\tilde{l_{i}}(y).
\end{align*}
Thus, 
\begin{align}
M(t_{i})= & \frac{\rho}{8}Q^{4}(t_{i})+\frac{3}{2|S^{3}|}\int_{\tilde{L}_{i}}\left(-\frac{3}{4}\rho|\nabla u_{i}|^{4}\langle y,\nu\rangle+\frac{2}{3}\tilde{H}|\nabla u_{i}|^{3}\langle y,\nu\rangle\right)d\tilde{l_{i}}\label{eq:M(t_i)}\\
 & +2Q(t_{i})\left(\frac{1}{2|S^{3}|}\int_{\tilde{L}_{i}}[\tilde{H}(y)|\nabla u_{i}|^{2}-\rho|\nabla u_{i}|^{3}]d\tilde{l_{i}}(y)\right)\nonumber \\
= & \frac{9\rho}{8}\left(Q^{4}(t_{i})+\frac{1}{|S^{3}|}\int_{\tilde{L}_{i}}|\nabla u_{i}|^{3}\langle y,\nabla u_{i}\rangle d\tilde{l_{i}}\right)\nonumber \\
 & +\frac{1}{|S^{3}|}\int_{\tilde{L}_{i}}\tilde{H}(y)|\nabla u_{i}|^{2}\left(Q(t_{i})-\langle y,\nabla u_{i}\rangle\right)d\tilde{l_{i}}.\nonumber 
\end{align}
We have used the fact that $|\nabla u_{i}|\nu=-\nabla u_{i}$ on the
level set $\tilde{L}_{i}$.  We claim that 
\[
\lim_{i\to\infty}M(t_{i})=0.
\]
Now take the limits of two terms on the right-hand side of (\ref{eq:M(t_i)})
separately.

As $i\to\infty,$ from Theorem \ref{thm:convergence of level set},
$|\nabla u_{i}|\to-\alpha$, $y\cdot\nabla u_{i}(y)\to\alpha$ on
$E$ uniformly. By (\ref{eq:convergence of measure}) and Corollary
\ref{cor: gradient convergence}, as $i\to\infty,$
\begin{align}
Q(t_{i})=-\left(\frac{1}{|S^{3}|}\int_{\tilde{L}_{i}}|\nabla u_{i}(y)|^{3}d\tilde{l}_{i}(y)\right)^{\frac{1}{3}} & \to\alpha,\label{eq: z converges to alpha}
\end{align}
and 
\[
\frac{1}{|S^{3}|}\int_{\tilde{L}_{i}}|\nabla u_{i}|^{3}\langle y,\nabla u_{i}\rangle d\tilde{l}_{i}(y)\to-\alpha^{4}.
\]
In addition, as $i\to\infty,$
\begin{equation}
Q(t_{i})-\langle y,\nabla u_{i}\rangle\to0\quad\text{uniformly\,on}\,\tilde{L}_{i}.\label{eq:Hintegral converge.}
\end{equation}
By (\ref{eq: z converges to alpha}), as $i\to\infty,$
\begin{equation}
Q^{4}(t_{i})+\frac{1}{|S^{3}|}\int_{\tilde{L}_{i}}|\nabla u_{i}|^{3}\langle y,\nabla u_{i}\rangle d\tilde{l}_{i}\to|S^{3}|\left(\alpha^{4}-\alpha^{4}\right)=0.\label{eq:Mass first term goes to 0}
\end{equation}

Next, we claim that
\begin{equation}
\int_{\tilde{L}_{i}}\tilde{H}(y)|\nabla u_{i}|^{2}\left(Q(t_{i})-\langle y,\nabla u_{i}\rangle\right)d\tilde{l}_{i}\to0\quad\text{as}\quad i\rightarrow\infty.\label{eq:Integral of H goes to 0}
\end{equation}
Since by (\ref{eq:Hintegral converge.}),
\[
\left|\int_{\tilde{L}_{i}}\tilde{H}|\nabla u_{i}|^{2}\left(Q(t_{i})-\langle y,\nabla u_{i}\rangle\right)d\tilde{l}_{i}\right|\leq\max_{y\in\tilde{L}_{i}}|Q(t_{i})-\langle y,\nabla u_{i}\rangle|\cdot\int_{\tilde{L}_{i}}|\tilde{H}|\cdot|\nabla u_{i}|^{2}d\tilde{l}_{i},
\]
it is sufficient to prove that $\int_{\tilde{L}_{i}}\tilde{H}|\nabla u_{i}|^{2}d\tilde{l}_{i}$
is uniformly bounded. We notice that $H$, the mean curvature of $L_{t}$,
is positive by (\ref{eq:sigma_1(tilde A) mean curvature is positive}).
By (\ref{eq: z converges to alpha}), $Q(t_{i})$ is uniformly bounded
and 
\begin{align*}
\frac{1}{|S^{3}|}\int_{\tilde{L}_{t_{i}}}|\tilde{H}|\cdot|\nabla u_{i}|^{2}d\tilde{l}_{i} & =\frac{1}{|S^{3}|}\int_{L_{t_{i}}}H|\nabla u|^{2}dl\\
 & =2N(t_{i})+\rho|Q(t_{i})|^{3}\\
 & \leq\frac{1}{|S^{3}|}\int_{\mathbb{R}^{4}}f(u)dx+\rho|Q(t_{i})|^{3}
\end{align*}
is uniformly bounded. We have thus proved (\ref{eq:Integral of H goes to 0}).
By (\ref{eq:M(t_i)}), (\ref{eq:Mass first term goes to 0}) and (\ref{eq:Integral of H goes to 0}),
\begin{equation}
\lim_{i\to\infty}M(t_{i})=0.\label{eq:add61}
\end{equation}

It is clear that $M(\tau')=0$, where $\tau'=\max_{\mathbb{R}^{n}}u$.
By Theorem \ref{thm:mass theorem}, we conclude that $M(t)\equiv0$
and then $u$ is rotationally symmetric. 
\end{proof}

\section{Radial solutions \label{sec:Radial-solutions}}

In this section, we study the radial solution $u(|x|)$ of (\ref{eq:PDE system})
where $f(u)=\frac{3}{2}e^{4u}$.  Denote $r=|x|$. By (\ref{eq:PDE system}),
$u(r)$ satisfies the following differential equation:
\begin{equation}
\left(3u''+\frac{3u'}{r}\right)\left(\frac{u'}{r}+\frac{\rho\left(u'\right)^{2}}{2}\right)=\frac{3}{2}e^{4u},\ u'(0)=0.\label{eq:ode}
\end{equation}
Let $s=\ln r$. We denote $u_{s}:=\frac{du(e^{s})}{ds}=ru'.$ Then,
from (\ref{eq:ode}) we have
\begin{equation}
3u_{ss}\left(u_{s}+\frac{\rho}{2}\left(u_{s}\right)^{2}\right)-\frac{3}{2}e^{4(u+s)}=0,\ u_{s}(-\infty)=0.\label{eq:u_s ode-1}
\end{equation}
The condition that $\lambda(A(\rho,u))\in\Gamma_{2}^{+}$ implies
that 
\begin{equation}
u_{s}+\frac{\rho}{2}\left(u_{s}\right)^{2}<0,u_{ss}<0,\label{eq:positive cone condtion for ode}
\end{equation}
and it also implies $-\frac{2}{\rho}<u_{s}<0$ for $s\in(-\infty,\infty)$.
Note (\ref{eq:u_s ode-1}) is invariant if we rescale $u$ by 
\begin{equation}
\tilde{u}(e^{s})=u(e^{s+c})+c.\label{eq:shift a const}
\end{equation}

\begin{lem}
(\ref{eq:u_s ode-1}) admits a first integral: 
\begin{equation}
(u_{s})^{2}\left(\frac{\rho}{4}u_{s}^{2}+\frac{2+\rho}{3}u_{s}+1\right)-\frac{1}{4}e^{4(u+s)}\equiv0.\label{eq:first inegral 0}
\end{equation}
\end{lem}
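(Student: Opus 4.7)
The plan is to exhibit
\[
I(s) := (u_{s})^{2}\!\left(\tfrac{\rho}{4}u_{s}^{2} + \tfrac{2+\rho}{3}u_{s} + 1\right) - \tfrac{1}{4}e^{4(u+s)}
\]
as a conserved quantity of the ODE (\ref{eq:u_s ode-1}) by direct differentiation, and then pin the constant using the boundary data at $s=-\infty$.

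First, I would differentiate the polynomial part. Expanding $(u_{s})^{2}\!\left(\frac{\rho}{4}u_{s}^{2} + \frac{2+\rho}{3}u_{s} + 1\right) = \frac{\rho}{4}u_{s}^{4} + \frac{2+\rho}{3}u_{s}^{3} + u_{s}^{2}$ and taking $\frac{d}{ds}$ yields
\[
u_{ss}\bigl[\rho u_{s}^{3} + (2+\rho)u_{s}^{2} + 2 u_{s}\bigr] = u_{ss}\,u_{s}\,(\rho u_{s}+2)(u_{s}+1),
\]
where the key algebraic step is the factorization $\rho u_{s}^{2} + (2+\rho)u_{s} + 2 = (\rho u_{s}+2)(u_{s}+1)$. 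Meanwhile, $\frac{d}{ds}\!\left(-\tfrac14 e^{4(u+s)}\right) = -(u_{s}+1)e^{4(u+s)}$.

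Second, I would rewrite the ODE (\ref{eq:u_s ode-1}) in the equivalent form
\[
u_{ss}\,u_{s}\,(\rho u_{s}+2) = e^{4(u+s)},
\]
and substitute this into the previous expression. The two contributions to $I'(s)$ then combine as
\[
I'(s) = (u_{s}+1)\bigl[e^{4(u+s)} - e^{4(u+s)}\bigr] = 0,
\]
so $I$ is constant along any solution of (\ref{eq:u_s ode-1}).

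Finally, I would evaluate the constant at $s \to -\infty$. The hypothesis $u_{s}(-\infty)=0$ kills the polynomial term. For the exponential term, since $u(r)$ is a smooth radial solution with $u'(0)=0$, the value $u(e^{s})$ stays bounded as $s \to -\infty$, hence $u+s \to -\infty$ and $e^{4(u+s)} \to 0$. Therefore $I(-\infty)=0$, and by constancy $I \equiv 0$, which is exactly (\ref{eq:first inegral 0}). The only non-mechanical point is the factorization that makes $I'$ collapse against the ODE; everything else is a routine derivative and a limit at the origin.
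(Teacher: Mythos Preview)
Your proof is correct and is essentially the same computation as the paper's, just run in the reverse direction: the paper multiplies the ODE by the integrating factor $(u_s+1)$ and recognizes the result as a total derivative, while you differentiate $I(s)$ directly and factor out $(u_s+1)$ before invoking the ODE. Your treatment of the limit $e^{4(u+s)}\to 0$ at $s\to -\infty$ is in fact slightly more explicit than the paper's.
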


\begin{proof}
Multiply $(u_{s}+1)$ to both sides of (\ref{eq:u_s ode-1}) and get
\begin{align*}
0 & =3u_{ss}(u_{s}+1)\left(u_{s}+\frac{\rho}{2}\left(u_{s}\right)^{2}\right)-\frac{3}{2}e^{4(u+s)}\left(u_{s}+1\right)\\
 & =\frac{3}{2}\left[(u_{s})^{2}\left(\frac{\rho}{4}u_{s}^{2}+\frac{2+\rho}{3}u_{s}+1\right)-\frac{1}{4}e^{4(u+s)}\right]_{s}.
\end{align*}
Thus, we obtain a first integral of (\ref{eq:u_s ode-1}):
\begin{equation}
(u_{s})^{2}\left(\frac{\rho}{4}u_{s}^{2}+\frac{2+\rho}{3}u_{s}+1\right)-\frac{1}{4}e^{4(u+s)}\equiv c_{0},\label{eq:first integral}
\end{equation}
for some constant $c_{0}$. As $s\to-\infty,$ we have $\lim_{s\to-\infty}u_{s}=\lim_{r\to0}ru'=0$.
Hence, $c_{0}=0$ and 
\begin{equation}
(u_{s})^{2}\left(\frac{\rho}{4}u_{s}^{2}+\frac{2+\rho}{3}u_{s}+1\right)-\frac{1}{4}e^{4(u+s)}=0.\label{eq:first integral.}
\end{equation}
\end{proof}

\begin{thm}
For $\rho\in[0,2)$, (\ref{eq:ode}) admits a unique radial solution
up to a rescaling. 
\end{thm}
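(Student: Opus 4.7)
The plan is to reduce the second-order ODE to an autonomous first-order ODE for $v = u_s$ using the first integral (\ref{eq:first integral.}) from the previous lemma, and then to exploit the autonomous structure to conclude uniqueness up to the rescaling (\ref{eq:shift a const}).

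By the positive cone condition (\ref{eq:positive cone condtion for ode}), $v_s = u_{ss} < 0$, so $v$ is strictly decreasing in $s$ and satisfies $v \in (-2/\rho, 0)$. Substituting the first integral $e^{4(u+s)} = 4v^2 P(v)$ into (\ref{eq:u_s ode-1}) and canceling a common factor of $v$ yields the autonomous equation
\[
v_s = \frac{2\,v\,P(v)}{1 + \tfrac{\rho}{2}v}.
\]
Near the singular fixed point $v = 0$, linearization gives $v_s = 2v + O(v^2)$, so every solution with $v(-\infty) = 0$ obeys $v(s) \sim -Ce^{2s}$ for some $C > 0$ as $s \to -\infty$.

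For uniqueness, I would argue that the incoming behavior $v \sim -Ce^{2s}$ at the fixed point $v = 0$ is parameterized only by $C > 0$, and different values of $C$ differ by an $s$-translation $s \mapsto s + c$. Combined with the autonomous structure of the reduced ODE, this shows that any two solutions with $v(-\infty) = 0$ are translates of each other. A direct check under (\ref{eq:shift a const}) gives $\tilde v(s) = v(s+c)$ and $\tilde w(s) = w(s+c)$, so the $s$-translation corresponds exactly to the asserted rescaling. Hence any two radial solutions of (\ref{eq:ode}) are related by (\ref{eq:shift a const}).

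For existence, fix any initial data $(s_0, v_0)$ with $v_0 \in (-2/\rho, 0)$ and solve the autonomous ODE by standard theory; the linearization near $v = 0$ ensures backward extension to $s = -\infty$ with $v(s) \to 0^-$ exponentially. The function $u$ is then recovered via $u + s = \frac{1}{4}\log(4v^2 P(v))$ from the first integral, and the asymptotic $v \sim -Ce^{2s}$ translates into $u'(r) = v/r \sim -Cr \to 0$ and $u''$ bounded near $r = 0$, giving the required $C^2$ regularity at the origin.

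The main obstacle is the singular character of the fixed point $v = 0$, where standard ODE uniqueness does not apply directly and one must combine the linearization with the autonomous structure to select the unique incoming orbit. A secondary subtlety is that the admissible $v$-range depends on $\rho$: for $\rho \in [0,1]$ the orbit terminates at a root $v_+$ of $P$ lying in $(-2/\rho, 0)$ (a regular equilibrium approached in infinite $s$-time), while for $\rho \in (1,2)$ the orbit approaches $v = -2/\rho$ where the reduced ODE becomes singular, so the behavior there must be analyzed more carefully.
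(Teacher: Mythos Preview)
Your approach is essentially the paper's: both use the first integral to reduce (\ref{eq:u_s ode-1}) to a separable first-order relation for $v=u_s$, and then read off existence and uniqueness from that relation. The paper writes this as
\[
\int_{-\epsilon}^{u_s}\frac{1+\frac{\rho}{2}x}{2x\bigl(\frac{\rho}{4}x^{2}+\frac{2+\rho}{3}x+1\bigr)}\,dx=s,
\]
after gauge-fixing $u_s(0)=-\epsilon$; you phrase it as the autonomous flow $v_s=2vP(v)/(1+\tfrac{\rho}{2}v)$.

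The one place where the paper's route is cleaner is uniqueness. You try to pin down the orbit by matching the asymptotic $v\sim -Ce^{2s}$ at the degenerate fixed point $v=0$, which, as you note, requires extra care since standard ODE uniqueness does not apply there. The paper sidesteps this entirely: after using the rescaling (\ref{eq:shift a const}) to normalize $u_s(0)=-\epsilon$, the first integral (\ref{eq:first inegral 0}) determines $u(0)$, and then the second-order ODE with these finite initial data has a unique solution by standard theory. No analysis at $s=-\infty$ is needed.

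Your secondary concern is well-founded and is a genuine gap in both arguments. The discriminant of $P(v)=\frac{\rho}{4}v^{2}+\frac{2+\rho}{3}v+1$ is $\frac{(\rho-1)(\rho-4)}{9}$, so for $\rho\in(1,2)$ the polynomial $P$ has no real root and the paper's $x_1$ is complex; the integrand in the separated equation then has no zero in $(-2/\rho,0)$, and your observation that the orbit is driven toward the singular value $v=-2/\rho$ (where $u_s+\tfrac{\rho}{2}u_s^2=0$ and the cone condition degenerates) becomes the relevant issue. The paper's Case~2 argument, as written, only covers $\rho\in(0,1]$; the range $\rho\in(1,2)$ requires the additional analysis you flagged but did not carry out.
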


\begin{proof}
To fix the gauge, we may assume that $u_{s}|_{s=0}=-\epsilon$ for
some $\epsilon>0$ after a rescaling in (\ref{eq:shift a const}),
and consider
\begin{equation}
3u_{ss}\left(u_{s}+\frac{\rho}{2}\left(u_{s}\right)^{2}\right)-\frac{3}{2}e^{4(u+s)}=0,\ u_{s}(-\infty)=0,\ u_{s}|_{s=0}=-\epsilon.\label{eq:u_s ode}
\end{equation}
By (\ref{eq:first inegral 0}) and (\ref{eq:u_s ode})
\[
(u_{s})^{2}\left(\frac{\rho}{4}u_{s}^{2}+\frac{2+\rho}{3}u_{s}+1\right)=\frac{1}{2}u_{ss}\left(u_{s}+\frac{\rho}{2}\left(u_{s}\right)^{2}\right).
\]
Then
\begin{equation}
\int_{-\epsilon}^{u_{s}}\frac{(1+\frac{\rho}{2}x)}{2x\left(\frac{\rho}{4}x^{2}+\frac{2+\rho}{3}x+1\right)}dx=\int_{0}^{s}ds.\label{eq:u_s integral}
\end{equation}

Case 1. If $\rho=0$, then 
\[
\int_{-\epsilon}^{u_{s}}\frac{1}{2x\left(\frac{2}{3}x+1\right)}dx=\int_{0}^{s}ds.
\]
Thus, 
\[
\frac{1}{2}\ln\left(\left|\frac{u_{s}}{3+2u_{s}}\right|\right)-\frac{1}{2}\ln\left(\frac{\epsilon}{3-2\epsilon}\right)=s.
\]
We obtain that 
\[
u_{s}=-\frac{\frac{3}{2}e^{2s}}{e^{2s}+\frac{1}{2}\left(\frac{3-2\epsilon}{\epsilon}\right)},\ u=-\frac{3}{4}\ln\left(e^{2s}+\frac{1}{2}\left(\frac{3-2\epsilon}{\epsilon}\right)\right).
\]
Since $u\sim-\frac{3}{2}\ln r$ as $r\to\infty$, the total integral
$\int_{\mathbb{R}^{4}}e^{4u}<\infty$. 

Case 2. Suppose $\rho\in(0,2)$. Now, $x\left(\frac{\rho}{4}x^{2}+\frac{2+\rho}{3}x+1\right)=0$
has 3 roots 
\[
x_{0}=0,\ x_{1}=-\frac{\left(\rho+2\right)-\sqrt{\rho^{2}-5\rho+4}}{3\rho/4},\ x_{2}=-\frac{\left(\rho+2\right)+\sqrt{\rho^{2}-5\rho+4}}{3\rho/4}.
\]
Simple calculation shows $-1>x_{1}>-\frac{2}{\rho}$. Thus, $\frac{(1+\frac{\rho}{2}x)}{2x\left(\frac{\rho}{4}x^{2}+\frac{2+\rho}{3}x+1\right)}<0$
for $x\in(x_{1},0)$, and $u_{s}$ defined by (\ref{eq:u_s integral})
is decreasing in $s$. From (\ref{eq:u_s integral}), $u_{s}\to x_{0}=0$,
as $s\to-\infty$, and $u_{s}\to x_{1}$, as $s\to\infty$. From
the first integral (\ref{eq:first inegral 0}), the solution is uniquely
determined. Since $\lim_{s\to\infty}u_{s}=x_{1}<-1$, the total integral
\[
\int_{\mathbb{R}^{4}}e^{4u(|x|)}dx=|S^{3}|\int_{-\infty}^{\infty}e^{4u(e^{s})}e^{4s}ds<\infty.
\]
\end{proof}
\begin{rem}
The existence of radial solutions to (\ref{eq:PDE system}) for general
$f(u)=e^{4u}p(u)$ cannot be achieved through the previous argument.
It will be an interesting question to find such solutions and see
if the polynomial term $p(u)$ affects the asymptotic behavior. We
leave the question for future studies.
\end{rem}

\appendix

\section{Pohozaev identity}

In this appendix, we prove a Pohozaev identity, which is stated for
a slightly more general form of PDEs. For $k$-Hessian equation, Tso
in \cite{tso1990remarks} and Brandolini-Nitsch-Salani-Trombetti \cite{brandolini2008serrin}
have given the Pohozaev identity by the variational method of Pucci-Serrin
\cite{Pucci1986AGV}. Schoen in \cite{schoen1988existence} mentioned
that Kazdan-Warner identity is equivalent to Pohozaev identity. Deducing
Pohozaev identity from Kazdan-Warner identity is not obvious and Gover-$\textrm{Ø}$rsted
in \cite{gover2013universal} provided a proof for this fact. For
$\sigma_{k}$-Yamabe equation, the Kazdan-Warner identity was established
by Han \cite{Han06} and Viaclovsky \cite{viaclovsky2000some} respectively.
We will adopt the argument of \cite{Pucci1986AGV} to deduce the Pohozaev
identity for the more general form.

For a fixed constant $\tau$, consider the following Dirichlet problem
on a bounded open domain $\Omega$ : 
\begin{equation}
\begin{cases}
\sigma_{2}(A(\rho,u))=K(x)f(u), & \Omega\subset\mathbb{R}^{4},\\
A(\rho,u)\in\Gamma_{2}^{+}, & \ \\
u=\tau, & \partial\Omega.
\end{cases}\label{eq:K equation}
\end{equation}
We assume that $\Omega$ has finite perimeter so that we can use generalized
divergence theorem ( \cite{evans1991measure}, Section 5.8, Theorem
1). Note that $\Omega=\{u>\tau\}.$ For simplicity, we fix a function
$F(t)$ such that $F(\tau)=0$ and $F'(t)=f(t).$ We define a vector
field $X=\sum x^{i}\frac{\partial}{\partial x^{i}}.$ We first prove
that
\begin{lem}
\label{lem:P1}Let $u\in C^{2}(\overline{\Omega})$ be a solution
to (\ref{eq:K equation}) and use notations as above. Then the following
identity holds
\end{lem}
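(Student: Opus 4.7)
My approach follows the classical Pucci-Serrin variational strategy, adapted to the non-variational $\sigma_2$ operator by exploiting its divergence structure (\ref{eq:divergence structure}). Concretely, I will multiply both sides of (\ref{eq:K equation}) by the test function $\langle X,\nabla u\rangle = x^k u_k$, integrate over $\Omega$, and use integration by parts on both sides to convert bulk integrals into boundary integrals on $\partial\Omega$, where the Dirichlet condition $u=\tau$ and the normalization $F(\tau)=0$ will simplify things substantially.

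\textbf{Right-hand side.} The cleaner side is the right-hand side. Since $f(u)\langle X,\nabla u\rangle = \langle X,\nabla F(u)\rangle$, a single integration by parts gives
\[
\int_\Omega K(x)f(u)\langle X,\nabla u\rangle\, dx = -\int_\Omega F(u)\bigl[\langle X,\nabla K\rangle + 4K\bigr]\, dx + \int_{\partial\Omega} KF(u)\langle X,\nu\rangle\, dl,
\]
and the boundary term vanishes because $F(\tau)=0$ and $u|_{\partial\Omega}=\tau$. Here the $4K$ comes from $\operatorname{div} X = 4$ in $\mathbb{R}^4$.

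\textbf{Left-hand side.} Write $\sigma_2(A(\rho,u)) = -\tfrac{1}{2}\partial_j W_j$ with
\[
W_j := \bigl(-\Delta u\,\delta_{ij} + u_{ij} - \rho|\nabla u|^2\delta_{ij}\bigr)u_i,
\]
by (\ref{eq:divergence structure}). Integration by parts yields
\[
\int_\Omega \sigma_2(A(\rho,u))\,x^k u_k\, dx = -\tfrac{1}{2}\int_{\partial\Omega} W_j\nu_j\,\langle X,\nabla u\rangle\, dl + \tfrac{1}{2}\int_\Omega W_j(u_j + x^k u_{jk})\, dx.
\]
The bulk piece $\int W_j u_j\, dx = \int\bigl(u_{ij}u_iu_j - \Delta u\,|\nabla u|^2 - \rho|\nabla u|^4\bigr)\, dx$ and the piece $\int W_j x^k u_{jk}\, dx$ will each be massaged by a further integration by parts; the key algebraic point to verify is that the two resulting bulk terms recombine (using the definition of $W_j$ once more, and the identity $\partial_k|\nabla u|^2 = 2u_{k\ell}u_\ell$) into a multiple of $\int \sigma_2(A(\rho,u))\,dx$ plus pure boundary contributions. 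This closes the identity in divergence form.

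\textbf{Boundary simplification.} On $\partial\Omega$, the Dirichlet condition gives $\nabla u = -|\nabla u|\,\nu$. Working in the adapted frame used in (\ref{eq:expression of A}) with $y^4$ the signed distance to $\partial\Omega$, the tangential second derivatives of $u$ reduce to $h_{\alpha\beta}|\nabla u|$, so the boundary integrand of the left-hand side will collapse to an expression involving $|\nabla u|$, the mean curvature $H$ of $\partial\Omega$, $\rho$, and $\langle X,\nu\rangle$. Equating both sides then produces the Pohozaev identity in a form that, after specializing to $K\equiv 1$ and $\partial\Omega = L_t$, recovers the boundary expression stated in Corollary \ref{cor:Pohozaev for K=00003D1} (with coefficients $-\tfrac{3}{4}\rho$ and $\tfrac{2}{3}H$ in front of $|\nabla u|^4\langle x,\nu\rangle$ and $|\nabla u|^3\langle x,\nu\rangle$ respectively).

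\textbf{Main obstacle.} The principal difficulty is the bulk bookkeeping in Step for the left-hand side: one must carefully track the terms generated by integrating $W_j x^k u_{jk}$ by parts in $\partial_j$ versus $\partial_k$, and verify that the interior integrand collapses to a scalar multiple of $\sigma_2(A(\rho,u))$ so that one obtains a closed identity rather than a residual third-order bulk expression. A secondary technical point is to organize the boundary integrand so that it manifestly depends only on $|\nabla u|$, $H$, $\rho$, and $\langle X,\nu\rangle$, which requires using the Dirichlet condition to eliminate the mixed normal-tangential second derivatives $u_{4\alpha}$.
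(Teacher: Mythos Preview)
You have overshot the target. Lemma~\ref{lem:P1} asserts only the identity
\[
\int_{\Omega}X(u)\,\sigma_{2}(A(\rho,u))\,dx=-\int_{\Omega}\bigl(4K(x)+X(K(x))\bigr)F(u)\,dx,
\]
and your paragraph labeled ``Right-hand side'' is already a complete and correct proof of this, matching the paper's argument essentially line for line: replace $\sigma_{2}(A(\rho,u))$ by $K(x)f(u)$ using the PDE, write $K f(u)\langle X,\nabla u\rangle = \operatorname{div}(KF(u)X) - (4K+X(K))F(u)$, and kill the boundary term via $F(u)|_{\partial\Omega}=F(\tau)=0$. No use of the divergence structure (\ref{eq:divergence structure}) of $\sigma_2$ is needed here at all.

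Everything else in your proposal---the ``Left-hand side'' manipulation of $W_j$, the boundary frame computation with $H$, and the final matching to the coefficients $-\tfrac{3}{4}\rho$ and $\tfrac{2}{3}$---is the content of the \emph{subsequent} Lemmas~\ref{lem:P2}, \ref{lem:P3}, \ref{lem:P4-boundary smooth} and Theorem~\ref{thm:pohozaev}, not of Lemma~\ref{lem:P1}. The paper deliberately separates the two halves: Lemma~\ref{lem:P1} handles only the $K(x)f(u)$ side, while the $\sigma_2$ side is treated afterward using the Newton tensors $T^1,T^2$. So your plan is not wrong, but for this particular lemma you should stop after the first integration by parts.
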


\[
\int_{\Omega}X(u)\cdot\sigma_{2}(A(\rho,u))dx=-\int_{\Omega}(4K(x)+X(K(x)))F(u)dx.
\]

\begin{proof}
Note that, since $F'=f$ and (\ref{eq:K equation}),
\begin{align}
{\rm div}(F(u(x))K(x)X) & =({\rm div}X)F(u(x))K(x)+X(F(u(x))K(x))\nonumber \\
 & =({\rm div}X)F(u(x))K(x)+f(u)X(u(x))K(x)+F(u)X(K(x)).\label{01}
\end{align}
We integrate (\ref{01}). Using the divergence theorem and the fact
that $F(u)|_{\partial\Omega}=0$, we have proved the lemma. 
\end{proof}
Let 
\[
T_{ij}^{1}:=\left(-\Delta u\delta_{ij}+u_{ij}\right),T_{ij}^{2}:=u_{ik}T_{kj}^{1}+\sigma_{2}(-\nabla^{2}u)\delta_{ij}
\]
be the Newton tensors of $\sigma_{2}(-\nabla^{2}u)$ and $\sigma_{3}(-\nabla^{2}u)$.
It is well known that $-T_{ij}^{1}u_{ij}=2\sigma_{2}(-\nabla^{2}u).$
Also, it is easy to verify that if $u$ is in $C^{3}(\Omega)$, then
\begin{equation}
\frac{\partial}{\partial x^{j}}T_{ij}^{1}=0,\ \frac{\partial}{\partial x^{j}}T_{ij}^{2}=0.\label{eq:divergence free}
\end{equation}

We claim the following: 
\begin{lem}
\label{lem:P2}Let $u\in C^{3}(\Omega)\cap C^{2}(\overline{\Omega})$.
Then, 
\begin{align*}
-2\int_{\Omega}\langle x,\nabla u\rangle\sigma_{2}(A(\rho,u))dx\\
=-\frac{2}{3}\int_{\partial\Omega} & \left(-x^{i}u_{i}T_{kj}^{1}u_{j}+u_{j}T_{jk}^{1}(u-\tau)+x^{i}T_{ik}^{2}(u-\tau)\right)\nu_{k}dl.
\end{align*}
\end{lem}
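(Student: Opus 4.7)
The plan is to apply a Pucci--Serrin type divergence argument. From the divergence identity (\ref{eq:divergence structure}) we may write $2\sigma_2(A(\rho,u)) = -\partial_j[\tilde T^1_{ij}u_i]$ with $\tilde T^1_{ij} := T^1_{ij} - \rho|\nabla u|^2\delta_{ij}$, so that the left-hand side becomes $\int_\Omega x^ku_k\,\partial_j[\tilde T^1_{ij}u_i]\,dx$. A first integration by parts in $j$ yields the boundary integral $\int_{\partial\Omega} x^k u_k \tilde T^1_{ij}u_i\,\nu_j\,dl$ and two bulk terms, $-\int u_j \tilde T^1_{ij}u_i\,dx$ and $-\int x^k u_{kj}\tilde T^1_{ij}u_i\,dx$; here the divergence-freeness $\partial_j T^1_{ij}=0$ from (\ref{eq:divergence free}) is used to drop the term where the derivative falls on $T^1$.

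The key reduction is to apply the Newton recursion $u_{kj}T^1_{ij} = T^2_{ki}-\sigma_2(\nabla^2 u)\delta_{ki}$, which turns the second bulk term (after extracting the $\rho$-contribution by direct integration by parts) into $-\int x^k T^2_{ki}u_i\,dx + \int \sigma_2(\nabla^2 u)\langle x,\nabla u\rangle\,dx$. The second piece is a self-referential copy of the original integral; transferring it to the left, and handling the $\rho$-contributions coming from the $-\rho|\nabla u|^2\delta_{ij}$ piece of $\tilde T^1$ analogously by two successive integrations by parts, one collects the original integral with an overall factor of $3$ and divides. The remaining bulk terms $-\int u_j T^1_{ij}u_i\,dx$ and $-\int x^kT^2_{ki}u_i\,dx$ are then integrated by parts once more, writing $u_i=\partial_i(u-\tau)$, which produces the boundary contributions $-\int_{\partial\Omega}u_j T^1_{ij}(u-\tau)\nu_i\,dl$ and $-\int_{\partial\Omega}x^k T^2_{ki}(u-\tau)\nu_i\,dl$ of the stated right-hand side (after relabeling indices and using $T^1_{jk}=T^1_{kj}$), together with bulk residues $-2\int\sigma_2(\nabla^2 u)(u-\tau)\,dx$ (from $u_{ij}T^1_{ij}=-2\sigma_2$) and $+\int T^2_{kk}(u-\tau)\,dx = +2\int \sigma_2(\nabla^2 u)(u-\tau)\,dx$ (from the Newton trace $\operatorname{tr}(T^2)=(n-2)\sigma_2$, which equals $2\sigma_2$ when $n=4$). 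These bulk residues cancel exactly, and what remains is the boundary formula of the lemma.

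The chief delicacy is the bookkeeping of the $\rho$-dependent terms generated by replacing $T^1$ with $\tilde T^1$ in the divergence form: they contribute to both the bulk and the boundary and must merge cleanly with the Hessian pieces so that only the advertised $T^1,T^2$ boundary integral survives. The argument depends essentially on the divergence-freeness identities (\ref{eq:divergence free}), which are used repeatedly and require the hypothesis $u\in C^3(\Omega)$, together with the four-dimensional coincidence $\operatorname{tr}(T^2)=2\sigma_2$ that drives the cancellation of the $\int\sigma_2(u-\tau)\,dx$ bulk contributions.
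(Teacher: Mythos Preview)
Your outline for the pure Hessian contribution is correct and is exactly the paper's computation: integrate by parts once, apply the Newton recursion $u_{kj}T^1_{ij}=T^2_{ki}-\sigma_2(-\nabla^2u)\delta_{ki}$ to produce a self-referential copy, move it to the left to get the factor $3$, then integrate the two remaining bulk terms by parts against $u_i=\partial_i(u-\tau)$ and use the four-dimensional trace $T^2_{kk}=2\sigma_2$ to cancel the $(u-\tau)$ residues. This is precisely (\ref{eq:intermediate -1})--(\ref{eq:intermediate 2}) in the paper.

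The genuine gap is your handling of the $\rho$-piece. You assert that the contributions from $-\rho|\nabla u|^2\delta_{ij}$ in $\tilde T^1$ ``merge cleanly with the Hessian pieces so that only the advertised $T^1,T^2$ boundary integral survives.'' This cannot hold: the left side depends on $\rho$ through $A(\rho,u)$ while the right side does not, and a direct check (e.g.\ $u=-|x|^2/2$ on $B_1$, $\rho=1$) shows the two sides differ by $-\tfrac{3\rho}{4}\int_{\partial\Omega}|\nabla u|^4\langle x,\nu\rangle\,dl$. The $\rho$-integral $\int_\Omega\langle x,\nabla u\rangle\,\partial_i(|\nabla u|^2u_i)\,dx$ is not self-referential; the paper treats it separately by the pointwise divergence identity (\ref{eq:p-laplacian}), obtaining exactly that extra boundary term (equation (\ref{eq:4laplacian pohozaev})). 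What the paper's argument actually establishes in (\ref{eq:intermediate 2}) is the stated identity with $\sigma_2(-\nabla^2u)$ in place of $\sigma_2(A(\rho,u))$; the $\rho$-term is then added back in Lemma~\ref{lem:P3}. So your proposed cancellation of the $\rho$-contributions fails, and what your argument really proves---like the paper's---is the $\rho=0$ version of the identity.
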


\begin{proof}
Similar computations have been carried out for the 2-Hessian equations
and the $\sigma_{2}$-Yamabe equation in conformal geometry, which
we follow closely. 

We claim that 
\begin{equation}
\int_{\Omega}\langle x,\nabla u\rangle\frac{\partial}{\partial x^{i}}\left(|\nabla u|^{2}u_{i}\right)=\frac{3}{4}\int_{\partial\Omega}|\nabla u|^{4}\langle x,\nu\rangle dl,\label{eq:4laplacian pohozaev}
\end{equation}
and 
\begin{equation}
\int_{\Omega}\langle x,\nabla u\rangle\sigma_{2}(\nabla^{2}u)dx=\frac{1}{3}\int_{\partial\Omega}x^{i}u_{i}H|\nabla u|^{2}dl.\label{eq:sigma hessian pohozaev}
\end{equation}
Since $2\sigma_{2}(A(\rho,u))=2\sigma_{2}(-\nabla^{2}u)+\rho\frac{\partial}{\partial x^{i}}(|\nabla u|^{2}u_{i})$,
(\ref{eq:k-z formula}) follows immediately. 

To prove (\ref{eq:4laplacian pohozaev}), we compute directly. 
\begin{align}
x^{j}u_{j}\frac{\partial\left(|\nabla u|^{2}u_{i}\right)}{\partial x^{i}} & =\frac{\partial\left(x^{j}u_{j}|\nabla u|^{2}u_{i}\right)}{\partial x^{i}}-\left(u_{i}+x^{j}u_{ji}\right)|\nabla u|^{2}u_{i}\label{eq:p-laplacian}\\
 & =\frac{\partial\left(x^{j}u_{j}|\nabla u|^{2}u_{i}\right)}{\partial x^{i}}-\left(|\nabla u|^{4}+\frac{1}{4}x^{j}\frac{\partial}{\partial x^{j}}|\nabla u|^{4}\right)\nonumber \\
 & =\frac{\partial\left(x^{j}u_{j}|\nabla u|^{2}u_{i}\right)}{\partial x^{i}}-\left(|\nabla u|^{4}+\frac{1}{4}\frac{\partial\left(x^{j}|\nabla u|^{4}\right)}{\partial x^{j}}-|\nabla u|^{4}\right)\nonumber \\
 & =\frac{\partial}{\partial x^{i}}\left(x^{j}u_{j}|\nabla u|^{2}u_{i}-\frac{1}{4}x^{i}|\nabla u|^{4}\right).\nonumber 
\end{align}
By (\ref{eq:p-laplacian}), (\ref{eq:nu def}), and the divergence
theorem,
\[
\int_{\Omega}x^{j}u_{j}\frac{\partial\left(|\nabla u|^{2}u_{i}\right)}{\partial x^{i}}dx=\frac{3}{4}\int_{\partial\Omega}|\nabla u|^{4}x^{i}\nu_{i}dl.
\]
Thus, we have proved the first claim. 

Next, we prove (\ref{eq:sigma hessian pohozaev}). Recall in (\ref{eq:divergence free})
that tensors $T^{1}$ and $T^{2}$ are divergence free. Also, $-T_{ij}^{1}u_{ij}=2\sigma_{2}(-\nabla^{2}u)$.
From (\ref{eq:divergence free}), $2\sigma_{2}(-\nabla^{2}u)=-\frac{\partial}{\partial x^{i}}\left(T_{ij}^{1}u_{j}\right)$.
We have 
\begin{equation}
2\int_{\Omega}x^{i}u_{i}\sigma_{2}(-\nabla^{2}u)dx=-\int_{\Omega}\frac{\partial}{\partial x^{j}}\left(x^{i}u_{i}T_{jk}^{1}u_{k}\right)dx+\int_{\Omega}\frac{\partial}{\partial x^{j}}\left(x^{i}u_{i}\right)T_{jk}^{1}u_{k}dx.\label{eq:intermediate -1}
\end{equation}
The following holds in $\Omega$:
\begin{align}
\frac{\partial\left(x^{i}u_{i}\right)}{\partial x^{j}}T_{jk}^{1}u_{k}dx & =u_{j}T_{jk}^{1}u_{k}+\left(x^{i}u_{ij}\right)T_{jk}^{1}u_{k}\label{eq:intermediate0}\\
 & =\frac{\partial}{\partial x^{k}}\left(u_{j}T_{jk}^{1}(u-\tau)\right)-u_{jk}T_{jk}^{1}(u-\tau)+x^{i}u_{ij}T_{jk}^{1}u_{k}\nonumber \\
 & =\frac{\partial}{\partial x^{k}}\left(u_{j}T_{jk}^{1}(u-\tau)\right)+2\sigma_{2}(-\nabla^{2}u)(u-\tau)+x^{i}u_{ij}T_{jk}^{1}u_{k};\nonumber 
\end{align}
 
\begin{align}
x^{i}u_{ij}T_{jk}^{1}u_{k} & =x^{i}\left(T_{ik}^{2}-\sigma_{2}(-\nabla^{2}u)\delta_{ik}\right)u_{k}\label{eq:intermediate}\\
 & =\frac{\partial}{\partial x^{k}}\left(x^{i}T_{ik}^{2}(u-\tau)\right)-\delta_{k}^{i}T_{ik}^{2}(u-\tau)-x^{k}u_{k}\sigma_{2}(-\nabla^{2}u)\nonumber \\
 & =\frac{\partial}{\partial x^{k}}\left(x^{i}T_{ik}^{2}(u-\tau)\right)-2\sigma_{2}(-\nabla^{2}u)(u-\tau)-x^{k}u_{k}\sigma_{2}(-\nabla^{2}u).\nonumber 
\end{align}
We have used the fact that $T_{ii}^{2}=2\sigma_{2}(-\nabla^{2}u)$
in the last equality. By (\ref{eq:intermediate -1}),(\ref{eq:intermediate0}),
(\ref{eq:intermediate}), and the divergence theorem, we have 
\begin{align}
3\int_{\Omega}x^{i}u_{i}\sigma_{2}(-\nabla^{2}u)dx & =\int_{\Omega}\left(\frac{\partial}{\partial x^{k}}\left(-x^{i}u_{i}T_{kj}^{1}u_{j}+u_{j}T_{jk}^{1}(u-\tau)+x^{i}T_{ik}^{2}(u-\tau)\right)\right)dx\label{eq:intermediate 2}\\
 & =\int_{\partial\Omega}\left(-x^{i}u_{i}T_{kj}^{1}u_{j}+u_{j}T_{jk}^{1}(u-\tau)+x^{i}T_{ik}^{2}(u-\tau)\right)\nu_{k}dl.\nonumber 
\end{align}
\end{proof}
\begin{lem}
\label{lem:P3}Assume that $\Omega$ has almost $C^{2}$ boundary,
i.e. the singular part of $\partial\Omega$ has zero $\mathcal{H}^{3}$-measure.
If $u\in C^{3}(\Omega)\cap C^{2}(\bar{\Omega})$ and $u|_{\partial\Omega}=\tau$,
then we have 
\begin{equation}
-2\int_{\Omega}\langle x,\nabla u\rangle\sigma_{2}(A(\rho,u))dx=\int_{\partial\Omega}\left(-\frac{3}{4}\rho|\nabla u|^{4}\langle x,\nu\rangle+\frac{2}{3}H|\nabla u|^{3}\langle x,\nu\rangle\right)dl.\label{eq:k-z formula}
\end{equation}
\end{lem}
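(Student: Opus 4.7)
The plan is to reduce the identity to the two boundary computations already contained in the proofs of Lemmas \ref{lem:P1} and \ref{lem:P2}, then simplify on $\partial\Omega$ using the Dirichlet condition. First, I would use the divergence decomposition (\ref{eq:divergence structure}), namely $2\sigma_{2}(A(\rho,u))=2\sigma_{2}(-\nabla^{2}u)+\rho\,\partial_{i}(|\nabla u|^{2}u_{i})$, to split
\[
-2\int_{\Omega}\langle x,\nabla u\rangle\sigma_{2}(A(\rho,u))\,dx = -2\int_{\Omega}\langle x,\nabla u\rangle\sigma_{2}(-\nabla^{2}u)\,dx - \rho\int_{\Omega}\langle x,\nabla u\rangle\,\partial_{i}(|\nabla u|^{2}u_{i})\,dx.
\]
Identity (\ref{eq:4laplacian pohozaev}) converts the $\rho$-term into $-\frac{3\rho}{4}\int_{\partial\Omega}|\nabla u|^{4}\langle x,\nu\rangle\,dl$. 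For the Hessian term I would invoke formula (\ref{eq:intermediate 2}) from the proof of Lemma \ref{lem:P2}: since $u-\tau\equiv 0$ on $\partial\Omega$, the two terms containing the factor $(u-\tau)$ in its boundary integrand vanish, leaving
\[
-2\int_{\Omega}\langle x,\nabla u\rangle\sigma_{2}(-\nabla^{2}u)\,dx = \frac{2}{3}\int_{\partial\Omega}x^{i}u_{i}\,T^{1}_{kj}u_{j}\,\nu_{k}\,dl.
\]

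The heart of the argument is the pointwise identity $x^{i}u_{i}\,T^{1}_{kj}u_{j}\,\nu_{k}=H|\nabla u|^{3}\langle x,\nu\rangle$ on the regular part of $\partial\Omega$. To establish this, I would work in the adapted normal coordinates $\{y^{i}\}$ of Section \ref{sec:Mass-monotonicity-formulae} with $\partial/\partial y^{4}=\nu$. Because $u\equiv\tau$ on $\partial\Omega$, the gradient is purely normal, so $u_{j}=-|\nabla u|\nu_{j}$, $u_{k}\nu_{k}=-|\nabla u|$, and $x^{i}u_{i}=-|\nabla u|\langle x,\nu\rangle$. Differentiating $|\nabla u|^{2}=\sum_{i}u_{i}^{2}$ in the normal direction and using $u_{i}=0$ for $i<4$ on $\partial\Omega$ yields $u_{kj}u_{j}\nu_{k}=\frac{1}{2}\nu(|\nabla u|^{2})=-|\nabla u|u_{44}$. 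Combined with $\Delta u=-H|\nabla u|+u_{44}$ from (\ref{eq:new100}), this gives $T^{1}_{kj}u_{j}\nu_{k}=|\nabla u|\Delta u - |\nabla u|u_{44}=-H|\nabla u|^{2}$, and the claimed pointwise identity follows. Adding the two boundary contributions then produces exactly the right-hand side of (\ref{eq:k-z formula}).

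The main subtlety is administrative rather than computational: since $\partial\Omega$ is only almost $C^{2}$, I must restrict all boundary integrals to the $C^{2}$ regular part, which by hypothesis coincides with $\partial\Omega$ up to an $\mathcal{H}^{3}$-null set, and apply the generalized divergence theorem of \cite{evans1991measure} in place of the classical one wherever Lemmas \ref{lem:P1} and \ref{lem:P2} are used, exactly as was done in Section \ref{sec:Mass-monotonicity-formulae}. The hypotheses $u\in C^{3}(\Omega)\cap C^{2}(\overline{\Omega})$ are precisely what make the interior applications of the divergence-free Newton tensors (\ref{eq:divergence free}) legitimate and the resulting boundary integrands well defined.
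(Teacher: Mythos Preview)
Your proposal is correct and follows essentially the same route as the paper: split via the divergence structure, apply (\ref{eq:4laplacian pohozaev}) and (\ref{eq:intermediate 2}) from the proof of Lemma~\ref{lem:P2}, drop the $(u-\tau)$ boundary terms, and reduce $T^{1}_{kj}u_{j}\nu_{k}$ to $-H|\nabla u|^{2}$ on the regular part of $\partial\Omega$. Your reference to Lemma~\ref{lem:P1} is a slight misattribution---both boundary identities you use actually live in the proof of Lemma~\ref{lem:P2}---but the substance is identical to the paper's argument.
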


\begin{proof}
As $\Omega$ has almost $C^{2}$ boundary, the regular part of $\partial\Omega$
is a $C^{2}$ hypersurface. The singular part has zero $\mathcal{H}^{3}$-measure.
As $u\in C^{2}(\overline{\Omega})$ and Lemma \ref{lem:P2}, the integral
on the singular part of $\partial\Omega$ is zero. Using $u|_{\partial\Omega}=\tau$,
and the fact that 
\[
u_{j}T_{jk}^{1}\nu_{k}|_{M}=\left(-u_{\nu}\Delta u+u_{\nu\nu}u_{\nu}\right)=H|\nabla u|u_{\nu}=\left(-H|\nabla u|^{2}\right)|_{M},
\]
we conclude from (\ref{eq:intermediate 2}) that 
\begin{align*}
\int_{\Omega}x^{i}u_{i}\sigma_{2}(-\nabla^{2}u)dx & =\frac{1}{3}\int_{\partial\Omega}x^{i}u_{i}T_{jk}^{1}u_{j}\nu_{k}dl\\
 & =\frac{1}{3}\int_{\partial\Omega}x^{i}u_{i}H|\nabla u|^{2}dl.
\end{align*}
\end{proof}

\begin{lem}
\label{lem:P4-boundary smooth}Assume that $\partial\Omega$ is smooth.
If $u\in C^{2}(\overline{\Omega})$ and $u|_{\partial\Omega}=\tau$,
then we have 
\begin{equation}
-2\int_{\Omega}\langle x,\nabla u\rangle\sigma_{2}(A(\rho,u))dx=\int_{\partial\Omega}\left(-\frac{3}{4}\rho|\nabla u|^{4}\langle x,\nu\rangle+\frac{2}{3}H|\nabla u|^{3}\langle x,\nu\rangle\right)dl.\label{eq:k-z formula-1}
\end{equation}
\end{lem}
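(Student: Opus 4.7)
The plan is to reduce Lemma \ref{lem:P4-boundary smooth} to the $C^{3}$-interior case of Lemma \ref{lem:P3} by a mollification argument that exploits the smoothness of $\partial\Omega$ to construct a uniform $C^{2}$-approximation of $u$ up to the boundary.

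First, split the integrand using the decomposition $\sigma_{2}(A(\rho,u)) = \sigma_{2}(-\nabla^{2}u) + \frac{\rho}{2}\partial_{i}(|\nabla u|^{2}u_{i})$. This reduces (\ref{eq:k-z formula-1}) to proving the two component identities (\ref{eq:4laplacian pohozaev}) and (\ref{eq:sigma hessian pohozaev}) for $u\in C^{2}(\overline\Omega)$ satisfying $u|_{\partial\Omega}=\tau$. The first identity is immediate: the pointwise divergence formula derived around (\ref{eq:p-laplacian}) in the proof of Lemma \ref{lem:P2} uses only two derivatives of $u$, so combining it with the divergence theorem on the smooth domain $\Omega$ together with $\nabla u = u_{\nu}\nu$ on $\partial\Omega$ yields (\ref{eq:4laplacian pohozaev}) with no further regularity requirement.

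The Hessian identity (\ref{eq:sigma hessian pohozaev}) is the genuine obstacle, since its derivation in Lemma \ref{lem:P2} invokes the divergence-free property of the Newton tensors $T^{1}, T^{2}$, which requires $u \in C^{3}$. The plan is to bypass this by approximation. Using the smoothness of $\partial\Omega$, extend $u$ to $\tilde u \in C^{2}_{c}(\mathbb{R}^{4})$ (via the Whitney extension theorem, or concretely by local boundary flattening followed by a higher-order Lions-type reflection). Mollify to obtain $u_{\epsilon} := \tilde u * \phi_{\epsilon} \in C^{\infty}(\mathbb{R}^{4})$; standard mollification estimates then give $u_{\epsilon} \to u$ in $C^{2}(\overline\Omega)$ as $\epsilon \to 0$. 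Apply the core divergence identity (\ref{eq:intermediate 2}) from the proof of Lemma \ref{lem:P2} to the smooth approximant $u_{\epsilon}$ on $\Omega$ and pass to the limit: uniform $C^{2}$-convergence implies uniform convergence of $T^{1}(u_{\epsilon})$ and $T^{2}(u_{\epsilon})$ on $\overline\Omega$, so the volume integral on the left converges to $3\int_{\Omega} \langle x,\nabla u\rangle \sigma_{2}(-\nabla^{2}u)\,dx$. On the right, the two boundary terms carrying the factor $(u_{\epsilon}-\tau)$ vanish in the limit because $u_{\epsilon}|_{\partial\Omega}\to\tau$ uniformly while the tensor norms remain bounded. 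The remaining boundary term simplifies via the boundary calculation $u_{j}T^{1}_{jk}\nu_{k} = H|\nabla u|u_{\nu}$, valid for any $C^{2}$ function with $u|_{\partial\Omega} = \tau$, producing (\ref{eq:sigma hessian pohozaev}) in the limit.

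Finally, combining the two limit identities with the coefficients $-2$ and $-\rho$ respectively, and substituting $x^{i}u_{i} = -|\nabla u|\langle x,\nu\rangle$ on $\partial\Omega$, produces exactly (\ref{eq:k-z formula-1}). The main technical point, and the only place where smoothness of $\partial\Omega$ is essential, is ensuring the existence of the $C^{2}$-extension of $u$ across $\partial\Omega$ so that mollification is meaningful up to the boundary; once that extension is in hand, the remainder is routine $C^{2}$-convergence bookkeeping.
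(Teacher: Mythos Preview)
Your proposal is correct and follows essentially the same approach as the paper: extend $u$ across the smooth boundary to a $C^{2}$ function on $\mathbb{R}^{4}$, approximate in $C^{2}(\overline{\Omega})$ by smoother functions, apply the $C^{3}$ identity (\ref{eq:intermediate 2}) from Lemma~\ref{lem:P2} to the approximants, and pass to the limit using that the $(u_{\epsilon}-\tau)$ boundary terms vanish. The only cosmetic difference is that you split off the $\rho$-term first and observe it already holds for $C^{2}$ functions, whereas the paper approximates both pieces simultaneously via Lemma~\ref{lem:P2}.
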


\begin{proof}
Since $\partial\Omega$ is smooth, we may extend $u$ to be a $C^{2}$
function on $\mathbb{R}^{4}$. Then, we can find a sequence of $C^{3}$
functions $u_{p}$ such that $||u_{p}-u||_{C^{2}(\overline{\Omega})}\rightarrow0$
for $p\rightarrow\infty$. From Lemma \ref{lem:P2}, we know that
\begin{align*}
2\int_{\Omega}\langle x,\nabla u_{p}\rangle\sigma_{2}(A(\rho,u_{p}))dx & =\\
\frac{2}{3}\int_{\partial\Omega}\nu_{k} & \left(-x^{i}u_{p,i}T_{kj}^{1}u_{p,j}+u_{p,j}T_{jk}^{1}(u_{p}-\tau)+x^{i}T_{ik}^{2}(u_{p}-\tau)\right)dl,
\end{align*}
where $T_{ij}^{1}=\left(-\Delta u_{p}\delta_{ij}+u_{p,ij}\right)$,
and $T_{ij}^{2}=u_{p,ik}T_{kj}^{1}+\sigma_{2}(-\nabla^{2}u_{p})\delta_{ij}$.

By $||u_{p}-u||_{C^{2}(\overline{\Omega})}\rightarrow0$ for $p\rightarrow\infty$
and $u=\tau|_{\partial\Omega}$, we have 
\[
-2\int_{\Omega}\langle x,\nabla u\rangle\sigma_{2}(A(\rho,u))dx=\frac{2}{3}\int_{\partial\Omega}x^{i}u_{i}T_{kj}^{1}u_{j}\nu_{k}dl.
\]
Similar to the argument in Lemma \ref{lem:P3}, we have proved this
Lemma.
\end{proof}
Combining Lemmas \ref{lem:P1} , \ref{lem:P3}, and \ref{lem:P4-boundary smooth},
we have the following Pohozaev type identity:
\begin{thm}
\label{thm:pohozaev}Suppose either of the following conditions holds.

$\text{1}.$ $\Omega$ has almost $C^{2}$ boundary, and $u$ is a
$C^{3}(\Omega)\cap C^{2}(\overline{\Omega})$ solution to (\ref{eq:K equation});

$2$. $\Omega$ has smooth boundary, and $u$ is a $C^{2}(\overline{\Omega})$
solution to (\ref{eq:K equation}).

Then, we have 

\begin{equation}
\int_{\Omega}8(K(x)+\frac{1}{4}\langle x,\nabla K\rangle)F(u)dx=\int_{\partial\Omega}\left(-\frac{3}{4}\rho|\nabla u|^{4}\langle x,\nu\rangle+\frac{2}{3}H|\nabla u|^{3}\langle x,\nu\rangle\right)dl,\label{eq:Pohozaev identity for general f}
\end{equation}
where $\nu$ is the unit outward normal vector of $\partial\Omega$. 
\end{thm}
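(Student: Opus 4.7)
The plan is to combine Lemma \ref{lem:P1} (which expresses the interior integral $\int_\Omega \langle x, \nabla u\rangle \sigma_2(A(\rho,u))\,dx$ purely in terms of $K$ and $F(u)$) with Lemma \ref{lem:P3} under case 1, or Lemma \ref{lem:P4-boundary smooth} under case 2 (each of which expresses the same interior integral as a boundary integral involving $H$ and $|\nabla u|$). Since the vector field $X = \sum x^i \partial_{x^i}$ satisfies $X(u) = \langle x, \nabla u\rangle$, equating the two expressions will immediately yield \eqref{eq:Pohozaev identity for general f}.

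Carrying out the plan, from Lemma \ref{lem:P1}, after multiplying by $-2$,
\begin{equation*}
-2\int_\Omega \langle x, \nabla u\rangle \sigma_2(A(\rho,u))\,dx = \int_\Omega \bigl(8K(x) + 2\langle x, \nabla K\rangle\bigr) F(u)\,dx.
\end{equation*}
Under case 1, Lemma \ref{lem:P3} applies directly; under case 2 the same identity is obtained through Lemma \ref{lem:P4-boundary smooth}. In either case,
\begin{equation*}
-2\int_\Omega \langle x, \nabla u\rangle \sigma_2(A(\rho,u))\,dx = \int_{\partial\Omega}\Bigl(-\tfrac{3}{4}\rho|\nabla u|^4 \langle x,\nu\rangle + \tfrac{2}{3} H|\nabla u|^3 \langle x,\nu\rangle\Bigr) dl.
\end{equation*}
Equating the two right-hand sides and observing that $8K + 2\langle x, \nabla K\rangle = 8\bigl(K + \tfrac{1}{4}\langle x, \nabla K\rangle\bigr)$ gives \eqref{eq:Pohozaev identity for general f} at once.

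The real work lies in the supporting lemmas rather than in this assembly. Lemma \ref{lem:P2} is the most delicate step: one introduces the Newton tensors $T^{1}, T^{2}$ of $\sigma_2(-\nabla^2 u)$ and $\sigma_3(-\nabla^2 u)$, exploits their divergence-free property, and integrates by parts repeatedly to rewrite $2\sigma_2(-\nabla^2 u)\langle x, \nabla u\rangle$ as an exact divergence, while the $\rho$-part of $\sigma_2(A(\rho,u))$ is handled separately via the identity for $\langle x, \nabla u\rangle\,\partial_i(|\nabla u|^2 u_i)$. The $C^3$ regularity in case 1 is used precisely here, to legitimately differentiate second derivatives of $u$. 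Case 2 is then recovered by smoothing: extend $u$ across $\partial\Omega$ (smoothness of $\partial\Omega$ allows this), mollify to obtain a sequence $u_p \to u$ in $C^2(\overline{\Omega})$, apply Lemma \ref{lem:P2} to each $u_p$, and pass to the limit — all boundary integrals appearing depend continuously on $u$ in the $C^2(\overline{\Omega})$ topology. Lemma \ref{lem:P3} finally uses the Dirichlet condition $u|_{\partial\Omega} = \tau$, which forces $\nabla u = -|\nabla u|\nu$ on $\partial\Omega$, to collapse the tensorial boundary integral into the clean $H|\nabla u|^3$ expression. Once these lemmas are in hand, the theorem itself is essentially one line of algebra.
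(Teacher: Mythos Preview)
Your proposal is correct and follows exactly the paper's own approach: the paper states the theorem as an immediate consequence of combining Lemma~\ref{lem:P1} with Lemma~\ref{lem:P3} (case~1) or Lemma~\ref{lem:P4-boundary smooth} (case~2), and your description of the supporting lemmas and the $C^3$-approximation step for case~2 matches the paper's treatment.
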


Theorem \ref{thm:pohozaev} can also be established using a classical
variational identity of Pucci-Serrin \cite{Pucci1986AGV}. We present
this approach for comparison.

We set up some notations first. Let 
\[
\mathcal{F}:\mathbb{R}^{n}\times\mathbb{R}\times\mathbb{R}^{n}\times Sym(\mathbb{R}^{n\times n})\to\mathbb{R}
\]
\[
(x,u,\mathbf{p},\mathbf{r})\mapsto\mathcal{F}(x,u,\mathbf{p},\mathbf{r})
\]
be a smooth function where $\mathbf{p}=(p_{1},\cdots,p_{n})\in\mathbb{R}^{n}$,
$\mathbf{r}=(r_{ij})_{n\times n}$. Consider the variational problem
on an open domain $\Omega^{n}\subset\mathbb{R}^{n}$
\begin{equation}
\delta\int_{\Omega^{n}}\mathcal{F}(x,u,\nabla u,\nabla^{2}u)dx=0.\label{eq:variational problem}
\end{equation}
Denote $\mathcal{F}_{u}=\frac{\partial\mathcal{F}}{\partial u}$,
$\mathcal{F}_{p_{i}}=\frac{\partial\mathcal{F}}{\partial p_{i}}$,
$\mathcal{F}_{r_{ij}}=\frac{\partial\mathcal{F}}{\partial r_{ij}}$,
and $\mathcal{F}_{x^{i}}=\frac{\partial\mathcal{F}}{\partial x^{i}}$.
The corresponding Euler-Lagrange equation for (\ref{eq:variational problem})
is 
\begin{equation}
\frac{\partial^{2}\mathcal{F}_{r_{ij}}}{\partial x^{i}\partial x^{j}}-\frac{\partial\mathcal{F}_{p_{i}}}{\partial x^{i}}+\mathcal{F}_{u}=0.\label{eq:euler-lag}
\end{equation}
Then, the following identity holds.
\begin{thm}
[\cite{Pucci1986AGV}, Proposition 3]\label{thm:generalized form-PS}Let
$u\in C^{4}(\Omega)$ be a solution of (\ref{eq:euler-lag}). Then
\begin{equation}
\ensuremath{\begin{aligned}\frac{\partial}{\partial x^{i}}\bigg[x^{i}\mathcal{F}- & x^{k}\frac{\partial u}{\partial x^{k}}\big(\mathcal{F}_{p_{i}}-\frac{\partial}{\partial x_{j}}\mathcal{F}_{r_{ij}}\big)-\mathcal{F}_{r_{ij}}\frac{\partial}{\partial x^{j}}\big(x^{k}\frac{\partial u}{\partial x^{k}}\big)\bigg]\\
 & =n\mathcal{F}+x^{i}\mathcal{F}_{x^{i}}-\frac{\partial u}{\partial x^{i}}\mathcal{F}_{p_{i}}-2\frac{\partial^{2}u}{\partial x^{i}\partial x^{j}}\mathcal{F}_{r_{ij}}.
\end{aligned}
}\label{eq:PS-identity}
\end{equation}
\end{thm}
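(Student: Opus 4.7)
The plan is to derive (\ref{eq:PS-identity}) as a pointwise algebraic identity: no integration is required, the Euler--Lagrange equation (\ref{eq:euler-lag}) together with the classical $C^{4}$-regularity of $u$ suffice, and the whole argument reduces to the chain rule plus repeated Leibniz bookkeeping with the scaling generator $x^{k}u_{k}$.

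First I would contract the total-derivative identity
\[
\partial_{k}\mathcal{F} \;=\; \mathcal{F}_{x^{k}} + \mathcal{F}_{u}u_{k} + \mathcal{F}_{p_{i}}u_{ik} + \mathcal{F}_{r_{ij}}u_{ijk}
\]
with $x^{k}$ and use $x^{k}\partial_{k}\mathcal{F} = \partial_{i}(x^{i}\mathcal{F}) - n\mathcal{F}$ to extract
\[
x^{k}u_{k}\mathcal{F}_{u} \;=\; \partial_{i}(x^{i}\mathcal{F}) - n\mathcal{F} - x^{k}\mathcal{F}_{x^{k}} - x^{k}u_{ik}\mathcal{F}_{p_{i}} - x^{k}u_{ijk}\mathcal{F}_{r_{ij}}.
\]
This immediately supplies the divergence piece $\partial_{i}(x^{i}\mathcal{F})$ and the bulk terms $n\mathcal{F}$ and $x^{i}\mathcal{F}_{x^{i}}$ appearing in (\ref{eq:PS-identity}). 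I would then substitute $\mathcal{F}_{u} = \partial_{i}\mathcal{F}_{p_{i}} - \partial_{i}\partial_{j}\mathcal{F}_{r_{ij}}$ from the Euler--Lagrange equation and process each piece by Leibniz. For the first-order part,
\[
x^{k}u_{k}\,\partial_{i}\mathcal{F}_{p_{i}} \;=\; \partial_{i}\bigl(x^{k}u_{k}\mathcal{F}_{p_{i}}\bigr) - u_{i}\mathcal{F}_{p_{i}} - x^{k}u_{ik}\mathcal{F}_{p_{i}},
\]
so that the $x^{k}u_{ik}\mathcal{F}_{p_{i}}$ term cancels against its counterpart from the chain-rule step and $-u_{i}\mathcal{F}_{p_{i}}$ appears as the third bulk term on the right.

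The main obstacle --- and the only genuinely combinatorial step --- is the second-order piece $x^{k}u_{k}\,\partial_{i}\partial_{j}\mathcal{F}_{r_{ij}}$. Applying Leibniz twice gives
\[
x^{k}u_{k}\,\partial_{i}\partial_{j}\mathcal{F}_{r_{ij}} \;=\; \partial_{i}\bigl(x^{k}u_{k}\,\partial_{j}\mathcal{F}_{r_{ij}}\bigr) - \partial_{j}\bigl(\partial_{i}(x^{k}u_{k})\,\mathcal{F}_{r_{ij}}\bigr) + \partial_{i}\partial_{j}(x^{k}u_{k})\,\mathcal{F}_{r_{ij}}.
\]
Here the symmetry $\mathcal{F}_{r_{ij}} = \mathcal{F}_{r_{ji}}$ lets me relabel $i \leftrightarrow j$ in the middle divergence so it reads $-\partial_{i}\bigl(\mathcal{F}_{r_{ij}}\,\partial_{j}(x^{k}u_{k})\bigr)$, which is exactly the last entry inside the bracket of (\ref{eq:PS-identity}). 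Expanding $\partial_{i}\partial_{j}(x^{k}u_{k}) = 2u_{ij} + x^{k}u_{ijk}$ (using commutativity of partials), the term $x^{k}u_{ijk}\mathcal{F}_{r_{ij}}$ cancels exactly against its chain-rule counterpart, while $2u_{ij}\mathcal{F}_{r_{ij}}$ supplies the fourth bulk term on the right of (\ref{eq:PS-identity}). Collecting the three divergences reproduces the bracketed vector field on the left-hand side. The delicacy is entirely in tracking indices, signs, and the symmetrization; once these cancellations are performed, (\ref{eq:PS-identity}) drops out as a tautology modulo (\ref{eq:euler-lag}).
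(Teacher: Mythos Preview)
Your argument is correct: the chain-rule expansion of $x^{k}\partial_{k}\mathcal{F}$, substitution of the Euler--Lagrange equation, and the two Leibniz manipulations with the symmetry $\mathcal{F}_{r_{ij}}=\mathcal{F}_{r_{ji}}$ assemble exactly into (\ref{eq:PS-identity}), and the cancellations you describe all check out. Note however that the paper does not give its own proof of this statement at all --- it is quoted verbatim as Proposition~3 of Pucci--Serrin \cite{Pucci1986AGV} and then applied to the specific functional (\ref{eq:our functional mathcal F}); so your write-up is not a comparison target but rather a self-contained verification of the cited result, which is precisely the computation Pucci--Serrin perform.
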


For (\ref{eq:K equation}), we pick the following
\begin{equation}
\mathcal{F}(x,u,\nabla u,\nabla^{2}u)=-2\frac{(u-\tau)\sigma_{2}(\nabla^{2}u)}{3}+\frac{\rho}{4}|\nabla u|^{4}+2K(x)F(u).\label{eq:our functional mathcal F}
\end{equation}
The partial derivatives of $\mathcal{F}$ are given by
\begin{equation}
\mathcal{F}_{u}=-\frac{2\sigma_{2}(\nabla^{2}u)}{3}+2K(x)f(u),\ \mathcal{F}_{x^{i}}=2\langle x,\nabla K\rangle F(u),\label{eq:partial of functional F}
\end{equation}
\begin{equation}
\mathcal{F}_{p_{i}}=\rho|\nabla u|^{2}u_{i},\quad\mathcal{F}_{r_{ij}}=-\frac{2}{3}(u-\tau)\frac{\partial\sigma_{2}}{\partial r_{ij}}=-\frac{2}{3}(u-\tau)\left(\delta_{ij}\Delta u-u_{ij}\right).\label{eq:partial derivative of functional F}
\end{equation}

\begin{rem}
\label{rem:u in C^3}Notice that (\ref{eq:divergence free}) holds
if we assume $u\in C^{3}(\Omega)$. Thus,
\begin{align*}
\frac{\partial}{\partial x^{j}}\mathcal{F}_{r_{ij}} & =-\frac{2}{3}u_{j}\left(\delta_{ij}\Delta u-u_{ij}\right)
\end{align*}
which only needs derivatives up to 2nd order. Similarly, $\frac{\partial^{2}}{\partial x^{j}\partial x^{i}}\mathcal{F}_{r_{ij}}$
in (\ref{eq:euler-lag}) just needs derivatives up to order 2. Hence,
for our functional (\ref{eq:our functional mathcal F}), Theorem \ref{thm:generalized form-PS}
holds for $u\in C^{3}(\Omega)\cap C^{2}(\bar{\Omega})$. 
\end{rem}

The Euler-Lagrangian equation (\ref{eq:euler-lag} ) in $\Omega$
is 
\[
\sigma_{2}(A(\rho,u))-K(x)f(u)=0.
\]
See \cite{brendle2004variational} the variational function for $\sigma_{\frac{n}{2}}(A(1,u))$
on closed manifolds.

Now, we present the proof of Theorem \ref{thm:pohozaev} using (\ref{eq:PS-identity}).
\begin{proof}
[Another proof of Theorem \ref{thm:pohozaev}]We first assume that
$u$ is a solution in $C^{3}(\Omega)\cap C^{2}(\bar{\Omega})$ on
a domain $\Omega$ with almost $C^{2}$ boundary. By (\ref{eq:partial of functional F})
and (\ref{eq:partial derivative of functional F}) ,

\[
4\mathcal{F}+x^{i}\mathcal{F}_{x^{i}}-\frac{\partial u}{\partial x^{i}}\mathcal{F}_{p_{i}}-2\frac{\partial^{2}u}{\partial x^{i}\partial x^{j}}\mathcal{F}_{r_{ij}}=8F(u)K(x)+2\langle x,\nabla K\rangle F(u).
\]
By Remark \ref{rem:u in C^3}, we can apply (\ref{eq:PS-identity}).
Integrate both sides of (\ref{eq:PS-identity}) to get
\begin{align}
\int_{\partial\Omega}\left(\mathcal{F}\langle x,\nu\rangle-\langle x,\nabla u\rangle\left(\mathcal{F}_{p_{i}}-\frac{\partial}{\partial x^{j}}\mathcal{F}_{r_{ij}}\right)\nu_{i}-\frac{\partial\langle x,\nabla u\rangle}{\partial x^{j}}\mathcal{F}_{r_{ij}}\nu_{i}\right)dl\label{eq:PS1}\\
=\int_{\Omega}8(K(x)+\frac{1}{4}\langle x,\nabla K\rangle)F(u)dx.\nonumber 
\end{align}
We simplify the left hand side of (\ref{eq:PS1}) by facts that $u|_{\partial\Omega}\equiv\tau$,
$\mathcal{F}|_{\partial\Omega}=\frac{\rho}{4}|\nabla u|^{4}$, $\nu=-\frac{\nabla u}{|\nabla u|}$,
and (\ref{eq:new100}) to get

\begin{align*}
\langle\mathcal{F}_{p_{i}}-\frac{\partial}{\partial x^{j}}\mathcal{F}_{r_{ij}},\nu_{i}\rangle & =\langle\frac{\rho}{4}4|\nabla u|^{2}u_{i}+\frac{2}{3}u_{j}\left(\delta_{ij}\Delta u-u_{ij}\right),\nu_{i}\rangle\\
 & =-\rho|\nabla u|^{3}+\frac{2}{3}H|\nabla u|^{2}.
\end{align*}
We have thus proved our theorem for $u\in C^{3}(\Omega)\cap C^{2}(\bar{\Omega}).$

If the boundary of $\Omega$ is smooth, we may first extend $u$ to
be a $C^{2}$ function on $\mathbb{R}^{4}$. We can find a sequence
of $C^{3}$ functions $u_{p}$ such that $||u_{p}-u||_{C^{2}(\overline{\Omega})}\rightarrow0$.
By $C^{2}$ convergence, (\ref{eq:Pohozaev identity for general f})
holds for $u$. We finish the proof.
\end{proof}
\bibliographystyle{plain}
\addcontentsline{toc}{section}{\refname}\bibliography{BibforLiouville}

\end{document}